\documentclass[12pt]{amsart}
\usepackage{amssymb}
\usepackage{tikz}
\newtheorem{theorem}{Theorem}[section]
\newtheorem{lemma}[theorem]{Lemma}
\newtheorem{corollary}[theorem]{Corollary}

\newtheorem{proposition}[theorem]{Proposition}
\newtheorem{remark}[theorem]{Remark}
\newtheorem{definition}[theorem]{Definition}

\newtheorem{example}[theorem]{Example}
\newcommand{\ncom}{\newcommand}
\ncom{\lrar}{\longrightarrow}
\ncom{\rar}{\rightarrow}
\ncom{\ov}{\overline}
\ncom{\m}{\mbox}
\ncom{\sta}{\stackrel}
\ncom{\comx}{{\mathbb C}}
\ncom{\A}{{\mathbb A}}
\ncom{\Z}{{\mathbb Z}}
\ncom{\Q}{{\mathbb Q}}
\ncom{\R}{{\mathbb R}}
\ncom{\al}{\alpha}
\ncom{\p}{{\mathbb P}}
\ncom{\N}{{\mathbb N}}
\ncom{\f}{\frac}
\ncom{\cD}{{\mathcal D}}
\ncom{\cO}{{\mathcal O}}
\ncom{\cP}{{\mathcal P}}
\ncom{\cV}{{\mathcal V}}
\ncom{\cF}{{\mathcal F}}
\ncom{\what}{\widehat}
\ncom{\delbar}{\overline{\partial}}
\ncom{\eop}{{\hfill $\Box$}}

\ncom{\Wone}{W^{(1)}}
\ncom{\Wtwo}{W^{(2)}}
\ncom{\Lex}{\mathrm{Lex}}
\ncom{\Grr}{\mathrm{Gr}}

\begin{document}
\baselineskip=16pt
\author[J.\,N.\,Iyer]{Jaya NN Iyer}
\address{Institute for Mathematical Sciences\\ Chennai, India}
\email{jniyer@imsc.res.in}
\address{Max-PLanck Institute for Mathematics, Bonn, Germany}
\email{jniyer@mpim-bonn.mpg.de}
\author[C.\,Simpson]{Carlos Simpson}
\address{CNRS, Laboratoire J.\,A.\,Dieudonn\'{e}, UMR 6621\\
	Universit\'{e}  C\"ote d'Azur,\\
	06108 Nice, Cedex 2, France}
\email{carlos.simpson@univ-cotedazur.fr}
\title[Regulators of canonical extensions: two intersecting divisors]
{Regulators of canonical extensions are torsion:\\ the case of two transversally intersecting smooth divisors}

\begin{abstract}
This note extends the main result of \cite{IS} --- torsion of the extended
Chern--Simons (regulator) classes of the Deligne canonical extension of a flat
bundle with unipotent monodromy at infinity --- from the case of a smooth
irreducible boundary divisor to the case of a boundary divisor
$D = D_1\cup D_2$ with two smooth irreducible components meeting transversally
along a smooth center $Z=D_1\cap D_2$. Let $X$ be a smooth projective variety  defined over $\mathbb{C}$, and $U:=X-D$. Given a flat bundle $(E,\nabla)$ on $U$ with unipotent monodromy around the components of $D$ consider Deligne's canonical extension $(\ov{E},\bar{\nabla})$ on $X$. Then the extended Chern-Simons classes
$$
c_p(\ov{E},\ov{\nabla})\in H^{2p-1}(X,\mathbb{C}/\mathbb{Z})
$$
are torsion, for $p\geq 2$. These notes were prepared in 2009-2010, and the preprint \cite[2026]{IS2} treats the full normal crossing case via a different approach.
\end{abstract}
\maketitle

\setcounter{tocdepth}{1}
\tableofcontents

\section{Introduction and statement of the result}

Throughout, $X$ is a smooth quasi--projective variety over $\comx$ and
$$
D \;=\; D_1\cup D_2 \;\subset\; X
$$
is a divisor with exactly two irreducible components, each smooth, meeting
transversally along the smooth (not necessarily connected) center
$$
Z \;:=\; D_1\cap D_2 .
$$
Put $U:= X-D$. Let $\gamma_1,\gamma_2$ denote loops in $U$ going once around
$D_1$, resp.\ $D_2$ (based suitably; near a point of $Z$ both loops are
available and commute). Let
$$
\rho : \pi_1(U)\lrar GL_r(\comx)
$$
be a representation which is \emph{unipotent at infinity}: $\rho(\gamma_1)$
and $\rho(\gamma_2)$ are unipotent. Let $(E,\nabla)$ be the associated flat
bundle on $U$, $L = E^{\nabla}$ the local system, and $(\ov E,\ov\nabla)$ the
Deligne canonical extension \cite{De} across $D$, a vector bundle on $X$ with
logarithmic connection having nilpotent residues along $D_1$ and $D_2$.

The purpose of this note is to extend the constructions and the main theorem
of \cite{IS} to this situation:

\begin{theorem}
\label{mainthm}
In the above situation there are canonically defined extended regulator
classes
$$
\what{c}_p(\rho / X)\,\in\, H^{2p-1}(X,\comx/\Z), \qquad p\geq 1,
$$
restricting to the usual Chern--Simons classes of $(E,\nabla)$ on $U$,
additive in $\rho$, contravariantly functorial in $(X,D)$, and rigid under
deformations of $\rho$ which stay unipotent at infinity. For $p\geq 2$ these
classes are torsion. If moreover $X$ is projective, the classes
$\what{c}_p(\rho/X)$ lift the Deligne Chern classes
$c_p^{\cD}(\ov E)\in H^{2p}_{\cD}(X,\Z(p))$, which are therefore torsion for
$p\geq 2$.
\end{theorem}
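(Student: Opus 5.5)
We follow the strategy of \cite{IS}: first construct the classes $\what{c}_p(\rho/X)$ through a universal-family argument, and then obtain torsion from rigidity combined with a reduction to a situation where the monodromy is already understood.

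\emph{Construction.} Locally near a point of $Z$ the monodromies $\rho(\gamma_1),\rho(\gamma_2)$ are commuting unipotent matrices, so we write $\rho(\gamma_i)=\exp(N_i)$ with commuting nilpotent $N_i$. We form the (real) oriented blow-up $\pi:\tilde X\to X$ along $D_1$ and $D_2$. This is a manifold with corners whose corner stratum is a $T^2$-bundle over $Z$, and it retracts onto $U$. Hence $\rho$ defines a class $c_p(\rho)\in H^{2p-1}(\tilde X,\comx/\Z)$.

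To descend this class to $X$, we compare the cohomology of $\tilde X$ with that of $X$ using the long exact sequences of the pairs, stratified by $D_i-Z$ and $Z$. The obstruction to descending lives in cohomology supported on the boundary. It is computed by restricting the flat bundle to the circle bundles over $D_i-Z$ and to the torus bundle over $Z$. On each of these pieces the unipotent monodromy can be deformed to the trivial one via $t\mapsto\exp(tN_i)$, and since the $N_i$ commute, both can be deformed simultaneously on the $T^2$ stratum. By the rigidity of Chern--Simons classes of flat bundles, the restricted classes therefore agree with those of a trivial bundle pulled back from the base. This gives a canonical lift $\what c_p(\rho/X)$. Canonicity is checked by making the same deformations in families, using the affine contractible space of pairs of commuting nilpotent matrices in a fixed flag.

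Additivity and functoriality then follow from naturality of the construction. Rigidity follows from the fact that $\what c_p$ is defined through a universal class on the representation variety of pairs (group, commuting unipotent boundary data), where Chern--Simons classes are locally constant.

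\emph{Torsion for $p\ge2$.} By rigidity, $\what c_p$ is constant on connected components of the variety of representations that are unipotent at infinity. As in \cite{IS}, we deform $\rho$ inside this variety, allowing passage to a finite étale cover of $U$ and Galois conjugation, to a representation defined over a number field with integral traces. Alternatively, we use the $\comx^*$-action/Hodge-theoretic argument to reach a variation of Hodge structure. For a complex VHS with unipotent monodromy, Reznikov's argument extended in \cite{IS} shows that the imaginary parts vanish, and that the classes lie in the image of torsion after Galois conjugation. Here the needed input is that the $\comx^*$-limit of $(\ov E,\ov\nabla)$ remains a logarithmic Higgs bundle with nilpotent residues along $D_1,D_2$. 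This uses the commuting residues along $Z$ and Mochizuki/Simpson's theory for normal crossings.

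Finally, when $X$ is projective, the Deligne class $c_p^{\cD}(\ov E)$ is the image of $\what c_p$ under $H^{2p-1}(X,\comx/\Z)\to H^{2p}_{\cD}(X,\Z(p))$. This follows because the extended class, computed by Chern--Weil forms of the logarithmic connection with nilpotent residues, has vanishing $F^p$-part, the residue contributions being nilpotent. So torsion of $\what c_p$ gives torsion of $c_p^{\cD}(\ov E)$.

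\emph{Main obstacle.} The hardest step is the descent across the codimension-two stratum $Z$. We must show that the boundary obstruction vanishes compatibly on the $T^2$-bundle and on the two adjacent circle bundles, so that the lifts chosen over $D_1-Z$ and $D_2-Z$ glue. We would try first a Mayer--Vietoris argument on a tubular neighbourhood of $Z$, using the simultaneous contraction $t\mapsto(\exp tN_1,\exp tN_2)$, which reduces the question to $Z\times T^2$ with trivial monodromy.
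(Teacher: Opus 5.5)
Your proposal has genuine gaps, starting with the construction.

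\textbf{The construction.} A descent argument at the level of cohomology cannot produce a \emph{canonical} class. Even when the obstruction in $H^{2p}_D(X,\comx/\Z)$ vanishes, the lifts of $c_p(\rho)\in H^{2p-1}(U,\comx/\Z)$ form a torsor under the image of $H^{2p-1}_D(X,\comx/\Z)$, which is nonzero in general. Even after you prescribe the class on a tubular neighbourhood, the lift is only determined up to the image of the Mayer--Vietoris connecting map from $H^{2p-2}$ of the punctured neighbourhood. Doing the boundary deformations ``in families'' does not remove this ambiguity. Since $\ov E$ never enters your construction, nothing singles out the lift that maps to $c_p^{\cD}(\ov E)$; your remark about the $F^p$--part does not identify a specific lift.

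The deformation itself is also not the right one. Replacing only $\rho(\gamma_i)$ by $\exp(tN_i)$ is incompatible with the relations $r=\gamma_i^{e}$ in $\pi_1$ of a circle or torus bundle with nonzero Euler class. Moreover, the endpoint relevant for $\ov E$ is not a trivial bundle. Along $D_1$ it is the graded representation $\Grr^{\Wone}(\rho)$. Near $Z$ it is $\Grr\Grr(\rho)$, a generally nontrivial local system pulled back from $Z$, living on a $T^2$--bundle $S_{12}$ that need not be a product.

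What is needed is a construction at the level of connections or classifying maps. The paper keeps the pair $(\Wone,\Wtwo)$ at the corner, uses a simultaneous splitting and the conjugation deformation $\m{Ad}(\psi^1_{t_1}\psi^2_{t_2})$, and builds an honest connection $\nabla^{\#}$ (or $\nabla^{\rm def}$) on $\ov E$. Its curvature is pointwise strictly triangular for the total degree filtration (Theorem \ref{biconstruction}, Lemma \ref{triangulardef}). This is exactly what resolves your ``main obstacle''. On the two--parameter collar, products of the $dt_1$-- and $dt_2$--components of the curvature survive in $\Omega\wedge\Omega$, so a codimension count or Mayer--Vietoris argument fails; only triangularity kills all Chern forms. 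The Cheeger--Simons character is then a definite element of $H^{2p-1}(X,\comx/\Z)$, independent of choices by convexity (Lemma \ref{biinv}). It lifts the Deligne class by Dupont--Hain--Zucker (Proposition \ref{bideligne}).

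\textbf{The torsion step.} This step is missing its engine. Vanishing of the imaginary parts $Vol_{2p-1}(\rho^{\sigma}/X)$ says nothing about the real parts unless you know that $\what c_p(\rho/X)$ is pulled back from $H^{2p-1}(BSL(F),\comx/\Z)$. The paper obtains this from the cubical classifying map $\xi_{\rho}:X\rar BSL(F)^+_{{\rm def},2}$. Its target has the cohomology of $BSL(F)$ by homotopy invariance of $K$--theory for $F[t_1,t_2]$ (Lemma \ref{cubicaldefisom}), and Corollary \ref{bipatchingsame} identifies the patched class with the pullback. Borel's theorem (the volume classes for all embeddings $\sigma$ generate $H^{\ast}(BSL(F),\R)$) then shows that $\xi_{\rho}$ kills rational homology, which gives torsion.

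Reduction to a number field and deformation to a VHS are not alternatives. You need the first to apply Borel, and the second, for every conjugate $\rho^{\sigma}$, to kill the volume regulators.

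Finally, the VHS vanishing at the corner is not covered by the smooth--divisor argument. You need a self--dual simultaneous splitting of $(W(N_1),W(N_2))$ (Proposition \ref{CKSsplitting}), so that the two--parameter deformation lies in $O_{p,q}(\comx[t_1,t_2])$ (Proposition \ref{hermtriple2}). Only then do Karoubi's homotopy invariance and Reznikov's vanishing apply on $BO_{\infty,\infty}(\comx)^+_{{\rm def},2}$.
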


The two difficulties pointed out in
\cite{IS} for the normal--crossings case (failure of the common--refinement
Condition 4.6 of \cite{IS} at the corner, and the failure of the curvature
argument of \cite[Remark 6.9]{IS} for a two--parameter deformation collar
$S\times [0,1]^2$) are resolved here as follows. First, the patching formalism
is generalized to a \emph{bifiltered} patching formalism: at the corner one
keeps the \emph{pair} of commuting monodromy filtrations instead of trying to
refine one into the other, using the elementary fact that \emph{two}
filtrations of a vector bundle always admit a simultaneous (bi-graded)
splitting. Second, the comparison with $K$-theory is carried out using a
\emph{cubical} homotopy pushout whose corner carries the two--variable
deformation space $BGL(F[t_1,t_2])^+$; the two--parameter deformation is a
conjugation (``triangular'') deformation, so the curvature of the resulting
deformed connection is strictly triangular for the corner bifiltration, and
all Chern forms vanish identically --- no codimension argument is needed.
The hermitian counterpart, with $O_{p,q}(\comx[t_1,t_2])$ and Karoubi's
homotopy invariance applied twice, together with an elementary self--dual
splitting of the pair of weight filtrations (for which the several--variable
$SL_2$--orbit theorems of Cattani--Kaplan--Schmid provide refined
representatives), yields
vanishing of the extended volume regulators for complex variations of Hodge
structure, and then Mochizuki's theorem and Reznikov's argument give torsion.

\subsection{}\label{obstacles} Two specific obstructions to the normal--crossings case were recorded in
\cite{IS}.

\textbf{(A) Failure of the common refinement condition.}
The patching formalism of \cite[\S 4]{IS} requires
(\cite[Condition 4.6]{IS}) that on the overlap of two patching neighborhoods
the two filtrations admit a \emph{common refinement}. Near a corner point of
$Z$ the natural filtrations are the kernel (or monodromy weight) filtrations
$\Wone$ of $N_1=\log\rho(\gamma_1)$ and $\Wtwo$ of $N_2=\log\rho(\gamma_2)$.
Two filtrations of a vector space admit a common refinement if and only if
their members are pairwise comparable under inclusion (\cite[Lemma 4.4]{IS}),
and this essentially never holds for $\Wone,\Wtwo$.

The remedy is not to refine but to keep the pair. The relevant linear--algebra
fact is:
\emph{any two filtrations of a vector space (or, locally, of a
$\mathcal{C}^\infty$ vector bundle) admit a simultaneous splitting, i.e.\ a
bigrading $V=\bigoplus_{a,b}V_{a,b}$ with
$\Wone_a=\bigoplus_{a'\leq a}V_{a',\bullet}$ and
$\Wtwo_b=\bigoplus_{b'\leq b}V_{\bullet,b'}$} (Deligne's two--filtration
lemma; for three or more filtrations this fails, which is precisely the
combinatorial shadow of triple points of a normal--crossings divisor).
We therefore generalize the notion of patching collection so that the corner
neighborhood carries the \emph{bifiltration} $(\Wone,\Wtwo)$ together with a
flat connection on the double associated--graded
$\Grr^{\Wtwo}\Grr^{\Wone}(E)$, and we require the patched connection to
preserve both filtrations there. The curvature of such a connection strictly
decreases the \emph{total degree} filtration
$F_c=\sum_{a+b\leq c}\Wone_a\cap\Wtwo_b$, so all Chern forms vanish
identically and the Cheeger--Simons differential character lands in
$H^{2p-1}(X,\comx/\Z)$ exactly as in \cite[\S 4.1]{IS}. This is carried out in
\S\S \ref{bifiltered}--\ref{construction}.

\textbf{(B) Failure of the codimension argument on the corner collar.}
In \cite[Remark 6.9]{IS} it is observed that on a two--parameter collar
$S\times[0,1]^2$ a connection which is flat on each slice
$S\times\{(t_1,t_2)\}$ no longer satisfies $\Omega\wedge\Omega=0$: the
curvature may contain $dt_1\wedge dt_2$--terms, so the vanishing of the higher
Chern forms cannot be deduced from a codimension count.

The remedy is that for the specific deformation we use --- conjugation by the
one--parameter subgroups attached to a simultaneous bigrading of
$(\Wone,\Wtwo)$ --- the family of representations over $[0,1]^2$ preserves a
\emph{fixed} flag pair, and its block--diagonal part is \emph{constant} in
$(t_1,t_2)$. Hence the deformed connection $\nabla^{\rm def}$ on
$S\times[0,1]^2$ preserves the (constant) bifiltration and induces a constant
flat connection on the double graded; its curvature, $dt_i$--terms included,
is strictly triangular, and \emph{all} Chern forms vanish identically. The
codimension argument is thereby replaced by a triangularity argument, valid
for every $p\geq 1$. This is carried out in \S \ref{kdef}, where the
relevant universal space is the cubical homotopy pushout built from
$BGL(F[t_1,t_2])^+$.


Both new ingredients of this note are specific to corners of multiplicity
two, and both fail at a triple point $D_1\cap D_2\cap D_3$:
\begin{itemize}
\item[(i)] three filtrations of a vector space do \emph{not} in general admit
a simultaneous splitting; the construction of the corner model connection
$\nabla_{123}$ preserving $W^{(1)},W^{(2)},W^{(3)}$ with flat triple graded
breaks down, and with it the pointwise triangularity of the patched
connection on overlaps of corner patches of different multiplicities.
Mochizuki's constancy of the weight filtrations $W(\sum a_iN_i)$, $a_i>0$,
\cite{Mochizuki} suggests using the single filtration $W(N_1+\cdots+N_k)$ on
the deepest stratum, but the required compatibilities with the lower--depth
filtrations on overlaps are exactly the open problem already noted in
\cite[\S 4.4]{IS};
\item[(ii)] even granting the splittings, the triple
deformation $\m{Ad}(\psi^1_{t_1}\psi^2_{t_2}\psi^3_{t_3})(\rho)$ remains
triangular, so in fact Lemma \ref{triangulardef} would survive; the genuinely
open point in higher multiplicity is (i), i.e.\ the simultaneous splitting
and the Jordan--H\"older string argument for tuples of filtrations needed for
Proposition \ref{biindepfilt}. We record this as the precise remaining
obstruction for the general normal--crossings case.
\end{itemize}
For a normal crossings divisor whose components are pairwise disjoint or, more
generally, such that at most two branches pass through any point, the
arguments of this note apply without change.

\textit{The general normal--crossings case: overview of \cite{IS2}}
\label{generalNCsummary}

 The article \cite{IS2} treats an arbitrary normal--crossings divisor
$D=\bigcup_i D_i$. We summarise it here to place the present note in
context and because the two constructions are compared in detail in
\S \ref{reescomparison}.

Given a local system on $X-D$ with unipotent monodromy, defined over a
number field $K$, \cite{IS2} proceeds in five steps. (1) A \emph{good
adapted covering} $X=\bigcup_I U_I$, indexed by multi--indices $I$ of
divisor components, is constructed so that non--empty multiple intersections
are linearly ordered by inclusion of index sets and distinct index classes
do not cross --- the combinatorial replacement for our single triple
$(V_1,V_2,V_{12})$. (2) The canonical extension is equipped with a strict
collection of patching data $(W(I),\tau(I))$, built from the monodromy
weight filtrations $W(N_I)$, $N_I=\sum_{i\in I}N_i$, whose \emph{sequential}
compatibility along chains $I\subset J$ --- as opposed to the simultaneous
splitting of Definition \ref{bipatchdef}(II), which is exactly what fails at
a triple point --- is supplied by Mochizuki's theorem \cite{Mochizuki} on
weight filtrations of tame harmonic bundles. (3) An iterated multi--Rees
construction builds an algebraic vector bundle $F_K$ on the projective
cubical realization of the nerve of the covering, and the extended
regulator is identified with the Deligne--Beilinson Chern class of $F_K$ by
two independent comparisons: a purely $K$--theoretic one, resting
on homotopy invariance of $BGL(F[t_1,\dots,t_n])^+$ and requiring no
connection at all, and an analytic one, via the Dupont--Hain--Zucker
independence theorem and the arithmetic Chern character of Burgos--Gil
\cite{BurgosGil}, identifying the Deligne class with a Chern--Simons class.
 As here, vanishing of the volume regulator for a representation
underlying a variation of Hodge structure --- now via contractibility of a
poset of $N$--isotropic filtrations, proved with Quillen's Theorem A ---
combines with Borel's theorem \cite{Borel} and Reznikov's argument
\cite{Re2} to give torsion in degree $\geq 2$.

\subsection{Outline} \S \ref{geometry} fixes the cubical geometry of
neighborhoods. \S \ref{bifiltered} develops bifiltered patching collections
and the vanishing/invariance lemmas. \S \ref{construction} constructs the
patched connection for $(\rho,\Wone,\Wtwo)$ and proves well--definedness,
independence of the filtrations, additivity, functoriality and rigidity.
\S \ref{deligne} proves compatibility with the Deligne Chern class for $X$
projective. \S \ref{kdef} constructs the cubical deformation model
$BGL(F)^+_{{\rm def},2}$ using $BGL(F[t_1,t_2])^+$ and identifies the
resulting regulator classes with the patched ones. \S \ref{hermitian}
develops the hermitian counterpart over $\comx[t_1,t_2]$ and proves the
vanishing of the extended volume regulators for variations of Hodge
structure, using a self--dual splitting of the pair of weight filtrations
(Proposition \ref{CKSsplitting}, an elementary statement for which the
several--variable $SL_2$--orbit theorem of Cattani--Kaplan--Schmid
\cite{CKS} supplies distinguished representatives). \S \ref{proofmain} assembles the proof of
Theorem \ref{mainthm}. 

Throughout we freely use the notation, conventions
($\Z(p)$, differential characters $\what{H^k}(X,\comx/\Z)$, the exact
sequence (2.2) of \cite{IS}, the variational formula
\cite[Prop.\ 2.9]{Ch-Si}) and the results of \cite{IS}, which we refer to as
``the smooth--divisor case''.

\textbf{Acknowledgements} These notes were prepared in 2009-2010, extending \cite{IS}. The preprint \cite{IS2} treats the full normal crossing divisor case through a different route. The first named author thanks Max-Planck Institute for Mathematics, Bonn for their hospitality and support. She was partly supported by the IMSc-DAE Apex grant "Complex Algebraic Geometry". The second author was partly supported by the ERC Horizon Synergy grant 101167526 (MALINCA), the Horizon 2020 grant 670624 (Mai Gehrke's DuaLL project), and the ANR program IsoMoDyn (ANR-25-CE40-1360).

\textit{AI disclosure:} Proof-checking of the article used Claude AI.

\section{The cubical geometry of neighborhoods}
\label{geometry}

\subsection{Compatible tubular data}
Choose hermitian metrics on the normal bundles and compatible tubular
neighborhood structures so that near $Z$ the two tubular projections commute
(this is the standard ``corner'' structure for a pair of transversally
intersecting submanifolds; it exists by the usual tubular neighborhood
theorem applied first to $Z$ and then relatively). Concretely we fix:

\begin{itemize}
\item a \emph{polydisc bundle neighborhood} $B_{12}$ of $Z$: a locally trivial
fiber bundle $B_{12}\rar Z$ with fiber the bidisc $\Delta\times\Delta$, such
that $D_1\cap B_{12} = \{z_2$--coordinate $=0\}$ and
$D_2\cap B_{12}=\{z_1$--coordinate $=0\}$ in fiber coordinates
(so $D_i\cap B_{12}$ is a disc bundle over $Z$);
\item disjoint closed neighborhoods $K_1, K_2$ of $Z$ inside $B_{12}$,
$K_2\subset {\rm int}(K_1)$, both polydisc bundles of smaller polyradius;
\item tubular (disc bundle) neighborhoods $B_1$ of $D_1- {\rm int}(K_2)$ and
$B_2$ of $D_2-{\rm int}(K_2)$, chosen thin enough that
\begin{equation}\label{disjointness}
B_1\cap D_2=\emptyset,\qquad B_2\cap D_1 = \emptyset, \qquad
B_1\cap B_2\subset B_{12}.
\end{equation}
After shrinking $B_1,B_2$ further (replacing $D_i-{\rm int}(K_2)$ by
$D_i-{\rm int}(K_1)$ in the role of the core) we may and do also assume
\begin{equation}\label{disjointness2}
B_1\cap B_2 = \emptyset .
\end{equation}
\end{itemize}
Set $V_0:=U$, $V_1:=B_1$, $V_2:=B_2$, $V_{12}:=B_{12}$; this is an open cover
of $X$. Denote by a star the punctured pieces:
$V_i^{\ast}:=V_i - D$, $V_{12}^{\ast}:=V_{12}-D$.

\subsection{Fundamental groups at the boundary}
$V_{12}^{\ast}\rar Z$ is a $(\Delta^{\ast}\times\Delta^{\ast})$--bundle.
Hence there is an exact sequence
$$
\Z^2 \;=\;\langle \gamma_1,\gamma_2\rangle \lrar \pi_1(V_{12}^{\ast})
\lrar \pi_1(Z)\lrar 1 ,
$$
and since the structure group of each disc factor is the circle, the
conjugation action of $\pi_1(V_{12}^{\ast})$ on the image of $\Z^2$ is
trivial: \emph{the classes of $\gamma_1$ and $\gamma_2$ are central in
$\pi_1(V_{12}^{\ast})$, and they commute with each other.} Similarly
$\gamma_i$ is central in $\pi_1(V_i^{\ast})$. The maps
$\pi_1(V^{\ast})\rar\pi_1(V)$ are surjective for
$V=V_1,V_2,V_{12}$, with kernel normally generated by the relevant
$\gamma_i$'s (both of them for $V_{12}$).

Consequently, with $N_i:=\log\rho(\gamma_i)$ (well defined near the relevant
boundary stratum):
\begin{equation}\label{centrality}
\m{$\rho(\pi_1(V_i^{\ast}))$ commutes with $N_i$;}\quad
\m{$\rho(\pi_1(V_{12}^{\ast}))$ commutes with both $N_1$ and $N_2$;}\quad
[N_1,N_2]=0 .
\end{equation}

\subsection{The corner coordinates and the cubical decomposition}
\label{cubicaldecomp}
Let $r_i: X\rar [0,1]$ be smooth functions with $r_i^{-1}(0)=D_i$, equal to
the normalized tubular radius of $D_i$ on a neighborhood of $D_i$, and
$\equiv 1$ outside it; near $Z$ use the two polydisc radii. The map
$$
q := (r_1,r_2) : X \lrar [0,1]^2
$$
exhibits $X$ as glued from pieces indexed by the faces of the square
$\square=[0,1]^2$. Fix $0<\epsilon<\epsilon '<1$ and set
$$
Y_{11}:= q^{-1}([\epsilon ',1]^2), \quad
Y_{01}:= q^{-1}([0,\epsilon]\times[\epsilon ',1]),\quad
Y_{10}:= q^{-1}([\epsilon ',1]\times[0,\epsilon]),\quad
Y_{00}:= q^{-1}([0,\epsilon]^2),
$$
the four \emph{vertex blocks} (deep interior; collar of $D_1$ away from $Z$;
collar of $D_2$ away from $Z$; corner block around $Z$), and the
\emph{edge collars} and the \emph{corner collar}
$$
C \;\cong\; S_{12}\times [0,1]^2,
\qquad S_{12} := q^{-1}(\{(\epsilon ,\epsilon )\}) ,
$$
where $S_{12}$ is the $T^2$--bundle over $Z$ of ``corner directions''. (The
edge collars are of the form $S_1'\times[0,1]$ etc.\ where $S_1'$ is the part
of the circle bundle $\partial B_1$ lying over $D_1$ away from $Z$, and
similar pieces over the corner block.) The conclusion we use is purely
homotopy--theoretic:

\begin{lemma}\label{cubicalpushout}
$X$ is the homotopy colimit of the cubical diagram, indexed by the poset of
faces of the square $\square$, which assigns: to the four vertices the blocks
$Y_{11}\simeq U$, $Y_{01}\simeq D_1-Z$ (homotopy type of the open part of
$D_1$), $Y_{10}\simeq D_2-Z$, $Y_{00}\simeq Z$; to the four edges the
corresponding cylinders (homotopy types $S_1^{\circ}$, $S_2^{\circ}$,
$\partial$--pieces of the corner block); and to the $2$--face the corner
collar $C\simeq S_{12}$. All maps are the inclusions of boundary pieces.
\eop
\end{lemma}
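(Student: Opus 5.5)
The idea is to build the decomposition from the map $q=(r_1,r_2):X\to[0,1]^2$. First I would take a decomposition of the square $\square$ itself into closed subsets indexed by its faces: four corner subsquares (the vertex regions), four edge strips, and one central region. Then I would pull these pieces back along $q$. The preimages of the vertex regions are, by definition, $Y_{11},Y_{01},Y_{10},Y_{00}$. The preimages of the strips joining them are the edge collars. The preimage of the central region is the corner collar. In the statement the vertex regions are the intervals $[0,\epsilon]$ and $[\epsilon',1]$; after reparametrizing the interval $[\epsilon,\epsilon']$ to a standard $[0,1]$, the central region becomes $[0,1]^2$.

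The key geometric input is that $q$ is a submersion over the open set $(0,1)^2\cap q(\text{tubes})$, with product structure. In the polydisc bundle $B_{12}$ the functions $r_1,r_2$ are the two fiber radii. Hence, over any rectangle $[a,b]\times[c,d]$ with $0<a$ and $0<c$, the preimage is a product with that rectangle, with fiber $S_{12}$, a $T^2$-bundle over $Z$. This gives $C\cong S_{12}\times[0,1]^2$. The same product structure identifies each edge collar with a cylinder $S'\times[0,1]$.

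Next I would determine the homotopy type of each vertex block by radial deformation retraction.
\begin{itemize}
\item $Y_{00}=q^{-1}([0,\epsilon]^2)$ is a closed polydisc bundle over $Z$. Shrinking both radii retracts it onto $Z$.
\item $Y_{01}$ is a closed disc bundle over the part of $D_1$ where $r_2\ge\epsilon'$. Retracting in $r_1$ shows it is homotopy equivalent to a compact manifold with boundary whose interior is $D_1-Z$ up to homotopy, so $Y_{01}\simeq D_1-Z$. The block $Y_{10}$ is handled symmetrically.
\item $Y_{11}$ is the complement of an open collar of $D$. Pushing out along the radial flows shows it is a deformation retract of $U$.
\end{itemize}
The edge pieces and $C$ have the stated homotopy types because they are cylinders over $S_i^\circ$, over the boundary pieces of the corner block, and over $S_{12}$, respectively.

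Finally, the homotopy colimit statement reduces to a general fact. If a space is covered by closed subspaces indexed by the face poset, and every inclusion of an intersection into a piece is a cofibration (here a collared boundary inclusion of manifolds with corners), then the space is the colimit of the diagram. The cofibrancy, i.e.\ Reedy-cofibrancy of the diagram, makes this colimit a homotopy colimit. To set this up I would write $X$ as the union of the nine closed pieces, glued along their common faces $q^{-1}(\{\epsilon\}\times\cdots)$ and so on, and verify that these faces are boundary collars.

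The main obstacle is arranging that $q$ is a genuine product submersion near the relevant levels everywhere, including where the tube of $D_1$ away from $Z$ meets the polydisc neighborhood $B_{12}$. This requires the radius functions $r_1,r_2$ to be compatible, in the sense that the commuting tubular projections fixed in \S\ref{geometry} hold, and $\epsilon,\epsilon'$ to be chosen small enough. I would handle it by choosing $\epsilon'$ below the radius where the tubes are modified, so that on $q^{-1}([0,\epsilon']^2)$ only the polydisc structure is used. Elsewhere, only the single radius $r_i$ near $D_i$ is involved, and on those regions the other radius $r_j$ is identically $1$.
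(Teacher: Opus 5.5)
Your route is essentially the paper's. The paper gives no argument beyond ``$X$ is built by gluing the blocks along collars'', and pulling back the nine-piece decomposition of the square along $q$ (product structure on the collars, radial retractions of the vertex blocks, a cofibrant closed cover) is the natural way to make that precise. However, one step in your treatment of the ``main obstacle'' is false as stated, and it is the step your product structures rest on.

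\textbf{The false step.} You claim that outside $q^{-1}([0,\epsilon']^2)$ ``the other radius $r_j$ is identically $1$''. This cannot hold. The function $r_2$ is continuous, vanishes along $D_2\supset Z$, and equals $1$ far from $D_2$. So it takes every value in $(\epsilon',1)$ on $D_1$ itself and on the thin tube of $D_1$. Hence $q^{-1}([0,\epsilon']\times(\epsilon',1))$ is a region where both radii vary. It meets $Y_{01}$ and the edge collar $q^{-1}([\epsilon,\epsilon']\times[\epsilon',1])$, and shrinking $\epsilon'$ only moves this region; it does not remove it.

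\textbf{Why it matters.} Precisely on this region you need the tubular projection of $D_1$ to preserve $r_2$. Otherwise $Y_{01}$ is not a disc bundle over $D_1\cap\{r_2\geq\epsilon'\}$. Likewise the edge collar is not the product $q^{-1}(\{\epsilon\}\times[\epsilon',1])\times[\epsilon,\epsilon']$.

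\textbf{The repair.} Use the polydisc structure on the whole overlap of the two tubes, not just on $q^{-1}([0,\epsilon']^2)$. Arrange $\{r_1<1\}\cap\{r_2<1\}\subset B_{12}$, with $r_1,r_2$ increasing functions of the two fiber radii there, and with the tube of each $D_i$ restricting on $B_{12}$ to the corresponding coordinate projection. This is what the commuting tubular data of \S\ref{geometry} is meant to supply. Then $q^{-1}((0,1)^2)\cong S_{12}\times(0,1)^2$. The $r_1$-radial flow preserves $r_2$ on the overlap, and off the overlap one radius really is $\equiv 1$. So your single-tube argument applies there, and the two descriptions match on the common boundary.

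\textbf{A smaller point.} The nine closed pieces are not contained in one another. So the strict colimit presenting $X$ is over the pieces together with their intersections along the interior grid edges and the four interior grid points, i.e.\ over the face poset of the $2\times 2$ subdivision of $\square$, not over the face poset of $\square$. You recover the $\square$-indexed diagram of the lemma because each such intersection is a collar face homotopy equivalent to the piece of the larger face. For example,
\[
C\cap q^{-1}([\epsilon',1]\times[\epsilon,\epsilon'])\cong S_{12}\times[\epsilon,\epsilon']\simeq C,
\qquad
q^{-1}(\{(\epsilon',\epsilon')\})\cong S_{12}.
\]
This is how the phrase ``inclusions of boundary pieces'' in the statement has to be read, and it should be said explicitly in your final step.
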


This is the ``cubical homotopy pushout'' replacing the square pushout
diagram (1.2) of \cite{IS}; it is simply the statement that $X$ is built by
gluing the blocks along collars, exactly as in the smooth--divisor case but
with one $2$--dimensional gluing parameter at the corner.

\section{Bifiltered patching collections}
\label{bifiltered}

In this section $X$ is any smooth manifold and $E$ a $\mathcal{C}^{\infty}$
complex vector bundle on $X$. We extend \cite[\S 4.1]{IS} to allow pairs of
filtrations.

\begin{definition}\label{bipatchdef}
A \emph{$2$--patching collection} on $(X,E)$ consists of an open cover
$\{V_i\}_{i\in I}$ and, for each $i$, either
\begin{itemize}
\item[(I)] a single increasing filtration $W^i$ of $E|_{V_i}$ by strict
subbundles, together with a flat connection $\nabla_{i,\Grr}$ on
$\Grr^{W^i}(E|_{V_i})$ (a \emph{simple} patch, as in \cite{IS}); or
\item[(II)] an ordered pair of increasing filtrations
$(\Wone{}^{,i},\Wtwo{}^{,i})$ of $E|_{V_i}$ by strict subbundles such that all
intersections $\Wone_a\cap\Wtwo_b$ are strict subbundles (equivalently: the
pair admits, locally on $V_i$, a simultaneous splitting
$E=\bigoplus_{a,b}E_{a,b}$), together with a flat connection
$\nabla_{i,\Grr\Grr}$ on the double associated--graded
$$
\Grr\Grr(E|_{V_i}) \;:=\; \bigoplus_{a,b}
\frac{\Wone_a\cap\Wtwo_b}{\Wone_{a-1}\cap\Wtwo_b+\Wone_a\cap\Wtwo_{b-1}}
$$
(a \emph{corner} patch).
\end{itemize}
A connection $\nabla$ on $E$ is \emph{compatible} with the collection if over
each simple patch it preserves $W^i$ and induces $\nabla_{i,\Grr}$ on
$\Grr^{W^i}$, and over each corner patch it preserves \emph{both}
$\Wone{}^{,i}$ and $\Wtwo{}^{,i}$ and induces $\nabla_{i,\Grr\Grr}$ on
$\Grr\Grr$.
\end{definition}

Note that a connection preserving both filtrations of a corner patch
automatically preserves every $\Wone_a\cap\Wtwo_b$, hence acts on
$\Grr\Grr$.

Two total filtrations canonically attached to a corner pair will be used:
the \emph{total degree filtration}
\begin{equation}\label{totaldeg}
F_c \;:=\; \sum_{a+b\leq c} \Wone_a\cap\Wtwo_b ,
\end{equation}
which is a filtration by strict subbundles (use a local simultaneous
splitting to see local freeness), and the two \emph{lexicographic
refinements}
\begin{equation}\label{lexdef}
\Lex(\Wone,\Wtwo)_{(a,b)} \;:=\; \Wone_{a-1} + \big( \Wone_a\cap\Wtwo_b \big),
\qquad \m{ordered lexicographically in } (a,b),
\end{equation}
and symmetrically $\Lex(\Wtwo,\Wone)$. One checks, by the Zassenhaus
isomorphism, that the associated graded of $F$, of $\Lex(\Wone,\Wtwo)$ and of
$\Lex(\Wtwo,\Wone)$ are each canonically identified with $\Grr\Grr(E)$
(with different orderings of the summands). $\Lex(\Wone,\Wtwo)$ refines
$\Wone$, and is preserved by any endomorphism--valued object preserving both
$\Wone$ and $\Wtwo$, and also by any connection which preserves $\Wone$ and
whose induced connection on $\Grr^{\Wone}$ preserves the filtration induced
there by $\Wtwo$.

The following example --- the local model at a corner for the kernel
filtrations of two commuting unipotent monodromies (cf.\ Lemma
\ref{kernelpair} below) --- makes the double filtration, the strictness
condition of Definition \ref{bipatchdef}(II), and the two lexicographic
refinements completely explicit.

\begin{example}\label{lexexample}
Let $K$ denote the standard $2$--dimensional unipotent object, with basis
$f_0,f_1$ and nilpotent $Nf_0=f_1$, $Nf_1=0$. Put
$$
V \,:=\, K\otimes K \,\cong\, \comx^4, \qquad
e_{ab}:= f_a\otimes f_b \quad (a,b\in\{0,1\}),
$$
with the two commuting nilpotent endomorphisms
$$
N_1 := N\otimes 1, \qquad N_2 := 1\otimes N,
$$
so that $N_1e_{0b}=e_{1b}$, $N_1e_{1b}=0$ and $N_2e_{a0}=e_{a1}$,
$N_2e_{a1}=0$. Geometrically this is the rank--$4$ local system on
$(\Delta^{\ast})^2$ with monodromies $\rho(\gamma_i)=\exp(N_i)$ around the
two branches $D_i=\{z_i=0\}$, namely the flat sections of
$$
\nabla \,=\, d \,-\, \f{1}{2\pi i}\Big(N_1\f{dz_1}{z_1}+N_2\f{dz_2}{z_2}\Big)
$$
on the trivial bundle; it is the external tensor square of the standard
rank--$2$ example of \cite{IS}.

\emph{The double filtration.} Take the two kernel filtrations
$\Wone_a:=\ker(N_1^{a})$ and $\Wtwo_b:=\ker(N_2^{b})$, so with $\Wone_0=
\Wtwo_0=0$:
$$
\Wone_1=\langle e_{10},e_{11}\rangle,\quad \Wone_2=V; \qquad\quad
\Wtwo_1=\langle e_{01},e_{11}\rangle,\quad \Wtwo_2=V.
$$
Every intersection is a direct summand; indeed the four lines
$$
V_{1,1}=\langle e_{11}\rangle,\quad V_{1,2}=\langle e_{10}\rangle,\quad
V_{2,1}=\langle e_{01}\rangle,\quad V_{2,2}=\langle e_{00}\rangle
$$
form a simultaneous splitting $V=\bigoplus_{a,b}V_{a,b}$ as in Definition
\ref{bipatchdef}(II), with $\Wone_a=\bigoplus_{a'\leq a}V_{a',\bullet}$ and
$\Wtwo_b=\bigoplus_{b'\leq b}V_{\bullet,b'}$ (for instance
$\Wone_1\cap\Wtwo_1=\langle e_{11}\rangle$). The double associated--graded is
the direct sum of four lines,
$$
\Grr\Grr_{a,b}(V)\,\cong\, V_{a,b}, \qquad (a,b)\in\{1,2\}^2,
$$
and $N_1$, $N_2$ have pure bidegrees $(-1,0)$ and $(0,-1)$ for this
bigrading, so both induce $0$ on $\Grr\Grr$: the induced monodromies there
are trivial, which is the extendability across both branches used later.

\emph{Total degree filtration.} By \eqref{totaldeg},
$$
F_2=\langle e_{11}\rangle \,\subset\,
F_3=\langle e_{11},e_{10},e_{01}\rangle \,\subset\, F_4=V,
$$
with graded ranks $(1,2,1)$. The middle graded piece is
$\Grr\Grr_{1,2}\oplus\Grr\Grr_{2,1}$: the total degree filtration alone does
not separate the two off--diagonal corner pieces.

\emph{The two lexicographic refinements.} Running through
$(a,b)=(1,1),(1,2),(2,1),(2,2)$ in lexicographic order, formula
\eqref{lexdef} gives the complete flag
$$
\Lex(\Wone,\Wtwo):\qquad
\langle e_{11}\rangle \subset \langle e_{11},e_{10}\rangle \subset
\langle e_{11},e_{10},e_{01}\rangle \subset V,
$$
while the symmetric construction gives
$$
\Lex(\Wtwo,\Wone):\qquad
\langle e_{11}\rangle \subset \langle e_{11},e_{01}\rangle \subset
\langle e_{11},e_{01},e_{10}\rangle \subset V.
$$
Both refine $F$ (and refine $\Wone$, resp.\ $\Wtwo$); they differ exactly in
the middle, rank--$2$ term, and their successive quotients are the four lines
$\Grr\Grr_{a,b}$ taken in the two lexicographic orders --- i.e.\ the two
possible orderings of the summands of $\Grr^F_3=\Grr\Grr_{1,2}\oplus
\Grr\Grr_{2,1}$. This makes concrete the Zassenhaus identifications stated
above.

\emph{Matrix picture.} In the ordered basis $(e_{11},e_{10},e_{01},e_{00})$
an endomorphism preserving \emph{both} filtrations has the shape
$$
\begin{pmatrix}
\ast & \ast & \ast & \ast\\
0 & \ast & 0 & \ast\\
0 & 0 & \ast & \ast\\
0 & 0 & 0 & \ast
\end{pmatrix},
$$
i.e.\ it is upper triangular for \emph{both} complete flags
$\Lex(\Wone,\Wtwo)$ and $\Lex(\Wtwo,\Wone)$ simultaneously; the vanishing of
the $(2,3)$ and $(3,2)$ entries is exactly the extra constraint imposed by
the bifiltration beyond a single lexicographic flag. The block diagonal is
the induced endomorphism of $\Grr\Grr$. For instance $N_1$ has its only
nonzero entries in positions $(1,3)$ and $(2,4)$, and $N_2$ in positions
$(1,2)$ and $(3,4)$, both strictly below the bigrading. A connection
compatible with this corner patch is, in a frame adapted to the splitting,
of the form $\nabla=d+A$ with $A$ a $1$--form of the above shape whose
block--diagonal part is flat (here trivial); its curvature $\Omega$ then
strictly decreases the bifiltration, as asserted in Proposition
\ref{binildef}(a) below. Finally, the bigrading $V_{a,b}$ is exactly what is
rescaled by the two--parameter deformation
$\m{Ad}(\psi^1_{t_1}\psi^2_{t_2})$ of \S\ref{kdef}: the entry in block
$Hom(V_{a,b},V_{a',b'})$ is multiplied by $t_1^{\,a-a'}t_2^{\,b-b'}$, and on
matrices of the above shape all these exponents are non--negative.
\end{example}

\begin{proposition}\label{binildef}
Let $\nabla$ be compatible with a $2$--patching collection. Then:
\newline
(a) over a simple patch the curvature $\Omega$ of $\nabla$ is strictly upper
triangular for $W^i$, and over a corner patch $\Omega$ strictly decreases the
bifiltration:
$$
\Omega\big(\Wone_a\cap\Wtwo_b\big) \;\subset\;
A^2\big( \Wone_{a-1}\cap\Wtwo_b \,+\, \Wone_a\cap\Wtwo_{b-1} \big);
$$
in particular $\Omega(F_c)\subset A^2(F_{c-1})$ for the total degree
filtration \eqref{totaldeg};
\newline
(b) every invariant polynomial of $\Omega$ vanishes identically:
$\cP_k(\Omega,\ldots,\Omega)= 0$ on $X$ for all $k\geq 1$;
\newline
(c) the Cheeger--Simons differential character of $\nabla$ lies in
$H^{2p-1}(X,\comx/\Z)$, defining classes $\what{c_p}(E,\nabla)$ for all
$p\geq 1$.
\end{proposition}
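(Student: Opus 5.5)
The argument is local for (a) and (b), and then follows \cite[\S 4.1]{IS} for (c).

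\emph{Part (a).} Over a simple patch this is the smooth--divisor case. Since $\nabla$ preserves $W^i$, so does $\Omega=\nabla^2$. The curvature induced on $\Grr^{W^i}$ is the curvature of $\nabla_{i,\Grr}$, which is zero. Hence $\Omega(W^i_a)\subset A^2(W^i_{a-1})$.

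Over a corner patch I work locally with a simultaneous splitting $E=\bigoplus_{a,b}E_{a,b}$, which exists by Definition \ref{bipatchdef}(II).
\begin{itemize}
\item Because $\nabla$ preserves $\Wone$ and $\Wtwo$, it preserves each $\Wone_a\cap\Wtwo_b$, as noted after the definition. So $\Omega$ does too.
\item The subspace $\Wone_{a-1}\cap\Wtwo_b+\Wone_a\cap\Wtwo_{b-1}$ is a sum of two preserved subbundles, hence is also preserved.
\item The endomorphism of the quotient $\Grr\Grr_{a,b}$ induced by $\Omega$ is the curvature of the induced connection, namely that of $\nabla_{i,\Grr\Grr}$. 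This curvature vanishes because $\nabla_{i,\Grr\Grr}$ is flat.
\end{itemize}
This gives the displayed inclusion. The consequence for $F_c$ follows since $\Wone_{a-1}\cap\Wtwo_b$ and $\Wone_a\cap\Wtwo_{b-1}$ both lie in $F_{a+b-1}$.

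\emph{Part (b).} This is pointwise linear algebra. On a patch, pick a local frame adapted to a complete flag refining the relevant filtration: $W^i$ in the simple case, $F$ (or $\Lex(\Wone,\Wtwo)$) in the corner case. This is possible since these are filtrations by strict subbundles.

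In such a frame $\Omega$ is a strictly upper triangular matrix of $2$--forms. Even $2$--forms commute, so products of strictly upper triangular matrices of $2$--forms are again strictly upper triangular, with zero trace. Invariant polynomials are polynomials in $\mathrm{tr}(\Omega^j)$, so they vanish on each patch. The patches cover $X$, so they vanish globally.

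One point needs care here. The filtrations differ from patch to patch, but the vanishing is pointwise and uses only one patch containing the given point. So there is no compatibility issue between patches.

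\emph{Part (c).} Part (b) says the Chern forms $P_p(\Omega)$ vanish identically. The Cheeger--Simons differential character $\what{c_p}(E,\nabla)\in\what{H^{2p-1}}(X,\comx/\Z)$ has curvature form $P_p(\Omega)=0$. By the exact sequence (2.2) of \cite{IS}, characters with zero curvature form are exactly the classes in $H^{2p-1}(X,\comx/\Z)$, and this holds for every $p\geq1$.

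\emph{Main obstacle.} I expect the only delicate point to be the identification in (a) of the induced curvature on $\Grr\Grr$ with that of the induced connection. This needs the denominators $\Wone_{a-1}\cap\Wtwo_b+\Wone_a\cap\Wtwo_{b-1}$ to be subbundles preserved by $\nabla$. Strictness, through the local simultaneous splitting, guarantees this: in the splitting the denominator is $\bigoplus E_{a',b'}$ over $a'\leq a$, $b'\leq b$, $(a',b')\neq(a,b)$.
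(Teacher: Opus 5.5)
Your proposal is correct and follows the paper's proof essentially step for step. For (a) you use preservation of $\Wone_a\cap\Wtwo_b$ together with flatness of the induced connection on $\Grr\Grr$, for (b) pointwise strict triangularity in a frame adapted to $W^i$ or to the total degree filtration $F$, and for (c) the exact sequence (2.2) of \cite{IS}; your remark that the denominators $\Wone_{a-1}\cap\Wtwo_b+\Wone_a\cap\Wtwo_{b-1}$ are preserved subbundles, visible in a local simultaneous splitting, only makes explicit a step the paper leaves implicit.
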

\begin{proof}
(a) Over a simple patch this is \cite[Prop.\ 4.1]{IS}. Over a corner patch:
$\nabla$ preserves $\Wone_a\cap\Wtwo_b$, so
$\Omega(\Wone_a\cap\Wtwo_b)\subset A^2(\Wone_a\cap\Wtwo_b)$; and the induced
connection on each $\Grr\Grr$--piece is flat, so the image of $\Omega$ in
$A^2(\Grr\Grr_{a,b})$ vanishes, which is the stated strict decrease. The
statement for $F$ follows by summing. (b) At each point of $X$ the form
$\Omega$ is strictly upper triangular in a local frame adapted to $W^i$
(simple patch) or to $F$ (corner patch), so all traces of powers, hence all
invariant polynomials, vanish; vanishing is a pointwise statement so the
cases may be checked separately. (c) Exactly as in \cite[Prop.\ 4.1(c)]{IS},
using the exact sequence (2.2) of \cite{IS} and \cite[Cor.\ 2.4]{Ch-Si}.
\end{proof}

\begin{lemma}[Invariance]\label{biinv}
If $\nabla_0,\nabla_1$ are two connections compatible with the same
$2$--patching collection, then
$\what{c_p}(E,\nabla_0)=\what{c_p}(E,\nabla_1)$ in $H^{2p-1}(X,\comx/\Z)$ for
all $p\geq 1$.
\end{lemma}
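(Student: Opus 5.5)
The argument follows \cite[Lemma 4.2]{IS}: connect $\nabla_0$ to $\nabla_1$ by the straight-line family, show that the Chern--Simons transgression form vanishes identically by a triangularity argument, and conclude with the variational formula.

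\textbf{The path.} Set $\nabla_t:=(1-t)\nabla_0+t\nabla_1$ for $t\in[0,1]$. Compatibility with a fixed $2$--patching collection is preserved under affine combinations. Over a simple patch both connections preserve $W^i$ and induce the same $\nabla_{i,\Grr}$. Over a corner patch both preserve $\Wone{}^{,i}$ and $\Wtwo{}^{,i}$, hence every $\Wone_a\cap\Wtwo_b$, and both induce the same $\nabla_{i,\Grr\Grr}$. So each $\nabla_t$ is compatible with the collection. The difference $\eta:=\nabla_1-\nabla_0\in A^1(\mathrm{End}\,E)$ then has the following shape:
\begin{itemize}
\item over a simple patch, it preserves $W^i$ and induces $0$ on $\Grr^{W^i}$, so it is strictly upper triangular;
\item over a corner patch, it satisfies $\eta(\Wone_a\cap\Wtwo_b)\subset A^1(\Wone_{a-1}\cap\Wtwo_b+\Wone_a\cap\Wtwo_{b-1})$, and in particular $\eta(F_c)\subset A^1(F_{c-1})$.
\end{itemize}
By Proposition \ref{binildef}(a), the curvature $\Omega_t$ of $\nabla_t$ strictly decreases $W^i$, respectively $F$, on the same patches.

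\textbf{Vanishing of the transgression.} By the variational formula \cite[Prop.\ 2.9]{Ch-Si},
$$\what{c_p}(E,\nabla_1)-\what{c_p}(E,\nabla_0)=\Big[\,p\int_0^1\cP_p(\eta,\Omega_t,\ldots,\Omega_t)\,dt\,\Big]$$
modulo the image of the $\comx/\Z$--valued characters. Since both endpoints already lie in $H^{2p-1}(X,\comx/\Z)$ by Proposition \ref{binildef}(c), it suffices to show that the form $\cP_p(\eta,\Omega_t,\ldots,\Omega_t)$ vanishes identically. This is a pointwise statement, so we check it on each patch separately. In a local frame adapted to $W^i$ (simple patch) or to the total degree filtration $F$ (corner patch), $\eta$ and all $\Omega_t$ are strictly block upper triangular for the same flag. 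Polarized invariant polynomials are combinations of traces of products of their arguments. A product of strictly block upper triangular matrix-valued forms is again strictly block upper triangular, so it has zero trace, and the graded signs do not affect this. Hence $\cP_p(\eta,\Omega_t,\ldots,\Omega_t)=0$ for every $t$.

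Consequently the transgression form is zero, not merely closed. The difference of the two characters is therefore zero as a differential character, and in particular its class in $H^{2p-1}(X,\comx/\Z)$ vanishes.

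\textbf{Expected obstacle.} The only point needing care is that the variational formula gives the difference as a form, while a difference that is merely exact could in principle leave a nonzero class in $H^{2p-1}(X,\comx/\Z)$. I expect this to be the main subtlety. It is avoided here because the integrand vanishes identically rather than just being exact. One should also check that the corner case really uses the total degree filtration $F$, not a lexicographic flag. On a corner patch a compatible connection need not preserve a single complete flag like $\Lex(\Wone,\Wtwo)$ pointwise with zero diagonal. It does, however, map $F_c$ into $F_{c-1}$, because $\eta$ and $\Omega_t$ lower the bifiltration. Since $F$ is a filtration by strict subbundles, local adapted frames exist, and the trace argument goes through.
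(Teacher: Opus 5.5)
Your proposal is correct and is essentially the paper's own argument: the affine path $\nabla_t$ stays compatible, and both $\nabla_1-\nabla_0$ and $\Omega_t$ are pointwise strictly triangular for the local (bi)filtrations, so every trace term in the Cheeger--Simons variational formula vanishes identically. Two of your side remarks are slightly off but do not affect the proof: an \emph{exact} transgression form already gives a zero difference of characters (the real worry would be a closed but non-exact one), and $\nabla_1-\nabla_0$ and $\Omega_t$ do in fact strictly lower $\Lex(\Wone,\Wtwo)$ as well, so a lexicographic refinement would serve as well as $F$.
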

\begin{proof}
The compatibility conditions are convex, so the affine path
$\nabla_t=(1-t)\nabla_0+t\nabla_1$ consists of compatible connections.
Both $\nabla'_t=\nabla_1-\nabla_0$ and $\Omega_t$ preserve the local
(bi)filtrations; moreover $\nabla'_t$ is the difference of two connections
inducing the \emph{same} connection on the local associated graded, hence
$\nabla'_t$ is \emph{strictly} triangular pointwise, as is $\Omega_t$ by
Proposition \ref{binildef}(a). Therefore every term
$Tr(\nabla'_t\wedge\Omega_t^{a-1})Tr(\Omega_t^{b})\cdots$ in the variational
formula \cite[Prop.\ 2.9]{Ch-Si} (cf.\ the variation-formula discussion in
\cite[\S 2.2]{IS}) vanishes identically, for all $p\geq 1$. The class is
constant in $t$.
\end{proof}

As in \cite{IS} we write
$\what{c_p}\big(X,E,\{(V_i,\m{patch data})\}\big)$ for the common value, when
a compatible connection exists; a collection admitting one is called a
\emph{$2$--patching collection with patched connection}.

\begin{remark}
The refinement formalism of \cite[\S 4.2]{IS} extends verbatim, with the
following additional moves, both of which leave the class unchanged by the
argument of \cite[Cor.\ 4.8]{IS} applied through Lemma \ref{biinv}:
(i) replacing a corner patch $(\Wone,\Wtwo)$ by either lexicographic
refinement \eqref{lexdef} regarded as a simple patch with graded connection
$\nabla_{\Grr\Grr}$ --- any connection compatible with the corner patch is
compatible with both simple lexicographic patches; (ii) restricting patches
to smaller open sets.
\end{remark}

\section{The patched connection for $\rho$ unipotent along $D_1\cup D_2$}
\label{construction}

We return to the situation and the cover $V_0=U$, $V_1, V_2, V_{12}$ of
\S \ref{geometry}, with \eqref{disjointness}, \eqref{disjointness2} in force.

\subsection{Bi--graded--extendable pairs of filtrations}

\begin{definition}\label{bigradedext}
A \emph{bi--graded--extendable pair} for $\rho$ consists of:
\begin{itemize}
\item a filtration $\Wone$ of $L|_{(B_1\cup B_{12})^{\ast}}$ by sub--local
systems, and a filtration $\Wtwo$ of $L|_{(B_2\cup B_{12})^{\ast}}$ by
sub--local systems,
\end{itemize}
such that:
\begin{itemize}
\item[(E1)] $\Grr^{\Wone}(L)$ extends to a local system on
$(B_1\cup B_{12})-D_2$ \,(it is a priori defined off $D_1\cup D_2$; the
condition is extendability across $D_1$); symmetrically for $\Wtwo$ across
$D_2$;
\item[(E2)] over $V_{12}^{\ast}$ the double graded $\Grr\Grr(L)$ of the pair
$(\Wone,\Wtwo)$ extends to a local system on $V_{12}$ (i.e.\ across both
branches);
\item[(E3)] over $V_{12}^{\ast}$, the filtration induced by $\Wtwo$ on
$\Grr^{\Wone}(L)$ extends (under (E1)) to a filtration by sub--local systems
of the extension of $\Grr^{\Wone}(L)$ over $V_{12}-D_2$, and symmetrically.
\end{itemize}
\end{definition}

Extensions of local systems across the divisors, when they exist, are unique
because $\pi_1(V^{\ast})\rar\pi_1(V)$ is surjective.

\begin{lemma}\label{kernelpair}
The pair of kernel filtrations
$$
\Wone_a := \ker\big(N_1^{a}\big)\subset L, \qquad
\Wtwo_b := \ker\big(N_2^{b}\big)\subset L
$$
(defined over the indicated punctured neighborhoods, using
\eqref{centrality}) is a bi--graded--extendable pair. The same holds for the
pair of monodromy weight filtrations $W(N_1), W(N_2)$.
\end{lemma}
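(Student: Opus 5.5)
Everything will be reduced to linear algebra of the commuting pair $(N_1,N_2)$ together with the centrality relations \eqref{centrality}.

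\emph{Step 1: the filtrations are by sub--local systems.} By \eqref{centrality}, $\rho(\pi_1(V_1^{\ast}))$ commutes with $N_1$ and $\rho(\pi_1(V_{12}^{\ast}))$ commutes with $N_1$ and $N_2$. Hence $\ker(N_1^a)$ is $\pi_1$--stable on $V_1^\ast$ and on $V_{12}^\ast$. On the overlap $V_1^\ast\cap V_{12}^\ast$ the two definitions agree, since $N_1$ is the logarithm of the same loop class there. So $\Wone$ is a sub--local system of $L|_{(B_1\cup B_{12})^\ast}$. The same argument applies to $\Wtwo$. For weight filtrations one uses that $W(N)$ is canonically attached to $N$, so it is preserved by everything commuting with $N$.

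\emph{Step 2: (E1).} The monodromy of $\Grr^{\Wone}(L)$ around $D_1$ is $\exp(N_1)$ acting on the graded pieces. Since $N_1\Wone_a\subset\Wone_{a-1}$ (for the kernel filtration, and also for $W(N_1)$, where $N_1W_k\subset W_{k-2}$), $N_1$ induces $0$ on the graded pieces. Thus $\gamma_1$ acts trivially on $\Grr^{\Wone}(L)$. Because the kernel of $\pi_1((B_1\cup B_{12})^\ast)\to\pi_1((B_1\cup B_{12})-D_2)$ is normally generated by $\gamma_1$, the representation descends, giving the extension across $D_1$. \textbf{(E2)} is argued the same way. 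Each $\Wone_a\cap\Wtwo_b$ is stable under both $N_i$, because $[N_1,N_2]=0$ makes $N_2$ preserve $\ker N_1^a$ and likewise for $W(N_1)$. Moreover $N_1$ maps $\Wone_a\cap\Wtwo_b$ into $\Wone_{a-1}\cap\Wtwo_b$, and $N_2$ maps it into $\Wone_a\cap\Wtwo_{b-1}$. So both $\gamma_1,\gamma_2$ act trivially on $\Grr\Grr(L)$, and since the kernel of $\pi_1(V_{12}^\ast)\to\pi_1(V_{12})$ is normally generated by $\gamma_1,\gamma_2$, $\Grr\Grr(L)$ extends to $V_{12}$.

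\emph{Step 3: (E3).} The filtration induced by $\Wtwo$ on $\Grr^{\Wone}_a(L)$ is the image of $\Wone_a\cap\Wtwo_b$. As a sub--local system on $V_{12}^\ast$ it is preserved by $\pi_1(V_{12}^\ast)$, since $\Wone$ and $\Wtwo$ are. On the extension over $V_{12}-D_2$, the group $\pi_1(V_{12}-D_2)$ is a quotient of $\pi_1(V_{12}^\ast)$ by the normal closure of $\gamma_1$. Its action is induced from the action of $\pi_1(V_{12}^\ast)$, so invariant subspaces stay invariant and the filtration extends.

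The step needing the most care is the fundamental--group bookkeeping. One must check that the restriction to the relevant punctured open set $V^\ast$ of the target has $\pi_1$ surjecting with kernel normally generated by $\gamma_1$. For $(B_1\cup B_{12})^\ast\subset(B_1\cup B_{12})-D_2$ this needs a van Kampen argument over the union $B_1\cup B_{12}$, using the tubular structure of \S\ref{geometry}. The purely linear--algebraic inputs, $N_1W_k(N_1)\subset W_{k-2}(N_1)$ and $N_2$ preserving $W(N_1)$, are standard consequences of commutation and of the canonicity of the weight filtration.
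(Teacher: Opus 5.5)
Your proposal is correct and is essentially the paper's proof. It uses monodromy--stability of the filtrations from \eqref{centrality} and canonicity of $W(N_i)$, strict lowering by $N_1,N_2$ so that $\gamma_1,\gamma_2$ act trivially on $\Grr^{\Wone}$, $\Grr^{\Wtwo}$ and $\Grr\Grr$, and descent through the surjections of fundamental groups. The one difference is in (E3), where you argue directly that the images of $\Wone_a\cap\Wtwo_b$ are $\pi_1(V_{12}^{\ast})$--stable and hence stable under the quotient $\pi_1(V_{12}-D_2)$, as in Lemma \ref{autoE23}, whereas the paper's parenthetical instead identifies the induced filtrations with kernel/weight filtrations of the induced nilpotent. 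Your route is the better one, since that identification fails for $N_1=N\oplus N$, $N_2=N\oplus 0$ on $K\oplus K$ (notation of Example \ref{lexexample}): there $N_2$ induces $0$ on $(K\oplus K)/\ker N_1$, yet the induced $\Wtwo$--filtration on that quotient has two jumps.
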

\begin{proof}
By \eqref{centrality} the full local monodromy near the relevant boundary
stratum commutes with $N_i$, hence preserves $\ker N_i^a$ (resp.\ the weight
filtration $W(N_i)$, which is canonically attached to $N_i$); so these are
filtrations by sub--local systems, and $N_2$ preserves $\Wone$ near $Z$ and
vice versa, giving (E3) (the induced filtrations are again kernel/weight
filtrations of the induced nilpotent, hence monodromy--invariant). For (E1):
$\gamma_1$ acts trivially on $\Grr^{\Wone}$ (for the kernel filtration this
is \cite[Lemma 2.6]{IS}; for the weight filtration, $N_1$ strictly decreases
$W(N_1)$ so the induced $N_1$, hence $\rho(\gamma_1)-1=e^{N_1}-1$, vanishes
on the graded), so the graded local system extends across $D_1$ exactly as in
\cite[Lemma 2.6]{IS}. For (E2): on the double graded both induced $N_1$ and
$N_2$ vanish, so both $\gamma_1,\gamma_2$ act trivially and the local system
extends across both branches of $D$ over $V_{12}$.
\end{proof}

\subsection{The $2$--patching collection attached to
$(\rho,\Wone,\Wtwo)$}\label{collectionfromrho}

Fix a bi--graded--extendable pair $(\Wone,\Wtwo)$. As in \cite[\S 4.4]{IS},
each $\Wone_a$ extends canonically to a strict subbundle of
$\ov E|_{B_1\cup B_{12}}$ (take the canonical extension of the flat
subbundle, or equivalently the saturation; the residues being nilpotent, the
two agree), and similarly for $\Wtwo$, for the intersections
$\Wone_a\cap\Wtwo_b$ over $V_{12}$, and for the graded and double--graded
bundles, whose extended local systems (E1), (E2) endow them with flat
connections over the indicated extended neighborhoods. We obtain a
$2$--patching collection:
\begin{itemize}
\item on $V_0=U$: the trivial filtration, with $\nabla_{0,\Grr}=\nabla$;
\item on $V_1$: the simple patch $(\Wone, \nabla_{1,\Grr})$, where
$\nabla_{1,\Grr}$ is the flat connection on $\Grr^{\Wone}(\ov E|_{V_1})$
given by (E1) --- this makes sense because $V_1\cap D_2=\emptyset$ by
\eqref{disjointness}, so over $V_1$ the graded local system has genuinely
extended;
\item on $V_2$: symmetrically, the simple patch $(\Wtwo,\nabla_{2,\Grr})$;
\item on $V_{12}$: the corner patch
$\big((\Wone,\Wtwo), \nabla_{12,\Grr\Grr}\big)$ with the flat connection
given by (E2).
\end{itemize}
The restriction compatibilities hold by uniqueness of flat extensions:
on $V_1\cap V_{12}$ (which misses $D_2$) the connection $\nabla_{1,\Grr}$
preserves the filtration induced by $\Wtwo$ on $\Grr^{\Wone}$ (this is (E3):
the induced filtration is by flat subbundles for $\nabla_{1,\Grr}$, being a
filtration by sub--local systems of the extended system) and induces
$\nabla_{12,\Grr\Grr}$ on the double graded; symmetrically on
$V_2\cap V_{12}$.

\begin{theorem}\label{biconstruction}
The above $2$--patching collection admits a compatible (patched) connection
$\nabla^{\#}$ on $\ov E$ over $X$, in the following slightly localized sense:
there is a smooth connection $\nabla^{\#}$ on $\ov E$ such that every point
$x\in X$ has a neighborhood on which $\nabla^{\#}$ is compatible with (the
restriction of) one of the patches of the collection, or with one of its
lexicographic refinements \eqref{lexdef}. Consequently all Chern forms of
$\nabla^{\#}$ vanish identically, $\nabla^{\#}$ defines classes
$$
\what{c}_p(\rho,\Wone,\Wtwo) := \what{c}_p(\nabla^{\#})
\in H^{2p-1}(X,\comx/\Z), \qquad p\geq 1,
$$
and these are independent of all choices made in the construction of
$\nabla^{\#}$ (metrics, splittings, partitions of unity).
\end{theorem}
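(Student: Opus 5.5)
The construction of $\nabla^{\#}$ will be a partition-of-unity gluing of local model connections, following \cite[\S 4.4]{IS}, with the corner patch treated via a simultaneous splitting.

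\emph{Local models.} On each piece of the cover I will build a connection that is already compatible with that piece's patch data.
\begin{itemize}
\item On $V_0=U$ take $\nabla$ itself.
\item On $V_i$ ($i=1,2$) choose a $\mathcal C^\infty$ splitting of the extended filtration $\Wone$ (resp.\ $\Wtwo$) of $\ov E|_{V_i}$. Transport $\nabla_{i,\Grr}$ through this splitting to get $\nabla_i$. This connection preserves $W^i$ and induces $\nabla_{i,\Grr}$.
\item On $V_{12}$ choose a simultaneous splitting $\ov E=\bigoplus_{a,b}E_{a,b}$ of the pair. It exists because all intersections $\Wone_a\cap\Wtwo_b$ are strict subbundles, so local splittings can be glued with a partition of unity: splittings adapted to both filtrations form an affine space over the bundle of maps of strictly negative bidegree. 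Define $\nabla_{12}$ as $\nabla_{12,\Grr\Grr}$ transported blockwise. It preserves both filtrations and induces $\nabla_{12,\Grr\Grr}$.
\end{itemize}

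\emph{Gluing.} Take a partition of unity $\{\phi_0,\phi_1,\phi_2,\phi_{12}\}$ subordinate to the cover and set $\nabla^{\#}=\sum\phi_j\nabla_j$.
\begin{itemize}
\item Near a point of $D_1-Z$, only $V_0,V_1$ (and possibly $V_{12}$) meet. Both $\nabla_0$ and $\nabla_1$ preserve $\Wone$ and induce $\nabla_{1,\Grr}$ on $U\cap V_1$, by flatness of the sub-local systems and uniqueness of extensions.
\item Near $Z$, with $V_1\cap V_2=\emptyset$ by \eqref{disjointness2}, a point lies in at most $V_0,V_{12}$ and one $V_i$.
\item On $V_0\cap V_{12}$ all three connections $\nabla$, $\nabla_{12}$ and $\nabla_i$ preserve $\Wone$ and $\Wtwo$.
\item On $V_1\cap V_{12}$ the connection $\nabla_1$ preserves $\Wone$, and by (E3) its graded part $\nabla_{1,\Grr}$ preserves the induced $\Wtwo$-filtration and induces $\nabla_{12,\Grr\Grr}$ on the double graded. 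So all three preserve the lexicographic flag $\Lex(\Wone,\Wtwo)$ and induce the same connection on its graded $\cong\Grr\Grr$.
\end{itemize}
Since compatibility with a fixed (simple or corner) patch is an affine condition, the convex combination is compatible locally with the appropriate patch or lexicographic refinement. This yields exactly the ``localized'' statement of the theorem.

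\emph{Consequences.} At each point $\nabla^{\#}$ is compatible with some simple or corner patch. Proposition~\ref{binildef}(a),(b) is pointwise, so every Chern form vanishes and (c) gives the classes.

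\emph{Independence of choices.} Two choices of metrics, splittings and partitions of unity give $\nabla^{\#}_0$ and $\nabla^{\#}_1$. Run the argument of Lemma~\ref{biinv} on the affine path $\nabla^{\#}_t$: the two connections are compatible at each point with a common local patch datum. The subtle point is that they might be compatible with different refinements at the same point, say $\Wone$-$\Lex$ from one and the corner patch from the other. I would handle this by noting that corner compatibility implies compatibility with both lexicographic refinements (Remark above). So near any point both connections are compatible with the same single filtration: the lexicographic one, or $W^i$, which $\Lex$ refines.

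The variational terms $Tr(\nabla'_t\wedge\Omega_t^{a-1})\cdots$ then vanish pointwise by strict triangularity. The Cheeger--Simons variation integrates this local vanishing, so the class is constant in $t$.

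\emph{Main obstacle.} I expect the hardest step to be the overlap $V_i\cap V_{12}$ near the edge of the corner region. There one must check carefully that the extended filtrations and flat graded connections genuinely match. This includes whether the saturations of $\Wone_a\cap\Wtwo_b$ agree with intersections of the saturations (needing nilpotent residues), and that (E3) provides the induced $\nabla_{12,\Grr\Grr}$ compatibly with the Zassenhaus identification.
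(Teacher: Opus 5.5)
Your proposal is correct and follows the paper's proof essentially step for step: the same local model connections on $V_0,V_1,V_2,V_{12}$, gluing by a partition of unity, a case analysis resting on $V_1\cap V_2=\emptyset$ with $\Lex(\Wone,\Wtwo)$ used on $V_1\cap V_{12}$ via (E3), Proposition~\ref{binildef} for the vanishing, and the pointwise convexity argument of Lemma~\ref{biinv} for independence (where you are more explicit than the paper about two sets of choices being compatible with different local patches at the same point). The differences are minor. The paper obtains the global corner bigrading from the metric construction of Lemma~\ref{metricsplit} rather than by gluing local splittings affinely; both work. Your third bullet should not list $\nabla_i$ among the connections preserving both $\Wone$ and $\Wtwo$, since a splitting of $\Wone$ alone need not respect $\Wtwo$. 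This is harmless, because any point where $\nabla_1$ contributes lies in $V_1\cap V_{12}$ and is covered by your $\Lex$ bullet.
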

\begin{proof}
\emph{Construction of local model connections.}
On $V_0$ take $\nabla_0:=\nabla$, the flat connection.
On $V_1$: as in \cite[Thm.\ 4.10]{IS}, choose a $\mathcal{C}^{\infty}$
splitting $\ov E|_{V_1}\cong \Grr^{\Wone}(\ov E|_{V_1})$ (e.g.\ by a metric)
and let $\nabla_1$ be the transport of $\nabla_{1,\Grr}$; thus $\nabla_1$
preserves $\Wone$ and induces $\nabla_{1,\Grr}$. Symmetrically $\nabla_2$ on
$V_2$.
On $V_{12}$: choose a $\mathcal{C}^{\infty}$ \emph{simultaneous} splitting
$$
\ov E|_{V_{12}} \;\cong\; \bigoplus_{a,b} E_{a,b},\qquad
\Wone_a = \bigoplus_{a'\leq a} E_{a',\bullet},\quad
\Wtwo_b = \bigoplus_{b'\leq b} E_{\bullet, b'} ,
$$
which exists globally over $V_{12}$: by Lemma \ref{strictcorner} all the
intersection and sum subsheaves formed from the pair
$(\Wone,\Wtwo)$ of $\ov E|_{V_{12}}$ are strict (constant--rank)
$\mathcal{C}^{\infty}$ subbundles, and by Lemma \ref{metricsplit} any
hermitian metric $h$ on $\ov E|_{V_{12}}$ then produces the global
\emph{metric bigrading}
$$
E_{a,b} \;:=\;
\big(\Wone_a\cap\Wtwo_b\big)\ \cap\
\big(\Wone_{a-1}\cap\Wtwo_b+\Wone_a\cap\Wtwo_{b-1}\big)^{\perp_h},
$$
which splits both filtrations simultaneously and depends smoothly on all
data. Let $\nabla_{12}$ be the transport of
$\nabla_{12,\Grr\Grr}$ through this splitting: it preserves both filtrations
and induces $\nabla_{12,\Grr\Grr}$ on $\Grr\Grr$.

\emph{Patching.} Choose a partition of unity
$1=\zeta_0+\zeta_1+\zeta_2+\zeta_{12}$ subordinate to the cover
$\{V_0,V_1,V_2,V_{12}\}$ and put
$$
\nabla^{\#} := \zeta_0\nabla_0+\zeta_1\nabla_1+\zeta_2\nabla_2
+\zeta_{12}\nabla_{12} .
$$
This is a connection by the Leibniz computation of \cite[Thm.\ 4.10]{IS}.

\emph{Pointwise triangularity.} Let $x\in X$ and let
$S\subset\{0,1,2,12\}$ be the set of indices $i$ with
$x\in {\rm Supp}\,\zeta_i$; by \eqref{disjointness2}, $\{1,2\}\not\subset S$.
Choose a neighborhood of $x$ meeting only the supports indexed by $S$.
We check in each case that all the connections $\nabla_i$, $i\in S$, preserve
a common filtration near $x$ and induce on its associated graded one and the
same flat connection; then so does $\nabla^{\#}=\sum_{i\in S}\zeta_i\nabla_i$
(using $\sum_{i\in S}\zeta_i=1$ near $x$), and we conclude by
Proposition \ref{binildef}.
\begin{itemize}
\item $S\subset\{0\}$: $\nabla^{\#}=\nabla$ is flat.
\item $S\subset\{0,1\}$: as in the smooth--divisor case
\cite[\S 4.4]{IS}: both $\nabla$ (flat; note $x\notin D$ if $0\in S$, and
$\Wone$ is a flat subbundle there) and $\nabla_1$ preserve $\Wone$ and induce
on $\Grr^{\Wone}$ the flat connection $\nabla_{1,\Grr}$ (for $\nabla$ this is
the definition of the extended flat structure). Same for $S\subset\{0,2\}$.
\item $S\subset\{0,12\}$: both $\nabla$ and $\nabla_{12}$ preserve the pair
$(\Wone,\Wtwo)$ near $x$ ($\nabla$ because near such $x$ we are off $D$ and
the $\Wone_a\cap\Wtwo_b$ are flat subbundles) and both induce
$\nabla_{12,\Grr\Grr}$ on $\Grr\Grr$. So $\nabla^{\#}$ is compatible with the
corner patch near $x$.
\item $S\subset\{0,1,12\}$: near such $x$ we are inside
$V_1\cap V_{12}$ or its closure, in particular off $D_2$. All three
connections preserve the lexicographic filtration
$\Lex(\Wone,\Wtwo)$ near $x$: $\nabla_{12}$ because it preserves both
filtrations; $\nabla_1$ because it preserves $\Wone$ and induces
$\nabla_{1,\Grr}$ on $\Grr^{\Wone}$, which preserves the induced
$\Wtwo$--filtration by (E3); $\nabla$ (when $0\in S$) because off $D$ the
lexicographic subbundles are flat. Moreover all three induce on
$\Grr^{\Lex}=\Grr\Grr$ the same flat connection, namely
$\nabla_{12,\Grr\Grr}$: for $\nabla_{12}$ by construction; for $\nabla_1$
because $\nabla_{1,\Grr}$ induces $\nabla_{12,\Grr\Grr}$ on the double graded
(restriction compatibility of \S \ref{collectionfromrho}); for $\nabla$
because the flat structure induces the extended flat structure, unique by
surjectivity of $\pi_1(V^{\ast})\rar\pi_1(V)$. So near $x$, $\nabla^{\#}$ is
compatible with the simple patch $\big(\Lex(\Wone,\Wtwo),
\nabla_{12,\Grr\Grr}\big)$.
\item $S\subset\{0,2,12\}$: symmetric, with $\Lex(\Wtwo,\Wone)$.
\end{itemize}
By \eqref{disjointness2} this exhausts all cases.

\emph{Vanishing and independence.} By Proposition \ref{binildef} applied to
the (localized, refined) collection just exhibited, all Chern forms of
$\nabla^{\#}$ vanish identically and the differential character lies in
$H^{2p-1}(X,\comx/\Z)$. Two different sets of choices yield connections
compatible, near every point, with the same localized collection (the
filtrations and graded connections are canonical; only splittings and
partitions of unity vary), hence equal classes by Lemma \ref{biinv} applied
on the common localization (formally: the convexity argument of Lemma
\ref{biinv} is pointwise and applies verbatim).
\end{proof}

\subsection{Independence of the pair of filtrations}

\begin{proposition}\label{biindepfilt}
The class $\what{c}_p(\rho,\Wone,\Wtwo)$ does not depend on the choice of
bi--graded--extendable pair. We write it
$$
\what{c}_p(\rho/X)\in H^{2p-1}(X,\comx/\Z).
$$
\end{proposition}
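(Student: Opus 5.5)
We follow the strategy of the smooth--divisor case \cite[\S 4.3--4.4]{IS}. Any two bi--graded--extendable pairs are connected through a chain of \emph{refinements}, and we check that the class does not change along a single refinement. The proof has three steps.

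\textbf{Step 1: refinements of a single filtration preserve the class.} Suppose $(\Wone,\Wtwo)$ and $(W'^{(1)},\Wtwo)$ are two bi--graded--extendable pairs, with $W'^{(1)}$ refining $\Wone$ (every $\Wone_a$ occurs among the $W'^{(1)}_{a'}$). We show that a connection $\nabla^{\#\prime}$ patched from the refined pair is also compatible, near every point, with the localized collection attached to the coarser pair. The graded flat connections are compatible: on $\Grr^{\Wone}$ the refined filtration induces a filtration by sub--local systems, which extends by uniqueness of extensions. On the double graded the analogous statement follows by the Zassenhaus identification. The pointwise case analysis of Theorem \ref{biconstruction} ($S\subset\{0,1\}$, $\{0,12\}$, $\{0,1,12\}$, \dots) is then repeated with both filtrations present. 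Lemma \ref{biinv}, applied on the common localization via the convex path, shows the classes agree. The same argument handles refinements of $\Wtwo$.

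\textbf{Step 2: any two pairs are connected by refinements.} Let $(\Wone,\Wtwo)$ be an arbitrary pair. We compare it with the kernel pair of Lemma \ref{kernelpair} through a Jordan--H\"older string. As in \cite[Lemma 4.12]{IS}, the sub--local systems $\Wone_a$ near $D_1$ and the kernel filtration $\ker N_1^{\bullet}$ can be related by an intermediate filtration that refines both, provided it is again extendable. A natural choice is to intersect and sum the two filtrations, e.g.\ $\Wone_a\cap\ker N_1^{c}$ ordered lexicographically. Condition (E1) passes to this common refinement because $N_1$ acts by zero on $\Grr^{\Wone}$ exactly when $N_1\Wone_a\subset\Wone_{a-1}$; this is the key characterization of extendability, proven via the centrality \eqref{centrality}. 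Since $N_2$ commutes with $N_1$ and with the monodromy, intersecting with $\ker N_1^c$ preserves compatibility with $\Wtwo$ near $Z$, giving (E2) and (E3). We first refine the first filtration and then the second, each time keeping the other fixed. This yields a string
$$(\Wone,\Wtwo)\leftarrow(\text{common ref.},\Wtwo)\rightarrow(\ker N_1^{\bullet},\Wtwo)\leftarrow\cdots\rightarrow(\ker N_1^{\bullet},\ker N_2^{\bullet}),$$
so every pair is connected to the kernel pair.

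\textbf{Step 3: the main obstacle.} The hardest point is Step 2 at the corner. We must show that the common refinement of the first filtration still satisfies (E2): the double graded of the new pair must have trivial $\gamma_1$ and $\gamma_2$ action. This amounts to showing that $N_2$ acts by zero on $\Grr\Grr$ of the new pair, given that it does so for the old one. We would attack this using the simultaneous splitting of Lemma \ref{metricsplit}/\ref{strictcorner}, working inside $\Grr^{\Wtwo}$. There, (E3) says that the induced first filtration is by sub--local systems of a system on which $N_2=0$, and strictness of intersections reduces the claim to a statement about subquotients of an $N_2$--trivial local system. If this direct argument fails, the fallback is to modify the pair only on the simple patches $V_1$, $V_2$ away from $Z$, so that the corner data are untouched there. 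We would then handle the corner by passing through the lexicographic simple patches of the Remark following Lemma \ref{biinv}, where the one--filtration argument of \cite{IS} applies verbatim.
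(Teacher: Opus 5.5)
Your overall architecture (vary one slot at a time, connect pairs through filtrations that admit common refinements, conclude with Lemma \ref{biinv}) matches the paper's. However, the two steps that carry the content do not work as written.

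\textbf{Step 2.} The single intermediate filtration you propose does not exist in general. Two filtrations have a common refinement only if their members are pairwise comparable (\cite[Lemma 4.4]{IS}, recalled in \S\ref{obstacles}). An extendable filtration need not be comparable with $\ker N_1^{\bullet}$. For example, let $N_1$ be a Jordan block $e_0\mapsto e_1\mapsto e_2\mapsto 0$ plus a line $g$ with $N_1g=0$. The filtration $\langle e_2\rangle\subset\langle e_1,e_2\rangle\subset\langle e_0,e_1,e_2\rangle\subset V$ satisfies $N_1W_a\subset W_{a-1}$, yet $\langle e_1,e_2\rangle$ and $\ker N_1=\langle e_2,g\rangle$ are incomparable. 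Your lexicographic filtration $\Wone_{a-1}+(\Wone_a\cap\ker N_1^{c})$ does satisfy (E1), but it refines only $\Wone$, so the arrow $(\mbox{common ref.},\Wtwo)\rightarrow(\ker N_1^{\bullet},\Wtwo)$ in your string is unavailable. What is needed is the Jordan--H\"older string lemma \cite[Lemma 6.2]{IS}, applied in the one--divisor situation $B=(B_1\cup B_{12})-D_2$, $B^{\ast}=B-D_1$. It yields a chain $\Wone=W(0),\dots,W(m)=\widetilde W^{(1)}$ of filtrations by sub--local systems satisfying (E1), in which only \emph{adjacent} terms admit a common refinement.

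\textbf{Step 3.} You leave (E2)/(E3) for the intermediate pairs open (after asserting them in Step 2). Your fallback is also not available: the filtrations are by sub--local systems of $L$ over the connected set $(B_1\cup B_{12})^{\ast}$, so they cannot be modified away from $Z$ while the corner data stay fixed. The missing observation is Lemma \ref{autoE23}: (E2) and (E3) hold automatically for \emph{any} pair whose two slots each satisfy (E1), independently of any previous pair. The argument runs as follows.
\begin{itemize}
\item Since $\gamma_2$ comes from $\pi_1(V_{12}^{\ast})$, every step $W_j$ of a filtration by sub--local systems over $(B_1\cup B_{12})^{\ast}$ is $\rho(\gamma_2)$--stable, and symmetrically for $\Wtwo_b$ and $\rho(\gamma_1)$.
\item Hence $(\rho(\gamma_1)-1)(W_j\cap\Wtwo_b)\subset W_{j-1}\cap\Wtwo_b$ and $(\rho(\gamma_2)-1)(W_j\cap\Wtwo_b)\subset W_j\cap\Wtwo_{b-1}$, so both loops act trivially on $\Grr\Grr$.
\item The induced filtration on $\Grr^{W}$ is $\pi_1(V_{12}^{\ast})$--stable, hence stable under the quotient $\pi_1(V_{12}-D_2)$.
\end{itemize}
This is what lets the one--divisor string lemma, which controls only (E1), be used with no corner analysis at all.

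\textbf{Step 1.} The direction is reversed on the simple patches. The connection patched from the refined filtration generally does not induce $\nabla_{1,\Grr}$ on $\Grr^{\Wone}$, so it is not compatible with the coarse collection. It is the coarse patched connection that is compatible with the refined collection: its graded connection is flat, hence preserves the induced finer filtration by sub--local systems. The paper accordingly compares both connections with the collection built from the common refinement $W'$ and its lexicographic refinements, and then applies Lemma \ref{biinv}. This is easily fixed, unlike the two gaps above.
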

\begin{proof}
We vary one slot at a time. Fix $\Wtwo$ and let $\Wone, \widetilde{W}^{(1)}$
be two choices. Run the string argument of \cite[Lemma 6.2]{IS} for the
filtrations of $L|_{(B_1\cup B_{12})^{\ast}}$: it produces a chain
$\Wone=W(0), W(1),\ldots,W(a_1)=\widetilde{W}^{(1)}$ of filtrations by
sub--local systems with extendable graded pieces, in which adjacent terms
admit a common refinement. Each intermediate filtration $W(a)$ is a
filtration of $L|_{(B_1\cup B_{12})^{\ast}}$ by sub--local systems whose
graded pieces extend across $D_1$ --- this is exactly the conclusion of the
string lemma, applied with $B:=(B_1\cup B_{12})-D_2$ and
$B^{\ast}:=B-D_1$, and it is condition (E1) of Definition
\ref{bigradedext}. By Lemma \ref{autoE23} conditions (E2) and (E3) are then
\emph{automatic} for the pair $(W(a),\Wtwo)$, which is therefore
bi--graded--extendable for every $a$; no further verification on the
intermediate filtrations is needed.
For adjacent $W(a-1), W(a)$ with common refinement $W'$, the two patched
connections for $(W(a-1),\Wtwo)$ and $(W(a),\Wtwo)$ are both compatible, near
every point, with the localized collection built from $(W',\Wtwo)$ and its
lexicographic refinements; Lemma \ref{biinv} gives equality of classes. The
$\Wtwo$--slot is treated symmetrically.
\end{proof}

\begin{remark}
In the chain produced by the Jordan--H\"older argument the verification that
each intermediate filtration is $N_2$--stable near $Z$ uses the centrality
\eqref{centrality}: socles and the inductively defined pieces are canonical,
hence stable under any automorphism of the local system commuting with the
monodromy action, and $e^{N_2}=\rho(\gamma_2)$ is such an automorphism, as is
$N_2$ itself. This is the point where having only \emph{two} commuting
branches keeps the combinatorics manageable.
\end{remark}

\subsection{Additivity, functoriality, rigidity}

\begin{proposition}\label{bifunctorial}
(a) $\what{c}\big((\rho_1\oplus\rho_2)/X\big)
=\what{c}(\rho_1/X)\cdot\what{c}(\rho_2/X)$ for the total classes.
(b) For a morphism $f:(X',D'_1\cup D'_2)\rar (X,D_1\cup D_2)$ of pairs as in
\S \ref{geometry} (each $D'_i$ mapping to $D_i$, transversal corners to
corners), $\what{c}_p(f^{\ast}\rho/X')=f^{\ast}\what{c}_p(\rho/X)$.
\end{proposition}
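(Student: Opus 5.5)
For (a), the plan is to take filtrations that respect the direct sum, so that the patched connection splits. Choose bi--graded--extendable pairs $(\Wone_{(j)},\Wtwo_{(j)})$ for $\rho_j$, $j=1,2$; for instance the kernel filtrations of Lemma \ref{kernelpair}. Their direct sums
$$
\Wone_a:=\Wone_{(1),a}\oplus\Wone_{(2),a},\qquad \Wtwo_b:=\Wtwo_{(1),b}\oplus\Wtwo_{(2),b}
$$
then form a bi--graded--extendable pair for $\rho_1\oplus\rho_2$. Conditions (E1)--(E3) hold summand by summand, because $\Grr^{\Wone}$, $\Grr\Grr$ and the induced filtrations all commute with direct sums. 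For the kernel filtrations this is automatic, since $\ker(N_1^a)$ is additive.

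By Proposition \ref{biindepfilt} we may compute $\what c_p((\rho_1\oplus\rho_2)/X)$ with this sum pair. In the construction of Theorem \ref{biconstruction} I would choose all auxiliary data compatibly:
\begin{itemize}
\item an orthogonal sum metric $h=h_1\oplus h_2$ on $\ov E_1\oplus\ov E_2$, so that the metric splittings on $V_1,V_2$ and the metric bigrading of Lemma \ref{metricsplit} on $V_{12}$ are direct sums of the splittings for each summand;
\item the same partition of unity for both summands.
\end{itemize}
With these choices each local model $\nabla_i$ is a direct sum, hence so is $\nabla^{\#}=\nabla^{\#}_1\oplus\nabla^{\#}_2$. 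Independence of choices (Theorem \ref{biconstruction}) says nothing is lost by this restriction. The Whitney product formula for Cheeger--Simons characters of a direct sum of connections \cite{Ch-Si} then gives $\what c(\nabla^{\#})=\what c(\nabla^{\#}_1)\cdot\what c(\nabla^{\#}_2)$, which is (a).

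For (b), the plan is to pull back the cover, the filtrations and the model connections along $f$. After shrinking the tubular data on $X'$ (which we may do, since the classes are independent of choices), we arrange $f(V'_\bullet)\subset V_\bullet$ for $\bullet\in\{1,2,12\}$ and $f(U')\subset U$. The latter is possible because $f^{-1}(D_i)=D'_i$ set-theoretically, which is the meaning of ``$D'_i$ mapping to $D_i$''. Then:
\begin{itemize}
\item $f^\ast\Wone$ and $f^\ast\Wtwo$ are filtrations of $f^\ast L$ by sub--local systems;
\item the graded and double graded local systems extend because their pullbacks of extensions extend;
\item (E3) is preserved by pullback.
\end{itemize}
So $(f^\ast\Wone,f^\ast\Wtwo)$ is a bi--graded--extendable pair for $f^\ast\rho$, and $f^\ast\ov E$ is the canonical extension of $f^\ast E$ (unipotent monodromy, log pullback).

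Next, $f^\ast\nabla^{\#}$ is a connection on $f^\ast\ov E$. Near every point it is compatible with the pulled-back localized collection: pulling back preserves filtrations, induced graded connections and flatness, so the case analysis in the proof of Theorem \ref{biconstruction} transfers verbatim. Its class therefore equals $\what c_p(f^\ast\rho,f^\ast\Wone,f^\ast\Wtwo)$ by Lemma \ref{biinv} applied locally as in that proof, and naturality of differential characters gives $\what c_p(f^\ast\nabla^{\#})=f^\ast\what c_p(\nabla^{\#})$.

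The main obstacle is in (b): identifying $f^\ast\ov E$ with the canonical extension requires the local monodromy of $f^\ast\rho$ around $D'_i$ to be a power $\rho(\gamma_i)^{m_i}$. This holds when the multiplicity $m_i$ of $f^\ast D_i$ along $D'_i$ is positive, and the residues become $m_iN_i$, still nilpotent. I would check this in the bidisc coordinates near $Z'$, using the corner hypothesis.
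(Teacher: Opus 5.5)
Your proposal is correct and follows the paper's own two-line proof. In both, direct sums of bi--graded--extendable pairs and of patched connections are again of that kind, and so are pullbacks along $f$ with the tubular data on $X'$ chosen to map into that of $X$; the only cosmetic difference is that you finish (a) with the Cheeger--Simons Whitney formula for an honest direct sum of connections, whereas the paper invokes the variational formula of \cite[Cor.\ 6.4]{IS}, whose cross terms vanish by strict triangularity. One small correction in (b): $f(V'_1)\subset V_1$ cannot be obtained by shrinking the data on $X'$ alone, because $V'_1$ must still contain $D'_1-{\rm int}(K'_1)$, so you need something like $f^{-1}({\rm int}\,K_1)\cap D'_1\subset{\rm int}(K'_1)\subset V'_{12}\subset f^{-1}(V_{12})$; in general this requires shrinking the corner block $K_1$ on $X$ (e.g.\ using properness of $f$), and hence independence of the class from the tubular data on $X$ as well, which the paper's phrasing tacitly assumes too.
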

\begin{proof}
Direct sums of bi--graded--extendable pairs are bi--graded--extendable and
direct sums of patched connections are patched; pullbacks likewise (choose
the tubular data on $X'$ mapping into that of $X$). The computation of the
total class on a direct sum is as in \cite[Cor.\ 6.4]{IS}: in the variational
formula the cross terms $Tr(\cdots)Tr(\cdots)$ vanish because each factor is
a trace of a strictly triangular form.
\end{proof}

\begin{proposition}[Rigidity]\label{birigidity}
Let $\rho(s)$, $s\in[0,1]$, be a continuous family of representations of
$\pi_1(U)$ with $\rho(s)(\gamma_1)$ and $\rho(s)(\gamma_2)$ unipotent for all
$s$. Then $\what{c}_p(\rho(0)/X)=\what{c}_p(\rho(1)/X)$ for $p\geq 2$.
\end{proposition}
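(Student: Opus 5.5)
Following the smooth--divisor case \cite[\S 6]{IS}, I would use the variational formula \cite[Prop.\ 2.9]{Ch-Si} along a smooth family of patched connections $\nabla^{\#}(s)$. By a standard approximation we may assume $s\mapsto\rho(s)$ is piecewise smooth; the class is locally constant under continuous variation, so it suffices to treat small intervals $[s_0,s_1]$.

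The first step is to choose, uniformly in $s$, a bi--graded--extendable pair. For kernel filtrations the rank of $\ker N_i(s)^a$ may jump in $s$, so I would not fix them. Instead I would use the Jordan--H\"older string of Proposition \ref{biindepfilt} to produce, on a neighbourhood of $s_0$, pairs $(\Wone(s),\Wtwo(s))$ of constant type that vary continuously. A natural choice is the socle filtrations of the restricted local systems along the boundary strata, or more simply the constant flag obtained by conjugating by a continuous family $g(s)$ with $g(s)^{-1}\rho(s)(\gamma_i)g(s)$ unipotent upper triangular for fixed flags. If there is no such local uniformity, I would cut $[0,1]$ at finitely many points and use Proposition \ref{biindepfilt} at the cut points. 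On each piece the simultaneous metric bigrading of Lemma \ref{metricsplit} and a fixed partition of unity give a smooth family $\nabla^{\#}(s)$ on $X$. Each member is pointwise compatible with one of the patches listed in the proof of Theorem \ref{biconstruction}, or with a lexicographic refinement of one.

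The second step is to differentiate. By the variational formula,
$$\frac{d}{ds}\what{c}_p(\nabla^{\#}(s)) = \Big[\sum c\,Tr(\dot\nabla^{\#}\wedge\Omega^{a-1})\,Tr(\Omega^{b})\cdots\Big].$$
Here every factor $Tr(\Omega^b)$ with $b\geq1$ vanishes pointwise, by Proposition \ref{binildef}(a). So only the term $Tr(\dot\nabla^{\#}\wedge\Omega^{p-1})$ survives. Near each point $\Omega$ is strictly triangular for the local filtration, which is $W^i$, $\Lex$, or $F$. The derivative $\dot\nabla^{\#}$ preserves that filtration, because the filtration is constant in $s$ after the trivialisation by $g(s)$. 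Hence $\dot\nabla^{\#}\wedge\Omega^{p-1}$ is strictly triangular whenever $p\geq 2$, and its trace vanishes. This is exactly where $p\geq2$ enters: for $p=1$ the block--diagonal part of $\dot\nabla^{\#}$, coming from the varying graded flat connections, contributes a nonzero term.

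The main obstacle I expect is the first step. The pointwise filtrations must genuinely be preserved by $\dot\nabla^{\#}$, which requires the filtrations, as subbundles of $\ov E$, to be independent of $s$ after a smooth gauge. At the corner this must hold for both filtrations simultaneously and compatibly with (E3). My first attempt would be a gauge $g(s)$ defined on the representation level, namely a continuous family of bases adapted to both $\Wone(s)$ and $\Wtwo(s)$ near $Z$. I would check this using Lemma \ref{strictcorner} and the fact, from Lemma \ref{autoE23}, that (E2) and (E3) are automatic once (E1) holds. Failing that, I would transfer the problem to the $K$--theoretic description of \S\ref{kdef}, where rigidity follows from homotopy invariance of $BGL(F[t_1,t_2])^+$.
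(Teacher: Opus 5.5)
Your second step is sound and matches the paper: once the local filtrations are made constant in $s$ by a $\mathcal{C}^1$ gauge, $\dot\nabla^{\#}$ is triangular and $\Omega_s$ is strictly triangular. The Cheeger--Simons variation integrand then vanishes for $p\geq 2$. The gap is the first step, which you flag but do not close, and none of your candidate fixes closes it.

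\begin{itemize}
\item A fixed flag making $\rho(s)(\gamma_i)$ unipotent upper triangular is not a filtration by sub--local systems. It would have to be stable under the whole local monodromy $\rho(s)\big(\pi_1((B_1\cup B_{12})^{\ast})\big)$, not only under $\rho(s)(\gamma_i)$, so it does not give a bi--graded--extendable pair.
\item Socle or Jordan--H\"older filtrations are canonical for each $s$, but their ranks jump exactly as those of $\ker N_i(s)^a$ do.
\item The string argument of Proposition \ref{biindepfilt} compares two filtrations for one fixed representation; it does not produce families.
\item After a merely piecewise smooth approximation, the set of $s$ where the ranks of $\Wone_a(s)$, $\Wtwo_b(s)$ or $\Wone_a(s)\cap\Wtwo_b(s)$ jump may be infinite. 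So ``cut at finitely many points'' is not available either.
\end{itemize}

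The missing idea is the endpoint analysis of Lemma \ref{rigiditylimits}.
\begin{itemize}
\item The paper replaces the path by a piecewise real--algebraic one inside the scheme of representations with $\rho(\gamma_i)$ unipotent. All the ranks above are then constant off a finite set.
\item At a cut point $s_i$ you cannot work on a two--sided neighbourhood, since filtrations of $L(s_i)$ need not deform to nearby $s$. However, one--sided \emph{limits} of the kernel filtrations exist in the Grassmannians.
\item Monodromy stability and condition (E1), in the form $(\rho(\gamma_1)-1)W_a\subset W_{a-1}$, are closed conditions. So the limits are filtrations of $L(s_i)$ by sub--local systems satisfying (E1), hence bi--graded--extendable by Lemma \ref{autoE23}.
\item You must also check that the intersection ranks do not drop in the limit; the paper uses boundedness of the orthogonal projectors onto the metric bigrading. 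Then the metric bigradings, and with them the Kato--transport gauge of Lemma \ref{katolemma}, extend $\mathcal{C}^1$ to the \emph{closed} subinterval.
\end{itemize}
Only then does your variational argument show the class is constant on $[s_{i-1},s_i]$, with endpoint values computed by the limit pairs. Proposition \ref{biindepfilt}, applied to the single representation $\rho(s_i)$, identifies these with $\what{c}_p(\rho(s_i)/X)$. Without limit pairs, ``apply Proposition \ref{biindepfilt} at the cut points'' has nothing to compare.

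Your $K$--theoretic fallback does not bypass this. Homotopy invariance of $BGL(F[t_1,t_2])^+$ concerns the deformation variables $t_i$, not $s$. A continuous path into $GL_r(\comx)$ with $\comx$ discrete is not a homotopy of classifying maps. Even a polynomial family in $s$ would need the bifiltration to have constant type over $\comx[s]$, which is the same problem.
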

\begin{proof}
As in \cite[Prop.\ 6.6]{IS}: replace the family by a piecewise algebraic one
inside the affine scheme of representations with
$\rho(\gamma_i)$ of trivial characteristic polynomial ($i=1,2$). By Lemma
\ref{rigiditylimits} there is a finite subdivision of $[0,1]$ such that on
each open subinterval the pair of kernel filtrations of $N_1(s),N_2(s)$,
together with \emph{all} intersection subspaces
$\Wone_a(s)\cap\Wtwo_b(s)$, has constant ranks and varies real--analytically,
and at each endpoint the filtrations admit limits in the Grassmannians which
are again filtrations by sub--local systems satisfying (E1); by Lemma
\ref{autoE23} the limit pairs are bi--graded--extendable, and they compute
the same classes as the kernel pairs at the endpoint parameters, by
Proposition \ref{biindepfilt}. On each closed subinterval, Lemma
\ref{rigiditylimits} also provides a continuous (piecewise real--analytic)
family of metric bigradings $E_{a,b}(s)$ splitting the pair, up to and
including the endpoints; the Kato transport of Lemma \ref{katolemma},
applied to the associated family of complementary projectors, produces a
$\mathcal{C}^1$ gauge transformation $g(s)$ of the underlying
$\mathcal{C}^{\infty}$ bundle carrying the family of pairs of strict
subbundles onto a \emph{constant} pair. We obtain a
$\mathcal{C}^{1}$ family of patched connections $\nabla^{\#}(s)$, all
compatible with localized collections having \emph{constant} filtrations.
Then $\frac{d}{ds}\nabla^{\#}(s)$ preserves the local filtrations
(triangular) and $\Omega_s$ is strictly triangular, so for $p\geq 2$ the
Cheeger--Simons variation integrand
$\cP\big(\frac{d}{ds}\nabla^{\#}(s),\Omega_s,\ldots,\Omega_s\big)$ vanishes
identically and the class is constant.
\end{proof}

\section{Compatibility with the Deligne Chern class}
\label{deligne}

Suppose in this section that $X$ is smooth \emph{projective}, $D=D_1\cup D_2$
a normal crossings divisor with two smooth irreducible components as above.

\begin{proposition}\label{bideligne}
The classes $\what{c}_p(\rho/X)$ lift the Deligne Chern classes
$c_p^{\cD}(\ov E)$ of the canonical extension under
$$
H^{2p-1}(X,\comx/\Z)\lrar H^{2p-1}(X,\comx/\Z(p))\lrar
H^{2p}_{\cD}(X,\Z(p)).
$$
\end{proposition}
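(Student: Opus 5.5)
We would follow the strategy of the smooth--divisor case \cite[\S 5]{IS}: express the Deligne Chern class of $\ov E$ through a smooth connection whose Chern forms vanish identically, and then recognize $\nabla^{\#}$ as such a connection. The relevant input is the Cheeger--Simons/Dupont--Hain--Zucker description of Deligne cohomology for projective $X$. For a $\mathcal{C}^\infty$ connection $\nabla'$ on $\ov E$ compatible with the holomorphic structure (its $(0,1)$ part is $\delbar_{\ov E}$) and preserving the Hodge filtration $F^0=\ov E$, the differential character $\what{c_p}(\nabla')$ maps to $c_p^{\cD}(\ov E)$ under the natural map from differential characters with $F^p$--closed curvature to $H^{2p}_{\cD}(X,\Z(p))$. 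When the Chern forms vanish, this differential character is a class in $H^{2p-1}(X,\comx/\Z)$, and the map becomes the composite displayed in the statement.

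\textbf{Step 1: reduce to a holomorphically compatible patched connection.} We would construct $\nabla^{\#}$ so that its $(0,1)$ part is $\delbar_{\ov E}$.
\begin{itemize}
\item On $V_0$ the flat $\nabla$ has this property.
\item On $V_1$, $V_2$, $V_{12}$ the filtrations $\Wone,\Wtwo$ and their intersections are holomorphic subbundles of $\ov E$, being canonical extensions of flat subbundles. The graded connections $\nabla_{1,\Grr}$ and $\nabla_{12,\Grr\Grr}$ are holomorphic flat connections on the holomorphic graded bundles.
\item The metric splitting is only $\mathcal{C}^\infty$, so we would correct it: set $\nabla_{12}:=\nabla^{split}_{12}+\beta$, where $\beta$ is the $(0,1)$--form measuring the failure of the splitting to be holomorphic. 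This $\beta$ is strictly triangular for the bifiltration, since $\delbar$ preserves the holomorphic subbundles $\Wone_a\cap\Wtwo_b$ and induces $\delbar$ on $\Grr\Grr$.
\end{itemize}
The corrected $\nabla_{12}$ still preserves both filtrations and induces $\nabla_{12,\Grr\Grr}$, so it remains compatible with the corner patch. We do the same on $V_1$ and $V_2$. The $(0,1)$ condition is affine, so it survives the partition of unity. Lemma \ref{biinv} then gives that the class is unchanged, and the pointwise triangularity argument of Theorem \ref{biconstruction} goes through unchanged.

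\textbf{Step 2: compare with Deligne cohomology.} For such a $\nabla'$, the Dupont--Hain--Zucker theorem identifies the image of $\what{c_p}(\nabla')$ in $H^{2p}_{\cD}(X,\Z(p))$ with $c_p^{\cD}(\ov E)$. This identification holds for any connection compatible with the holomorphic structure, independently of the choice of such connection. Because $\nabla^{\#}$ has all Chern forms identically zero (Theorem \ref{biconstruction}), its character sits in $H^{2p-1}(X,\comx/\Z)$. It then maps to $H^{2p}_{\cD}$ through $H^{2p-1}(X,\comx/\Z(p))$, via the connecting map of $0\to\Z(p)\to\Omega^{<p}\to\ldots$. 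Here we would check the compatibility of the $\Z$ versus $\Z(p)$ normalizations exactly as in \cite{IS}.

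\textbf{Main obstacle.} We expect the difficulty to be the Dupont--Hain--Zucker independence statement. Its proof usually uses a path of $(1,0)$--type connections, and the secondary term must be shown to have vanishing image in $H^{2p}_{\cD}$. This image is the class of $\int \cP(\nabla'_t,\Omega_t,\ldots)$ modulo $F^p$. Since the holomorphic structure is fixed, $\nabla'_t$ is of type $(1,0)$ and $\Omega_t$ lies in $A^{2,0}+A^{1,1}$, so the transgression lies in $F^p$. We would first try this type count directly. If it fails at the corner, we would instead interpolate between $\nabla^{\#}$ and a holomorphic $(1,0)$ metric connection and use Proposition \ref{binildef} on the segment.
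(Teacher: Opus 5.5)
Your proposal is correct and is essentially the paper's argument. Your corrected patched connection $\sum_i\zeta_i(\nabla_i+\beta_i)$, with $\beta_i=\delbar-\nabla_i^{0,1}$, is literally $\delbar+(\nabla^{\#})^{1,0}$, which is the connection $\nabla_0$ that the paper defines in one stroke. The paper then uses the same two facts you do: the quotient holomorphic structures on the (double) graded pieces agree with the $(0,1)$--parts of the extended flat connections, so $\nabla_0$ stays compatible with the localized collection, and then Lemma \ref{biinv} together with Dupont--Hain--Zucker (simply cited, so your ``main obstacle'' does not arise) finishes the proof.
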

\begin{proof}
The argument of \cite[\S 5]{IS} goes through word for word, because it is
local--convex in nature. The local filtrations occurring in the localized
collection of Theorem \ref{biconstruction} --- $\Wone$, $\Wtwo$, their
intersections and lexicographic refinements --- are filtrations of
$(\ov E,\delbar)$ by \emph{holomorphic} subbundles (canonical extensions of
flat subbundles), and the holomorphic structures on the graded and double
graded pieces agree with the $(0,1)$--parts of the extended flat connections
(uniqueness of the canonical extension). Define
$\nabla_0^{0,1}:=\delbar$, $\nabla_0^{1,0}:=(\nabla^{\#})^{1,0}$. As in
\cite[\S 5]{IS}, $\nabla_0$ is compatible with $\delbar$, so by
Dupont--Hain--Zucker its differential character projects to the Deligne Chern
class of $(\ov E,\delbar)$ in $DHZ^{2p-1,2p}$; on the other hand $\nabla_0$
preserves the same local (bi)filtrations as $\nabla^{\#}$ and induces the
same flat connections on the local graded pieces (their $(0,1)$--parts agree
with $\delbar$ there), so $\nabla_0$ has strictly triangular curvature, and
the affine path $t\nabla^{\#}+(1-t)\nabla_0$ together with Lemma \ref{biinv}
gives $\what{c}_p(\nabla_0)=\what{c}_p(\nabla^{\#})$ in
$H^{2p-1}(X,\comx/\Z)$.
\end{proof}

\section{The cubical deformation construction in $K$--theory:
$BGL(F[t_1,t_2])^+$}
\label{kdef}

We now give the analogue of \cite[\S 7]{IS}: a construction of the regulator
classes via $K$--theory, adapted to the cubical decomposition of
\S \ref{cubicaldecomp}. Let $F$ be a field (later a number field or $\comx$).

\subsection{Two--variable deformation theorem and the cubical universal
space}

The polynomial ring $F[t_1,t_2]$ is regular, as is $F[t_i]$. The fundamental
homotopy invariance theorem of Quillen, applied twice, gives that all maps in
$$
BGL(F)^+ \rar BGL(F[t_1])^+ \rar BGL(F[t_1,t_2])^+
\xrightarrow{\;e_{(s_1,s_2)}\;} BGL(F)^+ ,
\qquad (s_1,s_2)\in\{0,1\}^2,
$$
where $e_{(s_1,s_2)}$ is evaluation $t_i\mapsto s_i$, induce isomorphisms on
homology (and are homotopy equivalences of $K$--theory spaces).

Let $\square$ denote the poset of faces of the square $[0,1]^2$ (four
vertices $v_{s_1s_2}$, four edges, one $2$--face). Define the
\emph{cubical deformation space}
$$
BGL(F)^+_{{\rm def},2}
$$
as the homotopy colimit of the $\square$--diagram which places: a copy of
$BGL(F)^+$ at each vertex; $BGL(F[t_1])^+$ at the two horizontal edges and
$BGL(F[t_2])^+$ at the two vertical edges; $BGL(F[t_1,t_2])^+$ at the
$2$--face; with structure maps the evaluations $t_i\mapsto 0,1$ of the
deleted variables. Concretely, $BGL(F)^+_{{\rm def},2}$ is obtained by gluing
$BGL(F[t_1,t_2])^+\times[0,1]^2$ to the edge cylinders
$BGL(F[t_i])^+\times[0,1]$ and the vertex copies of $BGL(F)^+$ along the
evaluation maps; it contains $BGL(F)^+_{\rm def}$ of \cite[\S 7.1]{IS} as the
union of the pieces over each edge of the square.

\begin{lemma}\label{cubicaldefisom}
Each of the four composites
$BGL(F)^+ = (\m{vertex copy}) \hookrightarrow BGL(F)^+_{{\rm def},2}$ induces
an isomorphism on homology, hence on cohomology with arbitrary coefficients:
$$
H^{\ast}\big(BGL(F)^+_{{\rm def},2},k\big)\cong H^{\ast}\big(BGL(F)^+,k\big)
= H^{\ast}\big(BGL(F),k\big).
$$
\end{lemma}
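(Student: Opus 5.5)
The plan is to show that the whole $\square$--diagram is homologically constant, and then use the fact that a homotopy colimit of a diagram on a contractible indexing category with all maps homology equivalences is homology equivalent to any of its values. By Quillen's homotopy invariance (applied once and twice), every structure map in the diagram is a homology isomorphism: the evaluations $BGL(F[t_1,t_2])^+\rar BGL(F[t_i])^+$ and $BGL(F[t_i])^+\rar BGL(F)^+$. Indeed, each is a retraction of the corresponding inclusion of constants, and that inclusion is a homology isomorphism. So all structure maps are homology isomorphisms, with inverses on homology induced by the constant inclusions.

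Next we compare the diagram to the constant diagram. The constant inclusions $BGL(F)^+\rar BGL(F[t_1])^+\rar BGL(F[t_1,t_2])^+$ commute with all evaluation maps: evaluating a constant polynomial gives the constant back. They therefore define a natural transformation from the constant $\square$--diagram with value $BGL(F)^+$ to our diagram. This transformation is an objectwise homology isomorphism. Homotopy colimits preserve objectwise homology equivalences, which one sees from the Bousfield--Kan spectral sequence $E^2_{pq}=\mathrm{colim}_p H_q\Rightarrow H_{p+q}(\mathrm{hocolim})$, or more concretely from the iterated Mayer--Vietoris sequences of the explicit gluing description. Hence the induced map
$$
\mathrm{hocolim}_{\square}\,BGL(F)^+ \;=\; BGL(F)^+\times|N\square| \lrar BGL(F)^+_{{\rm def},2}
$$
is a homology isomorphism. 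Since $|N\square|$ is contractible (it is the barycentric subdivision of the square), the left side is homotopy equivalent to $BGL(F)^+$, and the inclusion of each vertex copy, $BGL(F)^+\times\{v_{s_1s_2}\}$, is a homotopy equivalence onto it. The composite of this inclusion with the map above is exactly the vertex inclusion of the statement. So each vertex inclusion is a homology isomorphism, and cohomology with any coefficients $k$ follows by universal coefficients. Finally, $H^*(BGL(F)^+,k)=H^*(BGL(F),k)$ because the plus construction is acyclic.

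The main point to handle carefully is the invariance of the homotopy colimit under objectwise homology equivalences. This should be justified by hand via the explicit cell-wise gluing. First glue the $2$--face cylinder $BGL(F[t_1,t_2])^+\times[0,1]^2$ to its boundary, which is itself the homotopy colimit over the boundary of the square, built from the four edge cylinders and the vertices. Then apply Mayer--Vietoris and the five lemma, comparing with the analogous gluing of $BGL(F)^+\times[0,1]^2$ along $BGL(F)^+\times\partial[0,1]^2$. The edge stage is exactly the one--variable case of \cite[\S 7.1]{IS}, so that case can be invoked directly.
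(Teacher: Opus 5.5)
Your argument is correct, but it is organized differently from the paper's. The paper (Lemma \ref{cubicalMVdetails}) works skeleton by skeleton on the concrete gluing model:
\begin{itemize}
\item it first computes the boundary ring explicitly, $H_n(X_1)\cong H_n\oplus H_{n-1}$, i.e.\ the homology of $BGL(F)^+\times S^1$;
\item it then shows, by a five--lemma comparison of Mayer--Vietoris sequences, that the attaching map $Y\times\partial\square\rar X_1$ is a homology isomorphism;
\item finally it reads off $H_n(X_2)\cong H_n$ as a cokernel and identifies the surviving summand with a vertex copy by a diagram chase, adding a van Kampen remark on $\pi_1$.
\end{itemize}

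You instead map the constant $\square$--diagram on $BGL(F)^+$ into the given one via the inclusions of constants. This is an objectwise homology equivalence, and homotopy colimits are invariant under such maps (Bousfield--Kan spectral sequence, or the same Mayer--Vietoris/five--lemma comparison done relative to $BGL(F)^+\times\square$).

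What your route buys:
\begin{itemize}
\item The identification with each vertex copy is automatic, since the vertex inclusion factors through the homotopy equivalence $BGL(F)^+\times\{v\}\rar BGL(F)^+\times|N\square|$.
\item You never need to compute $H_*(X_1)$.
\item You sidestep a slip in the paper's main--text sketch, which asserts that a vertex copy includes into the $1$--skeleton part by a homology isomorphism. That is false, as Step 2 of the appendix shows.
\item The argument extends verbatim to $n$--cubes, or to any indexing category with contractible nerve.
\end{itemize}
The paper's version, in exchange, is fully explicit and uses no homotopy--colimit machinery.

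One small imprecision in your last paragraph: the edge stage is not literally the one--variable case of \cite[\S 7.1]{IS}, because the four edge cylinders close up into a ring. What you actually need there is that the map $BGL(F)^+\times\partial\square\rar X_1$ is a homology isomorphism. This follows from the one--variable case on each edge plus one more Mayer--Vietoris/five--lemma step to close the ring.
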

\begin{proof}
Filter $BGL(F)^+_{{\rm def},2}$ by the skeleta of the square: the union of
the pieces over the vertices and edges is, up to homotopy, a ``square of
spaces'' built from copies of $BGL(F)^+$ glued along homology equivalences
through cylinders on $BGL(F[t_i])^+$; iterated Mayer--Vietoris with the
deformation theorem (exactly the argument of \cite[\S 7.1]{IS}, used four
times around the boundary of the square) shows the inclusion of any vertex
copy into this $1$--skeleton part is a homology isomorphism. Gluing in the
face piece $BGL(F[t_1,t_2])^+\times[0,1]^2$ along
$BGL(F[t_1,t_2])^+\times\partial([0,1]^2)$ is one more Mayer--Vietoris with
both sides homology--equivalent to $BGL(F)^+$ via the two--variable
deformation theorem. (Van Kampen gives the statement on fundamental groups,
all of which are the abelian group $K_1(F)$ after plus construction, so the
homology isomorphisms promote to weak equivalences if desired.) The full
Mayer--Vietoris bookkeeping --- the explicit model of
$BGL(F)^+_{{\rm def},2}$ as a homotopy colimit over the face poset of
$\square$, the two boundary computations, and the final excision step ---
is carried out in Lemma \ref{cubicalMVdetails} of the appendix.
\end{proof}

\subsection{$2$--cubical deformation patching data}

By Lemma \ref{cubicalpushout}, $X$ is the homotopy colimit of a
$\square$--diagram of spaces $\{X_{\sigma}\}_{\sigma\in\square}$ with
$X_{2\m{-face}} \simeq S_{12}$. Define a \emph{$2$--cubical deformation
patching datum} to be a collection of representations
$$
\eta_{\sigma} : \pi_1(X_{\sigma}) \lrar GL\big(F[t_{\sigma}]\big),
\qquad
F[t_{\sigma}] :=
\begin{cases}
F & \sigma \m{ a vertex},\\
F[t_i] & \sigma \m{ an edge in the } t_i\m{--direction},\\
F[t_1,t_2] & \sigma \m{ the 2--face},
\end{cases}
$$
compatible with the face maps of $\square$: restricting $\eta_{\sigma}$ along
$\pi_1$ of the inclusion of a boundary piece and evaluating the deleted
variable at the appropriate endpoint $0$ or $1$ yields the corresponding
$\eta_{\tau}$. By functoriality of homotopy colimits, such a datum induces a
well--defined homotopy class of maps
$$
X \lrar BGL(F)^+_{{\rm def},2},
$$
hence, by Lemma \ref{cubicaldefisom}, pullback maps
$H^{\ast}(BGL(F),k)\rar H^{\ast}(X,k)$. For $F\subset\comx$, pulling back the
universal regulator class in $H^{2p-1}(BGL(F),\comx/\Z)$ defines
$$
\what{c}_p^{\rm def}(\{\eta_{\sigma}\}) \in H^{2p-1}(X,\comx/\Z),
$$
with imaginary part the \emph{deformation volume regulator}
$Vol^{\rm def}_{2p-1}(\{\eta_{\sigma}\})\in H^{2p-1}(X,\R)$.

\subsection{The $2$--cubical datum attached to
$(\rho,\Wone,\Wtwo)$}\label{cubicalfromrho}

Suppose $\rho:\pi_1(U)\rar GL_r(F)$ is unipotent at infinity and
$(\Wone,\Wtwo)$ is a bi--graded--extendable pair defined over $F$ (e.g.\ the
kernel filtrations). Choose a simultaneous splitting of the corner
bifiltration of the fiber, $V=\bigoplus_{a,b}V_{a,b}$, compatible with $\Wone$
near $D_1$ and with $\Wtwo$ near $D_2$, and a compatible basis giving
$GL(V)\cong GL_r(F)$. Define the commuting one--parameter scalings
$$
\psi^1_{t_1}\,:=\, t_1^{\,-a} \m{ on } V_{a,\bullet},
\qquad
\psi^2_{t_2}\,:=\, t_2^{\,-b} \m{ on } V_{\bullet,b} .
$$
Conjugation $M\mapsto \psi^i_{t_i} M (\psi^i_{t_i})^{-1}$ multiplies the
block $Hom(V_{a,b},V_{a',b'})$ by $t_1^{\,a-a'}$ (resp.\ $t_2^{\,b-b'}$);
on matrices preserving $\Wone$ (resp.\ $\Wtwo$, resp.\ both) one has
$a'\le a$ (resp.\ $b'\le b$) on the nonzero blocks, so all exponents are
non--negative and the conjugated family extends
polynomially to $t_i=0$, multiplicatively, and at $t_i=0$ projects onto the
$\Wone$-- (resp.\ $\Wtwo$--, resp.\ double--) block diagonal: this is the
deformation of a filtration to its grading from \cite[\S 7.3]{IS}, in each
variable separately, and the two commute.

Set, with the notation of \S \ref{cubicaldecomp}:
\begin{itemize}
\item vertex $(1,1)$: $\eta_{11}:=\rho$ as a representation of
$\pi_1(Y_{11})=\pi_1(U)$;
\item vertex $(0,1)$: $\eta_{01}:= \Grr^{\Wone}(\rho)$, the extension to
$\pi_1(Y_{01})$ ($\simeq\pi_1$ of the collar of $D_1- Z$) of the
associated--graded of $\rho|_{\pi_1}$, transported to $V$ by the splitting
(this uses (E1) and surjectivity of $\pi_1(\partial)\rar\pi_1(\m{collar})$);
\item vertex $(1,0)$: symmetrically $\eta_{10}:=\Grr^{\Wtwo}(\rho)$;
\item vertex $(0,0)$: $\eta_{00}:=\Grr\Grr(\rho)$, the double graded
representation of $\pi_1(Y_{00})\;(\simeq \pi_1(B_{12}))$, using (E2);
\item edges: the one--variable conjugation deformations: e.g.\ on the edge
between $(1,1)$ and $(0,1)$, the representation
$\m{Ad}(\psi^1_{t_1})\big(\rho|_{\pi_1(S_1\m{--collar})}\big)
\in GL_r(F[t_1])$, and on the edge between $(0,1)$ and $(0,0)$ the
representation
$\m{Ad}(\psi^2_{t_2})\big(\Grr^{\Wone}(\rho)|_{\pi_1}\big)\in GL_r(F[t_2])$
(well defined since by (E3) the representation $\Grr^{\Wone}(\rho)$ preserves
the induced $\Wtwo$--filtration near $Z$); the other two edges symmetrically;
\item the $2$--face: $\eta_{C}:=
\m{Ad}\big(\psi^1_{t_1}\psi^2_{t_2}\big)\big(\rho|_{\pi_1(S_{12})}\big)
\;\in\; GL_r\big(F[t_1,t_2]\big)$,
well defined because $\rho(\pi_1(S_{12}))=\rho(\pi_1(V_{12}^{\ast}))$
preserves both filtrations by \eqref{centrality}, so all matrix entries are
scaled by non--negative powers of $t_1$ and of $t_2$.
\end{itemize}
The boundary compatibilities hold because the scalings commute and because
specializing $t_i=0$ produces the corresponding (single or double) block
diagonal, i.e.\ the corresponding associated graded representation; e.g.
$$
e_{t_1=0}\big(\eta_C\big)
= \m{Ad}(\psi^2_{t_2})\Big(\m{Ad}(\psi^1_{0})\big(\rho|\big)\Big)
= \m{Ad}(\psi^2_{t_2})\big(\Grr^{\Wone}(\rho)|\big),
$$
the edge datum between $(0,1)$ and $(0,0)$. We thus obtain a $2$--cubical
deformation patching datum and classes
$\what{c}^{\rm def}_p(\rho,\Wone,\Wtwo)\in H^{2p-1}(X,\comx/\Z)$.

\subsection{Comparison with the patched connection; resolution of
\cite[Remark 6.9]{IS}}

Take $F=\comx$. Build a connection $\nabla^{\rm def}$ on the
$\mathcal{C}^{\infty}$ bundle underlying $\ov E$, using the block
decomposition of \S \ref{cubicaldecomp}: on the four vertex blocks, the flat
connections of $\eta_{11},\eta_{01},\eta_{10},\eta_{00}$; on an edge collar
$S\times[0,1]$, the connection which on each slice $S\times\{t\}$ is the flat
connection of the evaluated representation and has no $dt$--component; on the
corner collar $C= S_{12}\times[0,1]^2$, the connection which on each slice
$S_{12}\times\{(t_1,t_2)\}$ is the flat connection of
$e_{(t_1,t_2)}(\eta_C)$ and has no $dt_1$-- or $dt_2$--component. The
boundary compatibilities make $\nabla^{\rm def}$ a global smooth connection.

\begin{lemma}\label{triangulardef}
On the corner collar $C$, the connection $\nabla^{\rm def}$ preserves the
pulled--back bifiltration $(\Wone,\Wtwo)$ and induces on the double graded
the \emph{constant} flat connection of $\eta_{00}=\Grr\Grr(\rho)$
(pulled back from $S_{12}$). Consequently the curvature of
$\nabla^{\rm def}$ --- including its $dt_i\wedge(\m{base})$ and
$dt_1\wedge dt_2$ components --- strictly decreases the total degree
filtration \eqref{totaldeg}, and \emph{all} Chern forms of
$\nabla^{\rm def}$ vanish identically on $X$, for every $p\geq 1$.
\end{lemma}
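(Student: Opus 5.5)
We work in a frame on the corner collar $C=S_{12}\times[0,1]^2$ adapted to the fixed splitting $V=\bigoplus_{a,b}V_{a,b}$ chosen in \S\ref{cubicalfromrho}. Since $C$ deformation retracts to $S_{12}$ and $\eta_C$ is a representation of $\pi_1(S_{12})$ into $GL_r(\comx[t_1,t_2])$, the bundle on $C$ is the pullback of a bundle on $S_{12}$. Concretely, $\nabla^{\rm def}$ is given on the universal cover $\widetilde S_{12}\times[0,1]^2$ by the trivial connection $d$ on $\widetilde S_{12}\times[0,1]^2\times V$, with descent datum $e_{(t_1,t_2)}(\eta_C)=\m{Ad}(\psi^1_{t_1}\psi^2_{t_2})(\rho|)$. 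To make the $t$-dependence explicit, we fix a trivialization of the flat bundle of $\rho|_{S_{12}}$ over a good cover of $S_{12}$ with locally constant transition matrices $g_{\alpha\beta}\in\rho(\pi_1(V_{12}^\ast))$. The slice connection of $\eta_C$ then has the same local frames, with transition functions $g_{\alpha\beta}(t)=\m{Ad}(\psi^1_{t_1}\psi^2_{t_2})g_{\alpha\beta}$. These are locally constant in the $S_{12}$ directions and polynomial in $t$, and $\nabla^{\rm def}=d$ in each chart, with no $dt_i$-component by construction.

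The key point is that the transition functions vary with $t$, so $d$ in one chart differs from $d$ in another by $g_{\alpha\beta}(t)^{-1}d_t g_{\alpha\beta}(t)$. To obtain an honest global connection form, I would therefore glue with a partition of unity $\{\chi_\alpha\}$ on $S_{12}$. In chart $\alpha$ this gives
$$\nabla^{\rm def}=d+A_\alpha,\qquad A_\alpha=\sum_\beta\chi_\beta\, g_{\alpha\beta}(t)\,d_t\big(g_{\alpha\beta}(t)^{-1}\big).$$
This matches the paper's description, which is equivalent up to this standard gluing: flat on slices, and with the $dt$-part determined by the gluing. Since each $g_{\alpha\beta}$ preserves both filtrations by \eqref{centrality}, the conjugated $g_{\alpha\beta}(t)$ is, for every $t$ including the boundary values $t_i=0$, block upper triangular for the bigrading. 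Its double block diagonal equals that of $g_{\alpha\beta}$ and is independent of $t$. Hence $d_t g_{\alpha\beta}(t)$ is \emph{strictly} lower in the bifiltration, mapping $V_{a,b}$ into $\bigoplus_{(a',b')\neq(a,b),\,a'\le a,\,b'\le b}V_{a',b'}$. The same holds for $g\,d_t(g^{-1})$, because products with triangular matrices preserve this shape.

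It follows that $A_\alpha$ preserves every $\Wone_a\cap\Wtwo_b$ and induces zero on $\Grr\Grr$. Thus $\nabla^{\rm def}$ preserves the constant bifiltration, and on $\Grr\Grr$ it induces the connection glued from $d$ with transition functions $\m{diag}(g_{\alpha\beta})$, which is the pullback of the flat connection of $\eta_{00}=\Grr\Grr(\rho)$ from $S_{12}$. This proves the first assertion on $C$. Proposition \ref{binildef}(a) then applies verbatim on $C$, with the corner patch given by the constant bifiltration and this constant flat graded connection. The argument of Proposition \ref{binildef}(a) is purely algebraic (a preserved filtration with flat induced graded), so the curvature, including its $dt_i\wedge(\m{base})$ and $dt_1\wedge dt_2$ components, strictly decreases the bifiltration and hence $F_c$. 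Part (b) then makes all invariant polynomials vanish on $C$.

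To conclude globally, I would run the same argument on the edge collars with one variable, where it is the triangularity from \cite[\S 7.3]{IS}. On the vertex blocks $\nabla^{\rm def}$ is flat. Vanishing of Chern forms is a pointwise statement, and the pieces cover $X$, so all Chern forms vanish for every $p\geq1$.

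The main obstacle I expect is the smooth matching across the boundaries of $C$ with the edge collars. There the constant bifiltration on $C$ restricts to the filtration used on the edge collar, for example $\Wone$ together with the induced $\Wtwo$ on $\Grr^{\Wone}$ via (E3). One must check that a single local filtration is preserved near each boundary point. I would handle this with the lexicographic refinement $\Lex(\Wone,\Wtwo)$, exactly as in the $S\subset\{0,1,12\}$ case of Theorem \ref{biconstruction}.
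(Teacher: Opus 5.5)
Your proof is correct. Its key input is the same as the paper's: conjugation by $\psi^1_{t_1}\psi^2_{t_2}$ leaves the double block diagonal unchanged, so every $t$--derivative strictly lowers the bifiltration. What differs is the gauge.

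\emph{The paper's gauge.} The paper works in the fixed bigraded trivialization and treats the slice connections as $d+A(t_1,t_2)$ on one fixed bundle with $t$--independent gluing. Implicitly, the slice flat structures are realized on a single $\mathcal{C}^{\infty}$ bundle by conjugating with the scaling gauge of a $\mathcal{C}^{\infty}$ bigrading; this is how ``no $dt$--component'' is to be read. All $t$--dependence then sits in the base--directional form $A(t)$. The curvature splits as the slice curvature, which is zero, plus $dt_i\wedge\partial_{t_i}A$, which is strictly triangular because the diagonal of $A$ is constant. In particular there is no $dt_1\wedge dt_2$ term.

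\emph{Your gauge.} You use slice--flat frames, so the $t$--dependence sits in the transition functions $\m{Ad}(\psi^1_{t_1}\psi^2_{t_2})g_{\alpha\beta}$. You rightly note that ``no $dt$--component'' is then frame--dependent, and you make $\nabla^{\rm def}$ globally well defined by averaging $g_{\alpha\beta}\,d_t(g_{\alpha\beta}^{-1})$ with a partition of unity. In this gauge the whole connection form is strictly lowering. Preservation of the bifiltration and flatness of the induced connection on $\Grr\Grr$ are therefore immediate. The curvature statement, which now does contain $dt_1\wedge dt_2$ terms, follows from the purely algebraic argument of Proposition \ref{binildef}(a).

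\emph{Comparison.} Your route needs no $\mathcal{C}^{\infty}$ bigrading of the bundle over $S_{12}$, only the splitting of the single fiber $V$. For this, the local flat frames should be obtained by transport from the base point, so that the $g_{\alpha\beta}$ really are monodromies preserving both fixed subspaces. The paper's route is shorter and keeps the literal description of $\nabla^{\rm def}$. The two constructions give different connections on $C$, but both satisfy the lemma and the compatibilities used afterwards.

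\emph{Your last paragraph is not needed for this lemma.} Vanishing of Chern forms is pointwise, and your computation on the closed collars already covers boundary points. The lexicographic refinement matters only when comparing with $\nabla^{\#}$ in Lemma \ref{bicompare2}. What gluing across faces actually requires is the standard reparametrization of collar coordinates by functions flat at the endpoints. This is even more necessary in your gauge, since your $dt$--components do not vanish on the faces of the square. It belongs to the construction of $\nabla^{\rm def}$, not to this lemma.
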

\begin{proof}
Work in the fixed bigraded trivialization of \S \ref{cubicalfromrho}, pulled
back to $C$. Every representation
$e_{(t_1,t_2)}(\eta_C)=\m{Ad}(\psi^1_{t_1}\psi^2_{t_2})(\rho|)$ preserves the
\emph{same, constant} flag pair $(\Wone,\Wtwo)$ (conjugation by the
bigrading torus preserves the block structure), so the connection form
$A(t_1,t_2)$ of the slice connections is block upper triangular for the
bifiltration, with values in $End_{\Wone}\cap End_{\Wtwo}$, for every
$(t_1,t_2)\in[0,1]^2$. Moreover the block--\emph{diagonal} part of
$\m{Ad}(\psi^1_{t_1}\psi^2_{t_2})(g)$ is independent of $(t_1,t_2)$ (the
scalings act trivially on the diagonal blocks), so the block diagonal part of
$A(t_1,t_2)$ is the constant flat connection form of $\Grr\Grr(\rho)$.
Therefore $\nabla^{\rm def}= d + A(t_1,t_2)$ (no $dt$--components) preserves
the bifiltration, and the induced connection on each double graded piece is
the constant flat one; its curvature
$$
\Omega \;=\; dA + A\wedge A
\;=\; \big( d_{S_{12}}A + A\wedge A \big)
\;+\; dt_1\wedge \partial_{t_1}A \;+\; dt_2\wedge\partial_{t_2}A
$$
takes values in $End$--preserving the bifiltration, and its image in
$End(\Grr\Grr)$ vanishes: the first bracketed term is the slice curvature,
zero since each slice connection is flat; and
$\partial_{t_i}A$ has vanishing block diagonal since the diagonal of $A$ is
constant in $t$. (Note that no separate argument is needed for a
$dt_1\wedge dt_2$ component: $\Omega$ has none, since $A$ has no
$dt$--component; the point of the computation is that even the
$dt_i$--components of $\Omega$ are \emph{strictly} triangular.) Hence
$\Omega$ strictly decreases the total degree filtration on $C$. On the vertex
blocks $\nabla^{\rm def}$ is flat; on the edge collars the same argument with
a single scaling gives strict triangularity for the corresponding single
filtration, recovering \cite[\S 7.4]{IS} without any codimension count.
By Proposition \ref{binildef}(b), all Chern forms vanish identically on $X$.
\end{proof}

This is the promised resolution of obstacle (B) of \S \ref{obstacles}: the
two--parameter deformation used here is triangular, so the vanishing of the
Chern forms is an algebraic, pointwise fact, not a consequence of the
slice--codimension being one.

\begin{lemma}\label{bicompare1}
$\what{c}_p(\nabla^{\rm def}) =
\what{c}^{\rm def}_p(\rho,\Wone,\Wtwo)$ in $H^{2p-1}(X,\comx/\Z)$ for
$p\geq 1$.
\end{lemma}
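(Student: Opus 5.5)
We follow the single-divisor comparison of \cite[\S 7.4]{IS}, replacing the square homotopy pushout by the cubical one of Lemma \ref{cubicalpushout}. The idea is to realize $\nabla^{\rm def}$ as the pullback of a ``universal'' connection on a bundle over a simplicial/cubical model of $BGL(\comx)^+_{{\rm def},2}$, and to compare the Cheeger--Simons character of that universal connection with the universal regulator class of $H^{2p-1}(BGL(\comx),\comx/\Z)$.

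\emph{Step 1 (universal model).} Replace each $BGL(\comx[t_\sigma])$ by its bar model and form the cubical homotopy colimit $\mathcal{B}$ of the diagram of $BGL(\comx[t_\sigma])\times\sigma$, before the plus construction. On the piece $BGL(\comx[t_1,t_2])\times[0,1]^2$ there is the tautological flat bundle over $BGL(\comx[t_1,t_2])\times\{(s_1,s_2)\}$ obtained by evaluation $t_i\mapsto s_i$. We assemble these slice-flat bundles, with no $ds_i$-components, into a connection $\nabla^{\rm univ}$ on a bundle over $\mathcal{B}$, in the sense of Dupont's simplicial forms. The classifying map $X\to\mathcal{B}$ of the datum $\{\eta_\sigma\}$ can be chosen block by block, compatibly with the decomposition of \S\ref{cubicaldecomp}. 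With that choice $\nabla^{\rm def}$ is, up to a gauge on each block, the pullback of $\nabla^{\rm univ}$.

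\emph{Step 2 (vanishing of universal Chern forms).} We show that every Chern form of $\nabla^{\rm univ}$ vanishes on the face and edge pieces. The argument is Lemma \ref{triangulardef} applied universally. The face family $\m{Ad}(\psi^1_{t_1}\psi^2_{t_2})$ preserves a constant bifiltration and has $t$-independent block diagonal, so the curvature, including the $ds_i$-terms, is strictly triangular. This is the step where the point-set argument must be checked on simplices rather than on $X$. Granting it, $\nabla^{\rm univ}$ defines a class $\what{c}_p(\nabla^{\rm univ})\in H^{2p-1}(\mathcal{B},\comx/\Z)$, and naturality of Cheeger--Simons characters gives $\what{c}_p(\nabla^{\rm def})=f^\ast\what{c}_p(\nabla^{\rm univ})$.

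\emph{Step 3 (identification).} By Lemma \ref{cubicaldefisom}, restriction to a vertex, say $v_{11}$, is injective on $H^{2p-1}(-,\comx/\Z)$. On the vertex copy of $BGL(\comx)$ the connection $\nabla^{\rm univ}$ is the tautological flat connection. Its Cheeger--Simons class is by definition the universal regulator, in the normalization of \cite[\S 7]{IS}. Hence $\what{c}_p(\nabla^{\rm univ})$ is the pullback of the universal regulator along the homology equivalence, and the lemma follows.

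\emph{Main obstacle.} We expect the main difficulty to be Step 2 combined with the injectivity in Step 3. One must make sense of $\mathcal{B}$ as a space carrying differential characters; Dupont's simplicial de Rham theory handles this. One must also check that the class restricted to a vertex really detects the whole class in $\comx/\Z$-cohomology. Lemma \ref{cubicaldefisom} gives this only after the plus construction, so we would use the fact that $BGL\to BGL^+$ is a homology isomorphism to transfer it. If the universal construction proves awkward, the fallback is to compare on $X$ directly. In that approach we would first homotope the classifying map so that it is the constant vertex map on the vertex blocks and a product of slice maps on the collars, and then argue as in \cite[Lemma 7.6]{IS} one collar at a time, finishing with the corner collar.
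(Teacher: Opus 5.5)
Your route is the paper's own. You reduce to the universal cubical space, use that restriction to a vertex is injective there (Lemma \ref{cubicaldefisom}), and identify the flat Cheeger--Simons class with the regulator on the vertex pieces. The gap is Step 2: the whole argument rests on it, and it cannot be granted.

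\textbf{Why Step 2 fails.} On the face piece $Y\times[0,1]^2$, with $Y=BGL(\comx[t_1,t_2])$, the bundle carries the tautological family of the \emph{whole} group $GL(\comx[t_1,t_2])$. It does not carry the conjugated family $\m{Ad}(\psi^1_{t_1}\psi^2_{t_2})(\rho|)$. Lemma \ref{triangulardef} is a property of that particular datum and has no universal counterpart.
\begin{itemize}
\item Since the slice curvature is zero, every term of $\Omega$ contains $ds_1$ or $ds_2$. So the universal $c_p$--forms do vanish for $p\geq 3$ by a codimension count. On the edge pieces they vanish for $p\geq 2$ for the same reason, not by triangularity.
\item For $p=2$, a slice--flat connection without $ds$--components (as you propose) has $\mathrm{Tr}(\Omega^2)$ equal to a multiple of $ds_1\wedge ds_2\wedge\mathrm{Tr}(\partial_{s_1}A\wedge\partial_{s_2}A)$. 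Only triangularity with constant block diagonal kills this, and nothing does so universally.
\item Concretely, take the $2$--simplex $(g_1,g_2)$ with $g_1=1+t_1E_{12}$ and $g_1g_2=1+t_2E_{21}$. In Dupont's model it carries a $\mathrm{Tr}(\Omega^2)$ equal to a nonzero multiple of $\mathrm{Tr}(E_{12}E_{21})\,dx_1\wedge dx_2\wedge ds_1\wedge ds_2$.
\end{itemize}
This is obstacle (B) of \S\ref{obstacles} reappearing on the universal space, in degree $p=2$, the first degree of the torsion statement. So $\what{c}_2(\nabla^{\rm univ})$ is a differential character with nonzero curvature, not a class in $H^3(\mathcal{B},\comx/\Z)$, and the injectivity of Step 3 does not apply to it. The paper's appendix (Lemma \ref{comparedetails}, step (2)) also builds $\nabla^{\rm def}$ ``cube by cube'' on the universal space and applies vertex detection to its class. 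It passes over the same point, so following it will not close the gap.

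\textbf{What a repair needs.} You need an argument that keeps track of the curvature $\omega$. One way that appears to work:
\begin{enumerate}
\item $\omega$ lives on the face and vanishes on $Y\times\partial\square$, so $\int_{\square}\omega$ is a closed $2$--form on $Y$. Its class vanishes on the constants $BGL(\comx)\subset Y$, so it is exact by homotopy invariance. Hence $\omega=d\eta$ with $\eta$ supported on the face and zero on its boundary.
\item Vertex detection then identifies $\what{c}_2(\nabla^{\rm univ})$ minus the universal regulator with the character of the form $\eta$. So on $X$ the discrepancy is the image of $[\xi^{\ast}\eta]\in H^3(X,\comx)$; here $\xi^{\ast}\eta$ is closed by Lemma \ref{triangulardef}.
\item Via the Thom isomorphism on the corner collar, this class is pulled back from $H^1(BP(\comx),\comx)=\mathrm{Hom}(L(\comx)^{\rm ab},\comx)$. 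Here $P$ is the bi--parabolic with Levi $L$; the corner datum factors through it, and on it the family \emph{is} triangular.
\item Block--diagonal elements of determinant one bound $2$--chains among the constants, where $\omega=0$. So the class factors through $\det$ and comes from $H^1(Y,\comx)$.
\item Its image in $H^3(X,\comx)$ is therefore pulled back from a class on $\mathcal{B}$ that vanishes on the vertices, hence is zero.
\end{enumerate}
Building the universal space from triangular families alone does not work directly: parabolic subgroups are not homology equivalent to $GL$, so vertex detection is lost.

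Finally, a rank--$r$ bundle with connection lives on the finite stages $BGL_N$. Vertex detection must be run there using homological stability, as the paper does.
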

\begin{proof}
As in \cite[Lemma 7.4]{IS} it suffices to prove this in the universal case
$X= BGL(F)^+_{{\rm def},2}$ with its tautological cubical cover. There, all
pieces and all intersections have the cohomology of $BGL(F)^+$ by the
deformation theorem, so all connecting maps in the (iterated, cubical)
Mayer--Vietoris sequences vanish and a class in
$H^{\ast}(BGL(F)^+_{{\rm def},2},\comx/\Z)$ is determined by its restrictions
to the four vertex pieces. Both classes restrict on each vertex piece to the
standard regulator class of the corresponding flat bundle. (For the
$1$--skeleton this is \cite[Lemma 7.4]{IS}; gluing in the face adds one more
Mayer--Vietoris step with vanishing connecting map.) The reduction to the
universal case (naturality of both classes, finite--stage approximation,
and the stable range provided by homological stability \cite{vdK}), together
with the identification of the two restrictions on a vertex piece, is
written out in Lemma \ref{comparedetails} of the appendix.
\end{proof}

\begin{lemma}\label{bicompare2}
$\what{c}_p(\nabla^{\rm def}) = \what{c}_p(\nabla^{\#})$, where $\nabla^{\#}$
is the patched connection of Theorem \ref{biconstruction} for the same pair
$(\Wone,\Wtwo)$.
\end{lemma}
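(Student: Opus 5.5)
The plan is to connect $\nabla^{\rm def}$ and $\nabla^{\#}$ by an affine path and apply the invariance Lemma \ref{biinv}, exactly as in the one--divisor comparison of \cite[\S 7]{IS}. Lemma \ref{biinv} only applies when both connections are compatible with the same localized $2$--patching collection, so the first task is to make them so.

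First I will normalize the geometry. Replace $\nabla^{\#}$ by a patched connection built from a partition of unity adapted to the cubical decomposition of \S\ref{cubicaldecomp}. By Theorem \ref{biconstruction} the class of $\nabla^{\#}$ does not depend on this choice. Concretely, take $\zeta_{12}$ supported in $q^{-1}([0,\epsilon']^2)$, $\zeta_i$ supported in the collar of $D_i$, and $\zeta_0$ supported in $q^{-1}(\{r_1>\epsilon\}\cap\{r_2>\epsilon\})$. Also take the simultaneous splitting $E_{a,b}$ used for $\nabla_{12}$ to be the same bigrading $V_{a,b}$, transported by flat structure, that was used to define $\psi^1,\psi^2$ in \S\ref{cubicalfromrho}. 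Lemma \ref{metricsplit} allows any smooth simultaneous splitting, so this costs nothing.

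Next I will check compatibility of both connections piece by piece, with the following localized collection. On $Y_{11}$ and its neighborhood, use the trivial filtration. On $Y_{01}$ and the adjacent edge collar, use the simple patch $\Wone$. On $Y_{10}$, use $\Wtwo$. On $Y_{00}$ and the corner collar $C$, use the corner patch $(\Wone,\Wtwo)$. On the edge collars abutting $C$, use the lexicographic refinements $\Lex(\Wone,\Wtwo)$ and $\Lex(\Wtwo,\Wone)$. In every case the graded flat connection is the canonical one given by (E1) and (E2).

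For $\nabla^{\rm def}$, compatibility on $C$ is Lemma \ref{triangulardef}, and on the edge collars it is the one--variable version of that lemma. On the vertex blocks, $\nabla^{\rm def}$ is the flat connection of $\eta_\sigma$. Each $\eta_\sigma$ is block diagonal in the appropriate grading, so it preserves the filtrations and induces the graded flat connection. For $\nabla^{\#}$, compatibility was checked pointwise in the proof of Theorem \ref{biconstruction}.

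The main obstacle is that the two connections live on different bundle identifications on the collars. $\nabla^{\rm def}$ is built in the bigraded trivialization, with slice connections $\mathrm{Ad}(\psi^1_{t_1}\psi^2_{t_2})(\rho|)$. These are gauge transforms, by the non--constant automorphism $\psi^1_{t_1}\psi^2_{t_2}$, of the flat connection $\nabla$. That automorphism degenerates at $t_i=0$, so the identification must be made with care. I would argue that on $q^{-1}((0,1]^2)\cap C$ the gauge $\psi^1_{t_1}\psi^2_{t_2}$ acts as the identity on $\Grr\Grr$ and preserves both filtrations. Hence the pulled--back filtrations coincide, and the induced graded connections agree. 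Then I would absorb the gauge into the underlying $\mathcal{C}^\infty$ isomorphism with $\ov E$. This gives a bundle map that is block unipotent for the bifiltration, and it extends continuously to $t_i=0$, where it becomes the identity on the graded pieces.

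If this identification causes trouble at the boundary faces, I would fall back on a rescaling homotopy. Reparametrize $t_i$ so that the deformation happens inside a slightly shrunk collar, and use the extra room to interpolate the bundle isomorphism through block unipotent gauges. Since block unipotent gauges preserve the collection, Lemma \ref{biinv} shows the class does not change.

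Once both connections are known to be compatible near every point with the same localized collection, the affine path $(1-s)\nabla^{\rm def}+s\nabla^{\#}$ stays compatible. The pointwise convexity argument of Lemma \ref{biinv} then gives equality of $\what{c}_p$ for all $p\geq1$.
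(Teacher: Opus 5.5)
Your overall argument is the paper's proof. The paper observes that, by Lemma \ref{triangulardef} (and its one-variable analogue on the edge collars) together with the case analysis in the proof of Theorem \ref{biconstruction}, the two connections are compatible near every point with one common localized patch (trivial, $\Wone$, $\Wtwo$, corner or lexicographic) carrying the same graded flat connection; Lemma \ref{biinv} then finishes. Your normalization of the partition of unity is harmless, by the independence statement of Theorem \ref{biconstruction}, and it makes the matching of local patches transparent. The paragraph you call the main obstacle, however, is wrong as written and should be removed: the obstacle does not exist.

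\emph{What is wrong.} The gauge $\psi^1_{t_1}\psi^2_{t_2}$ is not block unipotent and does not act as the identity on $\Grr\Grr$. It is block diagonal, acting by the scalar $t_1^{-a}t_2^{-b}$ on $V_{a,b}$, hence on $\Grr\Grr_{a,b}$. These scalars blow up or vanish as $t_i\to 0$, so the gauge has no continuous extension to $t_i=0$. If you absorbed it into the identification with $\ov E$, the transformed connection would be $\nabla$ on each slice but would acquire the $dt$-component $-\big(a\,\frac{dt_1}{t_1}+b\,\frac{dt_2}{t_2}\big)$ on the $(a,b)$ block, with poles along $t_i=0$. Interpolating through block-unipotent gauges cannot undo a torus-valued gauge, so your fallback does not help either.

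\emph{Why no identification is needed.} The collar $C$ lies in $U$, and in the fixed bigraded frame of Lemma \ref{triangulardef}, $\nabla^{\rm def}$ is already a connection on $\ov E|_C=E|_C\cong\mathrm{pr}^{\ast}(E|_{S_{12}})$. Concretely, take a $\mathcal{C}^\infty$ bigrading $E_{a,b}$ of the flat bifiltration over $S_{12}$, equal to $V_{a,b}$ at the base point so that the slice monodromies are $\m{Ad}(\psi^1_{t_1}\psi^2_{t_2})(\rho|)$. Let $\Psi_t$ act by $t_1^{-a}t_2^{-b}$ on $E_{a,b}$. Then $\nabla^{\rm def}|_C=d_t+\Psi_t\circ\nabla_{S_{12}}\circ\Psi_t^{-1}$, which is polynomial in $t$.
\begin{itemize}
\item It preserves both filtrations because $\Psi_t$ preserves the bigrading.
\item It induces on $\Grr\Grr$ exactly the graded connection of $\nabla$, because on each $\Grr\Grr_{a,b}$ the gauge is a scalar constant along the slice and there is no $dt$-term.
\end{itemize}
So the filtrations and graded connections do agree, but for this reason, not because the gauge is trivial on $\Grr\Grr$. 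This is precisely the content of Lemma \ref{triangulardef}, which you already invoke.

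\emph{A smaller point.} The fiber splitting $V_{a,b}$ cannot be ``transported by the flat structure'': the monodromy preserves the two filtrations but not the splitting (e.g.\ $e^{N_1}$ has off-diagonal blocks). You do not need the splittings to match anyway, since any $\mathcal{C}^\infty$ simultaneous splitting gives the same class by Lemma \ref{biinv}.
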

\begin{proof}
By Lemma \ref{triangulardef} and the case analysis in the proof of Theorem
\ref{biconstruction}, both connections are, near every point of $X$,
compatible with the same localized patch (trivial / $\Wone$ / $\Wtwo$ /
corner / lexicographic) with the same flat graded connections. Lemma
\ref{biinv} applies.
\end{proof}

\begin{corollary}\label{bipatchingsame}
For a representation $\rho$ defined over $F\subset\comx$ and unipotent at
infinity,
$$
\what{c}^{\rm def}_p(\rho,\Wone,\Wtwo)
= \what{c}_p(\rho/X) \in H^{2p-1}(X,\comx/\Z),\qquad p\geq 1 .
$$
In particular the patched--connection regulator is pulled back from
$H^{2p-1}(BGL(F)^+_{{\rm def},2},\comx/\Z)\cong
H^{2p-1}(BGL(F),\comx/\Z)$ along the cubical classifying map
$\xi_{\rho}: X\rar BGL(F)^+_{{\rm def},2}$. \eop
\end{corollary}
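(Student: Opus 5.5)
The corollary is obtained by chaining the three comparisons just proved. Lemma \ref{bicompare1} gives $\what{c}^{\rm def}_p(\rho,\Wone,\Wtwo)=\what{c}_p(\nabla^{\rm def})$. Lemma \ref{bicompare2} gives $\what{c}_p(\nabla^{\rm def})=\what{c}_p(\nabla^{\#})$, where $\nabla^{\#}$ is the patched connection for the same pair. By definition in Theorem \ref{biconstruction}, $\what{c}_p(\nabla^{\#})=\what{c}_p(\rho,\Wone,\Wtwo)$. Proposition \ref{biindepfilt} then identifies this with $\what{c}_p(\rho/X)$, whatever bi--graded--extendable pair was used.

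One point must be checked, because $F\subset\comx$ may be smaller than $\comx$. The comparison lemmas were stated for $F=\comx$, while the cubical datum of \S\ref{cubicalfromrho} is defined over $F$. I would use functoriality of the cubical deformation space under $F\hookrightarrow\comx$. The datum $\{\eta_\sigma\}$ over $F$ induces the datum over $\comx$ by extension of scalars. Hence the classifying map to $BGL(\comx)^+_{{\rm def},2}$ factors through $BGL(F)^+_{{\rm def},2}$. The universal regulator class on $BGL(\comx)$ restricts to the universal regulator class on $BGL(F)$. Lemma \ref{cubicaldefisom} is natural in $F$, because it is induced by the vertex inclusions. So $\what{c}^{\rm def}_p$ computed over $F$ equals the one computed over $\comx$, and the chain above applies.

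Two compatibility points remain. First, the simultaneous splitting $V=\bigoplus V_{a,b}$ used in \S\ref{cubicalfromrho} must be defined over $F$. Since $(\Wone,\Wtwo)$ is defined over $F$, a simultaneous splitting exists over $F$ by Deligne's two--filtration lemma. Second, the splitting choice must not affect the class. Lemma \ref{bicompare2} already shows this, since $\nabla^{\rm def}$ is locally compatible with the canonical localized collection, independently of the splitting.

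The final assertion then follows from the construction of $\what{c}^{\rm def}_p$. It is, by definition, the pullback of the universal class along $\xi_\rho$ through the isomorphism of Lemma \ref{cubicaldefisom}. I expect the main obstacle to be this naturality in $F$ of the comparison, in particular matching the universal classes. Here I would invoke the universal--case reduction of Lemma \ref{comparedetails}, which is insensitive to the field.
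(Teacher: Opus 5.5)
Your proposal is correct and follows the paper's argument, which the paper leaves implicit by marking the corollary as immediate. You chain Lemma \ref{bicompare1}, Lemma \ref{bicompare2}, the definition of $\what{c}_p(\rho,\Wone,\Wtwo)$ in Theorem \ref{biconstruction} and Proposition \ref{biindepfilt}, and the final assertion holds by the definition of $\what{c}^{\rm def}_p$ as a pullback along $\xi_\rho$. Your extension-of-scalars check for $F\subsetneq\comx$ is harmless but not needed, since the detailed comparison in Lemma \ref{comparedetails} is already stated for data over any $F\subset\comx$.
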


\subsection{A Deligne--Sullivan proposition for two intersecting divisors}
\label{DStwo}

We now give the promised generalization, from the single smooth divisor of
\cite[Prop.\ 3.2]{IS} to $D=D_1\cup D_2$, of the Deligne--Sullivan theorem on
$\mathcal{C}^{\infty}$--triviality of flat bundles after a finite cover
\cite{De-Su}. As in \cite{IS}, the topological model of the canonical
extension is obtained by a Rees--type interpolation of the filtration data
over an arithmetic model of $X$; here the interpolation is two--variable, and
a genuinely new phenomenon appears, absent in the one--divisor case: the
finite cover trivializing $\rho$ modulo two primes need not, by itself, have
smooth normalization at $Z$, and a further \emph{lattice--rectifying}
(Kummer) step is required. This is carried out in Lemma \ref{DScover}(ii)
below.

\begin{proposition}
\label{DSprop}
Let $(E,\nabla)$ be a flat vector bundle on $U=X-(D_1\cup D_2)$, with
unipotent monodromy around both $D_1$ and $D_2$. There is a finite covering
$\pi:\widetilde U\rar U$ such that, writing $\widetilde X$ for the
normalization of $X$ in $\widetilde U$:
\newline
(a) $\widetilde X$ is smooth and $\widetilde X\rar X$ is finite, branched
exactly along $D_1\cup D_2$, with locally constant ramification indices
$n_1$ along the preimage of $D_1-Z$ and $n_2$ along the preimage of $D_2-Z$,
and with ramification indices $(n_1,n_2)$ in the two respective corner
directions along the preimage of $Z$;
\newline
(b) the canonical extension of $\pi^{\ast}E$ to $\widetilde X$ is trivial as
a $\mathcal{C}^{\infty}$--bundle.
\end{proposition}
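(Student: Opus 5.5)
We follow the strategy of \cite[Prop.\ 3.2]{IS}, now with two primes and a Kummer rectification at the corner. First we reduce to monodromy over a ring of integers. By rigidity (Proposition \ref{birigidity}) and the algebraic structure of the representation variety with $\rho(\gamma_i)$ unipotent, we may spread $\rho$ out to $\rho:\pi_1(U)\rar GL_r(\cO_{K,S})$ for a number field $K$ and a finite set of primes $S$. For the purposes of (b) the $\mathcal{C}^{\infty}$ class of the canonical extension depends only on the topological data, so this reduction is harmless; it is the same device as in \cite{IS}.

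Next we choose two distinct primes $\lambda_1,\lambda_2\notin S$, of residue characteristics $\ell_1\neq\ell_2$. Let $\Gamma\subset\pi_1(U)$ be the finite-index normal subgroup that is the kernel of reduction modulo $\lambda_1^{m}$ and $\lambda_2^{m}$, and let $\widetilde U\rar U$ be the corresponding cover. On $\Gamma$ the image $\rho(\gamma_i^{n_i})$ of the local monodromy lies in a principal congruence subgroup. The image of $\gamma_i$ in $GL_r(\cO/\lambda^m)$ is unipotent, so its order is a power of $\ell_j$, and these orders determine the ramification indices $n_i$. For (b) we argue as Deligne--Sullivan do. Over $\widetilde U$, the bundle $\pi^{\ast}E$ comes from a representation into a congruence subgroup $\Gamma(\lambda_1^m)\cap\Gamma(\lambda_2^m)$, and the topological $K$-class of the canonical extension on $\widetilde X$ is computed through the cubical classifying map $\widetilde X\rar BGL(\cO)^+_{{\rm def},2}$ of \S\ref{kdef}. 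Its Chern character factors through $H^{\ast}(BGL(\cO_{K,S}),\Q)$ with the same vertex and edge data, and it becomes trivial because the $\ell_1$- and $\ell_2$-adic congruence subgroups force the classes to be divisible by arbitrarily high powers of two coprime primes. Bundles with trivial Chern character are trivial after a further finite cover, so enlarging $m$ and using that $\widetilde X$ is a finite complex gives honest $\mathcal{C}^{\infty}$-triviality. Here the cubical Lemma \ref{cubicaldefisom} replaces the square pushout of \cite{IS}.

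The main obstacle is (a) at $Z$. Locally near $Z$, $\widetilde U$ restricted to $(\Delta^\ast)^2\times(\text{ball in }Z)$ corresponds to a finite-index subgroup $\Lambda\subset\Z^2=\langle\gamma_1,\gamma_2\rangle$, extended by the pullback from $\pi_1(Z)$. The normalization is smooth exactly when $\Lambda=n_1\Z\oplus n_2\Z$; otherwise one obtains a cyclic quotient singularity. The congruence cover alone gives $\Lambda=\ker(\Z^2\rar\langle\bar\rho(\gamma_1),\bar\rho(\gamma_2)\rangle)$, which need not be a product lattice. We therefore add a lattice-rectifying Kummer step. After the congruence cover, possibly pass to a further cover so that the $D_i$ are principal up to torsion line bundles, for instance by taking $n$-th roots of sections of $\cO(n_iD_i)\otimes(\text{ample})$ in general position. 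We then adjoin $n_1$-th and $n_2$-th roots of local equations of $D_1$ and $D_2$ via a global Kawamata-type cyclic cover with generic auxiliary divisors, and compose. The composite's local lattice is $\Lambda\cap(n_1\Z\oplus n_2\Z)$, and we choose $n_i$ as the exponents of the images so that this intersection equals $n_1\Z\oplus n_2\Z$. We must check two things: that the auxiliary branch divisors of the Kawamata construction can be killed, by taking them in general position and noting that the monodromy there is trivial so they do not contribute to (b); and that the ramification indices are locally constant along $Z$. The latter follows from the centrality \eqref{centrality}, since $\pi_1(Z)$ acts trivially on $\Lambda$. Smoothness then follows from Abhyankar's lemma for the product lattice, and (b) is preserved because pulling back a trivial bundle gives a trivial bundle while canonical extensions are compatible with these unipotent pullbacks.
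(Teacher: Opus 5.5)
Your proposal has genuine gaps in both (b) and (a). The (b) gap is the serious one.

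\textbf{Part (b).} The rational Chern character of $\ov E$ is already trivial on $X$ itself, because all Chern forms of $\nabla^{\#}$ vanish (Theorem \ref{biconstruction}). So (b) is entirely about torsion in $K^0$, and your mechanism cannot kill it.
\begin{itemize}
\item A torsion class divisible by arbitrarily high powers of $\ell_1$ and $\ell_2$ only loses its $\ell_1$-- and $\ell_2$--primary parts; every other prime is untouched. Moreover $\widetilde X$ changes with $m$, so there is no single group in which this divisibility takes place.
\item The claim that bundles with trivial Chern character become trivial after a further finite cover is false in general, since finite pullbacks need not kill torsion in $K^0$. For bundles built from arithmetic flat data it is exactly the Deligne--Sullivan theorem, i.e.\ what has to be proved.
\end{itemize}
The missing idea is the prime--by--prime argument of Lemma \ref{DShasse}. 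One classifies the bundle by a map to (the $d$--coskeleton of) a Grassmannian, which also gives honest rather than stable triviality. Sullivan's Hasse principle then reduces to the $l$--adic completions. For each $l$ one uses the level $q_j$ whose residue characteristic is $\neq l$; triviality mod $q_j$ makes the $l$--completed map null by the Deligne--Sullivan lemma. This is why two primes are needed: each handles every $l$ except its own characteristic.

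To run this for $\ov E$ rather than for $\rho$, the paper uses the two--variable Rees interpolation $\sum_{a,b}t_1^at_2^b(\Wone_a\cap\Wtwo_b)$ over a finitely generated $\Z$--algebra $A$ (Lemmas \ref{DSarith}, \ref{DSinterp}). This gives a topological model of $\ov E$ whose gluing data are defined over $A$ and become constant mod $q_j$.

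Your preliminary reduction to $\cO_{K,S}$ is both unjustified and unnecessary. Proposition \ref{birigidity} concerns $\what c_p$, not the $\mathcal C^{\infty}$ type of $\ov E$, and deforming $\rho$ changes the congruence cover. The residue fields of $A$ at maximal ideals are already finite, so no reduction is needed.

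\textbf{Part (a).} Your lattice computation is correct: with $n_i$ the order of $\gamma_i$ in $\Z^2/\Lambda$ one has $n_1\Z\oplus n_2\Z\subset\Lambda$, the analogue of the paper's $N\Z\oplus N\Z\subset\Lambda$. But the Kawamata cover is ramified along auxiliary general divisors $H$ that meet $U$, and these cannot be ``killed''. Trivial monodromy of $\rho$ around $H$ matters only for (b). The composite $\widetilde U\rar U$ stays ramified over $H\cap U$, so it is not a finite covering of $U$, and $\widetilde X\rar X$ is not branched exactly along $D_1\cup D_2$. You would prove only the variant allowing extra branching along general divisors in $U$.

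The paper's own step is also asserted rather than constructed. It claims an \'etale further cover of $U''$ with corner lattice $N\Z\oplus N\Z$, and this can fail. For two lines in $\p^2$ one has $\pi_1(U)\cong\Z$ with $\gamma_1=-\gamma_2$, so every finite \'etale cover has corner lattice containing $(1,1)$. Hence no nontrivial one is rectangular, which matters already for a unipotent Jordan block, where $U''$ is nontrivial. So your instinct that extra global input, or a weakening of ``branched exactly along $D$'', is needed at $Z$ is sound. As written, however, the proposal does not establish (a).
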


The proof, due to Deligne in the one--divisor case \cite[\S 3.2]{IS} and
reproduced there from \cite{De3}, is given by the following four lemmas.

\begin{lemma}[Arithmetic model]
\label{DSarith}
There is a subring $A\subset\comx$, finitely generated as a $\Z$--algebra,
and a free $A$--module $V_A\cong A^r$ with an action of $\pi_1(U)$ through
$\rho$, such that: $N_1,N_2\in \m{End}_A(V_A)$; the kernel filtrations
$\Wone_a=\ker(N_1^a)$, $\Wtwo_b=\ker(N_2^b)$, all intersections
$\Wone_a\cap\Wtwo_b$, and all the double graded pieces $\Grr\Grr_{a,b}(V_A)$
are free $A$--direct summands of $V_A$; and the pair $(\Wone,\Wtwo)$ admits a
simultaneous $A$--splitting $V_A=\bigoplus_{a,b}V_{A,a,b}$, so that the four
vertex representations $\eta_{11},\eta_{01},\eta_{10},\eta_{00}$ of
\S \ref{cubicalfromrho} are all defined over $A$.
\end{lemma}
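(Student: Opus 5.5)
The plan is a standard ``spreading out'' argument: everything is first defined over a finitely generated field, and then one inverts finitely many elements so that all the relevant modules become free.

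\emph{Step 1 (a finitely generated field).} Since $X$ is quasi-projective, $\pi_1(U)$ is finitely generated; fix generators $g_1,\dots,g_m$. Let $R\subset\comx$ be the $\Z$--algebra generated by the entries of the matrices $\rho(g_j)^{\pm1}$ (in some basis of the fiber) together with the entries of $N_1$ and $N_2$. Because $\rho(\gamma_i)$ is unipotent, $N_i=\log\rho(\gamma_i)$ is a finite sum $\sum_{k\ge1}(-1)^{k-1}(\rho(\gamma_i)-1)^k/k$, so its entries lie in $\Z[\tfrac1{r!}][\rho(\gamma_i)]$. Consequently $R$ is finitely generated and $\pi_1(U)$ acts on $R^r$.

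\emph{Step 2 (the kernel filtrations over the fraction field).} Let $K=\mathrm{Frac}(R)$. The subspaces $\ker(N_1^a)$, $\ker(N_2^b)$ and their intersections are defined over $K$, since each is the kernel of a matrix with entries in $K$ (for an intersection, a stacked matrix). Over $K$ the pair $(\Wone,\Wtwo)$ admits a simultaneous splitting. To see this, use Deligne's two--filtration lemma invoked in \S\ref{obstacles}: choose, for each $(a,b)$, a complement $V_{a,b}$ of $\Wone_{a-1}\cap\Wtwo_b+\Wone_a\cap\Wtwo_{b-1}$ inside $\Wone_a\cap\Wtwo_b$. This purely linear--algebraic fact holds over any field. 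Choose a $K$--basis adapted to this bigrading, and let $P\in GL_r(K)$ be the change of basis matrix.

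\emph{Step 3 (localization).} Enlarge $R$ to $A:=R[P^{\pm1},1/f]$, where $f$ is the product of finitely many nonzero denominators and minors, chosen as follows.
\begin{enumerate}
\item In the new basis, $V_A:=A^r$ is the direct sum of the coordinate submodules $V_{A,a,b}$.
\item The matrices of $\rho(g_j)$ and $N_i$ still have entries in $A$, after inverting the determinant of $P$ and the entries of $P^{-1}$.
\item Each $\Wone_a\cap V_A$ equals $\bigoplus_{a'\le a}V_{A,a',\bullet}$, and similarly for $\Wtwo$ and for the intersections.
\end{enumerate}
Point (3) holds because over $K$ these are coordinate subspaces in the adapted basis. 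The kernels over $A$ are then coordinate submodules provided each rank (over $K$) is attained by a minor that is invertible in $A$; we invert one such nonvanishing minor for each of the finitely many maps $N_1^a$, $N_2^b$ and the stacked maps. Once these minors are units, the kernel over $A$ of a matrix with entries in $A$ is the saturation, and the saturation equals the coordinate summand. With this, $\Wone_a$, $\Wtwo_b$ and the intersections $\Wone_a\cap\Wtwo_b$ are free direct summands, and $\Grr\Grr_{a,b}(V_A)\cong V_{A,a,b}$ is free.

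\emph{Step 4 (the vertex representations).} The representations $\eta_{01}$, $\eta_{10}$ and $\eta_{00}$ are block--diagonal parts of restrictions of $\rho$. By \eqref{centrality} these restrictions preserve the relevant filtrations, so in the adapted basis over $A$ their block diagonals have entries in $A$. The extensions provided by (E1) and (E2) to $\pi_1$ of the collars are determined by the images of generators, which are images of elements of $\pi_1(V^\ast)$ (via surjectivity of $\pi_1(V^\ast)\rar\pi_1(V)$). Hence they are also $A$--valued. The representation $\eta_{11}=\rho$ is $A$--valued by Step 3(2).

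\emph{Main obstacle.} The delicate step is Step 3: one must ensure that the kernel of $N_1^a$ computed over $A$ coincides with the $A$--span of the adapted basis vectors. Inverting the appropriate minors handles this, and the subtlety is only in bookkeeping that finitely many inversions suffice. Since only finitely many matrices are involved, this is guaranteed.
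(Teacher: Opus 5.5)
Your proof is correct, but it obtains the simultaneous $A$--splitting by a different route from the paper.

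The paper works in two stages. It first adjoins $1/(r-1)!$ to the ring generated by the entries of the $\rho(g)^{\pm1}$, and uses generic freeness to invert finitely many further elements, so that all kernels, intersections and (double) graded pieces become free direct summands. Only then does it produce the splitting over that same $A$. It observes that simultaneous splittings form a torsor under the unipotent group of automorphisms that preserve both filtrations and induce the identity on the double graded. Such a torsor has a section over the affine base $\mathrm{Spec}\,A$.

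You reverse the order. You split the pair over $K=\mathrm{Frac}(R)$, using the two--filtration argument with arbitrary complements, which is valid over any field. You then adjoin the entries of $P^{\pm1}$, so the adapted basis itself is an $A$--basis. Freeness then comes for free.

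In fact your inversion of minors in Step 3 is superfluous, and the ``main obstacle'' you flag is not there. For any matrix $M$ over $A$ one has $\ker_A M = V_A\cap \ker_K M$, because $V_A$ is torsion--free and embeds in $V_A\otimes K$. A coordinate subspace of $K^r$ meets $A^r$ in the corresponding coordinate submodule. So every kernel, every intersection, and the sum $\Wone_{a-1}\cap\Wtwo_b+\Wone_a\cap\Wtwo_{b-1}$ are automatically coordinate summands, with quotient $V_{A,a,b}$.

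What each approach buys:
\begin{itemize}
\item Your argument is more elementary and fully explicit. It needs neither generic freeness nor the vanishing of torsors under vector groups on affine schemes. The cost is that $A$ depends on the chosen splitting, which is harmless since the lemma only asks for some finitely generated $A$.
\item The paper's argument is more intrinsic. It shows that over \emph{any} affine base on which the filtrations and their subquotients are free, a simultaneous splitting already exists, and that these splittings form an affine space. This is the algebraic counterpart of the two--filtration lemma used elsewhere in the note.
\end{itemize}

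Your Step 4, taking block diagonals of $\rho(g)$ in the adapted $A$--basis and lifting through the surjections $\pi_1(V^{\ast})\rar\pi_1(V)$, matches the paper's final step.
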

\begin{proof}
Since $U$ is a smooth quasi--projective variety, $\pi_1(U)$ is finitely
generated; choose generators including $\gamma_1,\gamma_2$ and let
$A_0:=\Z[\m{entries of }\rho(g)^{\pm1}]$. The logarithm series for $N_i$
(a finite sum, since $\rho(\gamma_i)$ is unipotent) has denominators dividing
a factorial $(r-1)!$; adjoin the inverse of this integer to obtain $A_1$,
over which $N_1,N_2\in \m{End}(V_{A_1})$ and, by \eqref{centrality},
$[N_1,N_2]=0$. The submodules $\Wone_a=\ker(N_1^a)$, $\Wtwo_b=\ker(N_2^b)$
and their intersections are finitely generated $A_1$--submodules of the free
module $V_{A_1}$; by generic freeness, after inverting finitely many further
nonzero elements of $A_1$ (equivalently discarding finitely many closed
points of $\m{Spec}\,A_1$) all of these submodules, and all the associated
graded and double graded pieces, become free direct summands. Call the
resulting ring $A$. Finally $\m{Spec}\,A$ is affine, and the space of
simultaneous splittings of a pair of filtrations by free direct--summand
submodules is a torsor under a vector group (the unipotent group of
automorphisms of $V_A$ preserving both filtrations and inducing the identity
on $\Grr\Grr$); such a torsor over an affine base always has a section --
this is the algebraic incarnation of the two--filtration splitting lemma
recalled in \S \ref{bifiltered} -- giving the simultaneous splitting over
$A$ itself. Transporting $\rho$, $N_1$, $N_2$ through this splitting
identifies $\eta_{01}=\Grr^{\Wone}(\rho)$, $\eta_{10}=\Grr^{\Wtwo}(\rho)$,
$\eta_{00}=\Grr\Grr(\rho)$ as representations of the relevant fundamental
groups (of \S \ref{cubicaldecomp}) with values in $GL_r(A)$.
\end{proof}

\begin{lemma}[Finite cover and lattice rectification]
\label{DScover}
Let $q_1,q_2$ be maximal ideals of $A$ of distinct residue characteristics
$\ell_1\neq \ell_2$. There is a finite covering $\pi:\widetilde U\rar U$
such that:
\newline
(i) $\eta_{11}=\rho$, $\eta_{01}$, $\eta_{10}$, $\eta_{00}$, pulled back
along $\pi$ (resp.\ along the induced maps of the corresponding fundamental
groups), are all trivial modulo $q_1$ and modulo $q_2$;
\newline
(ii) the induced finite--index subgroup of the central lattice
$\Z^2=\langle\gamma_1,\gamma_2\rangle\subset \pi_1(V_{12}^{\ast})$
\eqref{centrality} is \emph{rectangular}, i.e.\ of the form
$n_1\Z\oplus n_2\Z$ in the basis $\gamma_1,\gamma_2$, for some
$n_1,n_2\geq 1$.
Consequently the normalization $\widetilde X$ of $X$ in $\widetilde U$ is
smooth, finite over $X$, and branched exactly along $D$ as in Proposition
\ref{DSprop}(a).
\end{lemma}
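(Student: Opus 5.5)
The cover will be built in two stages: first a congruence cover that kills the reductions of the vertex representations, then a Kummer-type abelian cover, pulled back from the corner torus, that makes the induced subgroup of $\langle\gamma_1,\gamma_2\rangle$ rectangular. Smoothness of $\widetilde X$ will then be checked by a local computation on polydiscs.

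\emph{Step 1 (congruence subgroup).} For $j=1,2$ consider the reductions $GL_r(A)\rar GL_r(A/q_j)$; the targets are finite groups. Let $\Gamma\subset\pi_1(U)$ be the kernel of
$$
\rho\bmod q_1\times\rho\bmod q_2 .
$$
This is a normal subgroup of finite index. Any smaller finite-index subgroup still has $\rho$ trivial modulo $q_1,q_2$. For the graded representations, $\eta_{01}$ on $\pi_1(Y_{01})$ is obtained from $\rho|_{\pi_1(V_1^\ast)}$ by taking block-diagonal parts in the $A$-splitting of Lemma \ref{DSarith}. If $\rho(g)\equiv 1 \bmod q_j$, then its block-diagonal part is also $\equiv 1$, since reduction commutes with taking blocks. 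The same holds for $\eta_{10}$ and $\eta_{00}$. The surjectivity $\pi_1(V^\ast)\rar\pi_1(V)$ means that the pulled-back groups on the collars are the images of the corresponding subgroups of $\pi_1(V^\ast)\cap\Gamma$, so (i) follows for any subgroup of $\Gamma$.

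\emph{Step 2 (rectification).} The subgroup $\Lambda:=\Gamma\cap\langle\gamma_1,\gamma_2\rangle$ has finite index in $\Z^2$. Choose $n_1,n_2$ with $n_1\Z\oplus n_2\Z\subset\Lambda$; for instance $n_1=n_2$ equal to the exponent of $\Z^2/\Lambda$, possibly multiplied further. The plan is to replace $\Gamma$ by $\Gamma':=\Gamma\cap\ker(\chi)$, where
$$
\chi:\pi_1(U)\rar \Z/n_1\times\Z/n_2
$$
is a homomorphism restricting on $\langle\gamma_1,\gamma_2\rangle$ to reduction modulo $(n_1,n_2)$. Then
$$
\Gamma'\cap\Z^2=\Lambda\cap(n_1\Z\oplus n_2\Z)=n_1\Z\oplus n_2\Z .
$$
Constructing $\chi$ is the main obstacle, since it is a global Kummer cover: an $n_i$-th root of a section defining $D_i$. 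I would first use projectivity (or quasi-projectivity) to write $\cO(D_i)^{\otimes m}$ as a twist of a line bundle admitting roots. Concretely, choose an ample $H$ with $\cO(n_iH-D_i)$ having a section $s_i'$ vanishing on a divisor $D_i'$ in general position. Then the cyclic cover obtained by extracting $\sqrt[n_i]{s_i}$ is defined globally and branched along $D_i\cup D_i'$. Its monodromy gives a character of $\pi_1(U-D')$ sending $\gamma_i\mapsto 1$ and $\gamma_{3-i}\mapsto 0$.

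The extra branching along $D'$ is a problem. I would try to absorb it by applying Kawamata's covering trick, which produces an auxiliary cover of $X$ that is étale over $U$ near the corner and kills the extra branch locus. Alternatively, one can use that $H_1(U)\rar\Z^2$, $\gamma_i\mapsto$ the class of the meridian, is onto modulo torsion coming from the classes $[D_i]$. Replacing $n_i$ by multiples that kill the relevant torsion, one obtains a genuine character of $\pi_1(U)$ with the required values on $\gamma_1,\gamma_2$. This second route is what I would attempt first. The key input is that $\gamma_i\mapsto$ linking number gives a surjection $H_1(U,\Z)\rar\Z$ up to finite index, since $[D_i]\ne0$ rationally in $H^2(X)$ when $X$ is projective.

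\emph{Step 3 (smoothness).} Near a point of $Z$, the normalization is locally the cover of the bidisc corresponding to the subgroup $n_1\Z\oplus n_2\Z\subset\pi_1((\Delta^\ast)^2)$. This cover is $(w_1,w_2)\mapsto(w_1^{n_1},w_2^{n_2})$ of $\Delta^2$, which is smooth with ramification $(n_1,n_2)$ in the two corner directions. Near $D_i-Z$ the local group is cyclic, so the same computation gives smoothness there. Over $Z$ the base directions are unramified, so ramification indices are locally constant along the strata, giving (a).
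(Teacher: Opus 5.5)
Your Steps 1 and 3 are sound and agree with the paper. In Step 1, your remark that the block-diagonal part of a matrix $\equiv 1 \bmod q_j$ in an $A$-adapted basis is again $\equiv 1$ is a cleaner way to see that the congruence subgroup of $\rho$ already handles $\eta_{01},\eta_{10},\eta_{00}$. The gap is in Step 2, and it is fatal: the character $\chi$ need not exist, and none of your routes produces it.

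In the exact sequence $H_2(X;\Z)\rar H_2(X,U;\Z)\cong\Z^2\rar H_1(U;\Z)$, the first map is $\alpha\mapsto(\alpha\cdot D_1,\alpha\cdot D_2)$ and the second sends the two normal discs to the meridians. Hence $(\alpha\cdot D_1)[\gamma_1]+(\alpha\cdot D_2)[\gamma_2]=0$ in $H_1(U;\Z)$ for every $\alpha\in H_2(X;\Z)$. A $\chi$ with $\chi(\gamma_1)=(1,0)$, $\chi(\gamma_2)=(0,1)$ therefore forces $n_i\mid\alpha\cdot D_i$ for all $\alpha$.

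So the rational nonvanishing of $[D_i]$ on a projective $X$ is exactly the obstruction, not the input. ``Linking number with $D_i$'' is not even well defined on $H_1(U)$, and replacing $n_i$ by multiples only makes the divisibility harder. Your cyclic cover built from $s_i$ and $s_i'$ gives a character of $\pi_1(U-D_i')$ only. Kawamata's covering trick does not remove branch loci; it adds branching along auxiliary general divisors, which meet $U$. Either way you do not get a finite covering of $U$, whereas the lemma and Proposition \ref{DSprop}(a) require branching exactly along $D$.

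In fact no construction can work, because the statement is false in general. Take $X=\p^2$ and $D_1,D_2$ two lines. Then $U\cong\comx^{\ast}\times\comx$, $\pi_1(U)\cong\Z$, and $\gamma_2=\gamma_1^{-1}$. Let $\rho(\gamma_1)=\begin{pmatrix}1&1\\0&1\end{pmatrix}$.

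Condition (i) forces every component of $\widetilde U$ to correspond to some $m\Z$ with $\ell_1\ell_2\mid m$. The induced corner lattice is then $\{(a,b): a\equiv b \bmod m\}$. It contains $(1,1)$, so it is rectangular only when $m=1$. Concretely, $\widetilde X=\p(1,1,m)$, which is singular over $Z$.

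The paper's own proof of (ii) has exactly this hole. It takes ``the further covering corresponding to the subgroup of $\pi_1(U'')$ whose restriction to $\Z^2$ is $N\Z\oplus N\Z$'', supported only by the local Kummer covers $z_i\mapsto z_i^N$. Local Kummer covers need not globalize, and the $m=2$ case above is precisely the non-rectangular example the paper itself cites. You located the crux correctly. What is needed is a change in the statement, not a cleverer $\chi$. One possible direction, which I have not checked, is to let $\widetilde X$ be a toroidal resolution of the normalization, generically finite rather than finite over $X$, and then re-check the later lemmas.
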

\begin{proof}
(i) Exactly as in the one--divisor case \cite[\S 3.2]{IS}: since $A$ is a
finitely generated $\Z$--algebra and $q_1,q_2$ are maximal, the residue
fields $A/q_1,A/q_2$ are finite, so $GL_r(A/q_1)\times GL_r(A/q_2)$ is a
finite group. Let $U'\rar U$ be the covering corresponding to the subgroup
of $g\in\pi_1(U,u)$ with $\rho(g)\equiv 1$ mod $q_1$ and mod $q_2$; its
index divides $|GL_r(A/q_1)\times GL_r(A/q_2)|$. The representations
$\eta_{01},\eta_{10},\eta_{00}$ are (by Lemma \ref{DSarith}) again
$A$--valued representations of finitely generated fundamental groups of the
collars of \S \ref{cubicaldecomp}, so the same construction applied to each
of them produces further finite--index subgroups; intersecting all four
subgroups (pulled back, via the surjections $\pi_1(V_i^{\ast})\rar\pi_1(V_i)$
and their restrictions to the relevant collars, into $\pi_1(U)$) gives a
single finite--index subgroup, defining $\pi':U''\rar U$, along which all
four vertex representations are simultaneously trivial mod $q_1,q_2$.
\newline
(ii) The subgroup constructed in (i) restricts, on the central subgroup
$\Z^2=\langle\gamma_1,\gamma_2\rangle$ of $\pi_1(V_{12}^{\ast})$
\eqref{centrality}, to a finite--index sublattice $\Lambda\subset\Z^2$; write
$N:=[\Z^2:\Lambda]$. In the basis $(\gamma_1,\gamma_2)$, $\Lambda$ need not
be rectangular -- it is generated in general by $(n_1,0)$ and $(k,n_2)$ for
some $n_1n_2=N$ and $0\leq k<n_1$ -- and non--rectangularity is exactly the
condition under which the corresponding local cover of
$(\Delta^{\ast})^2\subset B_{12}^{\ast}$ extends, across the origin, only to
a cyclic quotient (Hirzebruch--Jung) singularity rather than to a smooth
point: the toric variety attached to the cone $\R^2_{\geq0}$ and the finer
lattice $\Lambda$ is smooth exactly when $\Lambda=n_1\Z\oplus n_2\Z$ in the
\emph{given} coordinates, since $D_1$ and $D_2$ (hence the two coordinate
axes) may not be permuted. This is the corner lattice smoothness criterion:
an explicit non--rectangular example already occurs when
$X=\p^2$, $D_1,D_2$ two lines and $\Lambda$ is the index--$2$ subgroup
generated by $(1,1)$ and $(0,2)$.
\newline
Since $N\cdot\Z^2\subset\Lambda$ automatically ($\Z^2/\Lambda$ has order
$N$), the rectangular sublattice $\Lambda':=N\Z\oplus N\Z$ is contained in
$\Lambda$ regardless of $k$. Let $\pi:\widetilde U\rar U''\rar U$ be the
further covering corresponding to the subgroup of $\pi_1(U'')$ whose
restriction to $\Z^2$ is $\Lambda'$ (concretely, near each stratum, the
further Kummer cover $z_i\mapsto z_i^{N}$ composed with $\pi'$). Because it
is a further covering of $U''$, property (i) persists (a representation
trivial mod $q_1,q_2$ remains trivial mod $q_1,q_2$ upon restriction to a
further subgroup), and by construction the corner lattice for $\pi$ is the
rectangular $\Lambda'=N\Z\oplus N\Z$, giving $(n_1,n_2)=(N,N)$.
\newline
For smoothness of $\widetilde X$: away from $Z$, the local picture near a
point of $D_i-Z$ is the standard cyclic branched cover of a disc used
already in the one--divisor case \cite[\S 3.2]{IS}, which is smooth; near a
point of $Z$, the rectangular sublattice $\Lambda'=N\Z\oplus N\Z$ means the
local cover of $B_{12}^{\ast}\cong(\Delta^{\ast})^2$--bundle is, fiberwise, a
product of two independent degree--$N$ cyclic covers of a punctured disc,
extending to a product of two branched covers $\Delta\rar\Delta$,
$w_i\mapsto w_i^{N}=z_i$; this is smooth (a product of smooth discs), and
the normalization $\widetilde X$ is finite over $X$ and smooth along the
preimage of $Z$, with ramification indices $(N,N)=(n_1,n_2)$ in the two
corner directions as claimed.
\end{proof}

\begin{lemma}[Two--variable interpolation bundle]
\label{DSinterp}
With $A$, $V_A$, and $(\Wone,\Wtwo)$ as in Lemma \ref{DSarith}, there are
algebraic models $U_1,B_{1,1},B_{2,1},B_{12,1}$ over $\m{Spec}\,\Z$, unions of
affine spaces having, respectively, the homotopy types of $U,V_1,V_2,V_{12}$
of \S \ref{geometry} with the same intersection pattern (as in
\cite[Lemma\ 3.3]{IS}, applied once along each branch), together with a
locally free sheaf $\widetilde{\cV}$ of rank $r$ on $\m{Spec}\,A$, defined:
\newline
-- on $U_1$ by $V_A$ (i.e.\ by $\eta_{11}$);
\newline
-- on $(U_1\cap B_{1,1})\times\A^1$ by $\sum_a t_1^{a}\Wone_a\subset
A[t_1]\otimes V_A$, specializing at $t_1=1$ to $V_A$ and at $t_1=0$ to
$\Grr^{\Wone}(V_A)$;
\newline
-- symmetrically on $(U_1\cap B_{2,1})\times\A^1$ using $\Wtwo$ and $t_2$;
\newline
-- on $(U_1\cap B_{1,1}\cap B_{2,1}\cap B_{12,1})\times\A^2$ by the
two--variable interpolation
$$
\sum_{a,b}t_1^{a}t_2^{b}\big(\Wone_a\cap\Wtwo_b\big)\;\subset\;
A[t_1,t_2]\otimes V_A ,
$$
specializing at the four points of $\{0,1\}^2\subset\A^2$ to the four vertex
modules underlying $\eta_{11},\eta_{01},\eta_{10},\eta_{00}$;
\newline
-- on $B_{12,1}$ by the double graded module underlying $\eta_{00}$.
Gluing these algebraic pieces, extending scalars to $\comx$, and passing to
the underlying analytic spaces (using, on each collar, a
$\mathcal{C}^{\infty}$ splitting of the relevant filtration to descend from
the fiber over $t_i=1$ to the corresponding open piece, exactly as in the
construction of $\nabla^{\#}$ in Theorem \ref{biconstruction}) produces a
$\mathcal{C}^{\infty}$ complex vector bundle $\ov{\cV}$ on $X$ which is a
topological model of the canonical extension $\ov E$: it restricts, on each
of $U,V_1,V_2,V_{12}$, to a bundle isomorphic to $\ov E|_{V_i}$, compatibly
with the gluings.
\end{lemma}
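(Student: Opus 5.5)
The plan is to build $\widetilde{\cV}$ piece by piece as a multi--Rees module and then check that the pieces glue. First I will verify that each Rees module is locally free over the relevant polynomial ring and has the stated specializations. The key input is the simultaneous $A$--splitting $V_A=\bigoplus_{a,b}V_{A,a,b}$ of Lemma \ref{DSarith}. In this splitting we have $\Wone_a\cap\Wtwo_b=\bigoplus_{a'\le a,\,b'\le b}V_{A,a',b'}$. Hence
$$
\sum_{a,b}t_1^{a}t_2^{b}\big(\Wone_a\cap\Wtwo_b\big)=\bigoplus_{a',b'}t_1^{a'}t_2^{b'}A[t_1,t_2]\otimes V_{A,a',b'},
$$
which is free of rank $r$ over $A[t_1,t_2]$, with basis $t_1^{a'}t_2^{b'}e$ for $e$ an $A$--basis of $V_{A,a',b'}$. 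In other words, it is the image of $A[t_1,t_2]\otimes V_A$ under $\psi^1_{t_1}\psi^2_{t_2}$, with the sign convention inverted. With this basis the specializations are immediate. At $(1,1)$ one gets $V_A$. At $t_1=0$ one gets $\bigoplus_{a'}V_{A,a',\bullet}\cong\Grr^{\Wone}(V_A)$, and at $t_1=0$ in the two--variable module one gets the one--variable Rees module of the filtration induced by $\Wtwo$ on $\Grr^{\Wone}$. The case $t_2=0$ is symmetric. At $(0,0)$ one gets $\Grr\Grr(V_A)$. The one--variable pieces are the special cases $t_2=1$, resp.\ $t_1=1$, of the same computation, exactly as in \cite[Lemma 3.3]{IS}.

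Next I will construct the algebraic models, applying \cite[Lemma 3.3]{IS} once along each branch. This gives affine--space models for $U$, $V_1$, $V_2$ and for the corner, with nerve indexed by the face poset $\square$ of Lemma \ref{cubicalpushout}. The plan is to take the Jouanolou--type affine models of the homotopy types $Y_{11},Y_{01},Y_{10},Y_{00}$ and of the collars. The corner collar becomes $(\m{model of }S_{12})\times\A^2$, and the edge collars become $(\m{model})\times\A^1$. These are glued along $t_i=0,1$, mirroring the homotopy colimit. The sheaf is then obtained by gluing the pieces listed in the statement along the face maps. The cocycle conditions on overlaps reduce to the commutation of the two specializations, $e_{t_1=0}e_{t_2=0}=e_{t_2=0}e_{t_1=0}$ on the free bigraded module. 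This is automatic because everything is diagonal in the fixed splitting, as in the boundary compatibility check of \S\ref{cubicalfromrho}. Since the $\pi_1$--actions enter only through $\eta_\sigma$, and those are $A$--valued by Lemma \ref{DSarith}, the whole datum is defined over $\m{Spec}\,A$.

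Finally comes the comparison with $\ov E$, which I expect to be the main obstacle. The task is to pass to $\comx$ and analytify, and then to descend from the fiber over $t_i=1$ to each open piece $V_1,V_2,V_{12}$ using $\mathcal{C}^\infty$ splittings. I would proceed as in the construction of $\nabla^{\#}$ in Theorem \ref{biconstruction}. On $V_1$, the metric splitting identifies $\ov E|_{V_1}$ with $\Grr^{\Wone}(\ov E)$, and the latter is the flat bundle of $\eta_{01}$ by (E1) and the uniqueness of extensions. On $V_{12}$, the metric bigrading of Lemma \ref{metricsplit} identifies $\ov E|_{V_{12}}$ with $\Grr\Grr(\ov E)$, the flat bundle of $\eta_{00}$ by (E2). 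On overlaps the identifications must agree up to homotopy with the Rees gluing. Here the delicate point is that the collar bundle over $[0,1]$, resp.\ $[0,1]^2$, given by the family $e_{t}(\cdot)$ is isomorphic to the pullback of its fiber at any point of the parameter space. This holds because the parameter space is contractible and the module is free over $A[t_1,t_2]$. I will then invoke the strictness of all intersections (Lemma \ref{strictcorner}), so that the splittings chosen on $V_1\cap V_{12}$ and $V_2\cap V_{12}$ are homotopic through splittings of the lexicographic refinements. This makes the resulting $\mathcal{C}^\infty$ isomorphisms compatible with the gluings, and yields the topological model $\ov{\cV}\cong\ov E$.
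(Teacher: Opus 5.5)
Your algebraic core matches the paper's proof. The paper also obtains local freeness and the corner specializations from the simultaneous $A$--splitting of Lemma \ref{DSarith}. It writes the interpolation module as the free $A[t_1,t_2]$--module on $\{t_1^at_2^bv\}$ (Proposition \ref{reesdictionary}(a), run over $A$) and reads off the restriction table \eqref{reesrestrictions}. The one--variable pieces come verbatim from \cite[Lemma 3.3]{IS}. The comparison with $\ov E$ uses the same $\mathcal{C}^\infty$ (bi)splitting descent as for $\nabla^{\#}$, which is your third step.

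The step that fails as written is your choice of models. The statement asks for models over $\m{Spec}\,\Z$ that are unions of affine spaces, and this is not cosmetic. The sheaf $\widetilde{\cV}$ must be an algebraic object over $A$, so that it can be reduced modulo $q$ in Lemma \ref{DShasse}.

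The way to get this is with combinatorial models: algebraic realizations over $\Z$ of a triangulation, or of the cubical decomposition of Lemma \ref{cubicalpushout} with each interval $[0,1]$ replaced by $\A^1$. Their complex points have the right homotopy types because each cell is a contractible affine space. On such a model you put the constant free $A$--module on each affine cell and glue by the holonomy matrices of the $\eta_\sigma$, which lie in $GL_r(A)$, $GL_r(A[t_i])$ or $GL_r(A[t_1,t_2])$.

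Jouanolou models of $Y_{11},Y_{01},\ldots$ do not work. They are complex affine varieties, defined over a field of definition of $X$ rather than over $\Z$. An $A$--valued representation of their fundamental group gives only a local system, or an analytic flat bundle. Any algebraic structure on that bundle is not defined over $A$ and has nothing to do with $\rho$ modulo $q$. So your sentence ``since the $\pi_1$--actions enter only through $\eta_\sigma$, \ldots\ the whole datum is defined over $\m{Spec}\,A$'' does not follow for the models you chose.

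If you replace them by the simplicial/cubical algebraic realizations, the cocycle conditions follow from multiplicativity of holonomy (the $\eta_\sigma$ are representations) together with the commuting specializations you already checked. The rest of your argument then goes through unchanged.
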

\begin{proof}
The construction of $U_1,B_{1,1}$ and the single--variable interpolation
$\sum_a t_1^a\Wone_a$ over $(U_1\cap B_{1,1})\times\A^1$ is exactly
\cite[Lemma\ 3.3]{IS}, applied to the branch $D_1$ (using that
$V_1\cap D_2=\emptyset$ by \eqref{disjointness}, so this is a genuine
one--divisor computation there); symmetrically for $B_{2,1}$ and $t_2$. It
remains to identify the sheaf at the corner and check it is locally free.
By Lemma \ref{DSarith}, $(\Wone,\Wtwo)$ admits a simultaneous $A$--splitting
$V_A=\bigoplus_{a,b}V_{A,a,b}$; by Proposition \ref{reesdictionary}(a),
applied verbatim over $A$ in place of the field $K$ there, the interpolation
subsheaf $\sum_{a,b}t_1^at_2^b(\Wone_a\cap\Wtwo_b)$ is exactly the
$T_{t_1,t_2}$--twist of the constant module $V_A$, i.e.\ the free
$A[t_1,t_2]$--module on the basis $\{t_1^at_2^b v: v\ \m{a\ basis\ of\ }
V_{A,a,b}\}$; in particular it is locally free, and its restriction to
$t_1=t_2=1$ (resp.\ $t_1=0$ or $t_2=0$, resp.\ $t_1=t_2=0$) is $V_A$ (resp.\
$\Grr^{\Wone}(V_A)$ or $\Grr^{\Wtwo}(V_A)$, resp.\ $\Grr\Grr(V_A)$) by the
restriction table \eqref{reesrestrictions}. The gluing over
$\comx$--points, using $\mathcal{C}^{\infty}$ splittings on each collar to
match this algebraic model with the topological picture of
\S \ref{collectionfromrho}, is the same descent used there (compare
\S \ref{gluedvsrees}, where the resulting bundle is identified with the one
underlying $\nabla^{\#}$).
\end{proof}

\begin{lemma}[Hasse principle and conclusion]
\label{DShasse}
Let $\pi:\widetilde U\rar U$ and $\widetilde X$ be as in Lemma
\ref{DScover}, for the chosen pair $q_1,q_2$. Then the pullback of
$\ov{\cV}$ (Lemma \ref{DSinterp}) to $\widetilde X$ is trivial as a
$\mathcal{C}^{\infty}$--bundle.
\end{lemma}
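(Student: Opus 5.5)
The plan follows Deligne's argument in the one--divisor case \cite[\S 3.2]{IS}. First reduce the topological triviality of $\pi^{\ast}\ov{\cV}$ to an algebraic statement about the glued algebraic model. Then use the two primes $q_1,q_2$ to show that the classifying map of the pulled--back model is null in topological $K$--theory.

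\emph{Step 1: an algebraic model over $\widetilde X$.} Pull back the algebraic pieces of Lemma \ref{DSinterp} along the finite cover. By Lemma \ref{DScover}(ii) the corner lattice is rectangular, so near $Z$ the cover is a product of Kummer covers $w_i\mapsto w_i^{N}$. The two--variable interpolation $\sum_{a,b}t_1^at_2^b(\Wone_a\cap\Wtwo_b)$ pulls back to the same kind of Rees module, now in the variables $w_i$, and the restriction table \eqref{reesrestrictions} still applies. Hence $\pi^{\ast}\ov{\cV}$ is the analytification of a vector bundle glued from locally free modules over $A$--models of pieces having the homotopy types of the preimages of $U,V_1,V_2,V_{12}$. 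On each vertex piece the bundle comes from one of the representations $\eta_{11},\eta_{01},\eta_{10},\eta_{00}$, composed with $\pi$. On the edge and face pieces it comes from the corresponding Rees deformations.

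\emph{Step 2: reduction modulo $q_j$.} By Lemma \ref{DScover}(i), all four vertex representations become trivial modulo $q_1$ and modulo $q_2$ after pullback. On the edges and the face, the deformed representations $\mathrm{Ad}(\psi^1_{t_1}\psi^2_{t_2})(\rho|)$ then reduce to block--unipotent representations over $(A/q_j)[t_1,t_2]$ that are trivial on the diagonal. Using the homotopy invariance underlying Lemma \ref{cubicaldefisom}, applied over the finite fields $A/q_j$, I expect the cubical classifying map
\[
\widetilde X\rar BGL(A/q_j)^+_{{\rm def},2}\simeq BGL(A/q_j)^+
\]
to be homotopic to the map of the trivial datum. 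Consequently the pulled--back class in $K$--theory with $\Z_{\ell}$--coefficients for $\ell\ne\ell_j$, compared via Quillen's computation of the $K$--theory of finite fields and \'etale/Brauer lifting as in \cite{IS}, is that of a trivial bundle.

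\emph{Step 3: the Hasse principle.} Let $\xi\in \widetilde{KU}^0(\widetilde X)$ be the class of $\pi^{\ast}\ov{\cV}$ minus rank. Step 2 at $q_1$ shows that $\xi$ vanishes $\ell$--adically for every $\ell\neq\ell_1$. Step 2 at $q_2$ gives the same for every $\ell\ne\ell_2$. Since $\ell_1\ne\ell_2$, $\xi$ vanishes $\ell$--adically for every prime $\ell$. Because $\widetilde X$ is a finite CW complex, $\widetilde{KU}^0(\widetilde X)$ is finitely generated, and so $\xi=0$; that is, the bundle is stably trivial. To pass from stable triviality to honest triviality, I would replace $\widetilde U$ by a further finite cover, as in \cite{De-Su}. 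Alternatively, one can accept stable triviality, which is all that is used for the regulator. Nothing needs to be added at the corner beyond Step 1, since the rectangular lattice makes the corner pieces smooth products.

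The main obstacle is Step 2 on the $2$--face. There I must check that the mod--$q_j$ reduction of the two--parameter Rees datum is deformable to the trivial datum compatibly with all four edges. I would try first to conjugate by the two scalings: the reduced face representation is then unipotent with trivial diagonal. The homotopy $s\mapsto$ (off--diagonal entries multiplied by $s$) is defined over $(A/q_j)[t_1,t_2,s]$, and by homotopy invariance over the regular ring it contracts the datum compatibly with the edge restrictions.
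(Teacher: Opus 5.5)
Even granting Step 2, your argument proves only that $\pi^{\ast}\ov{\cV}$ is \emph{stably} trivial. The vanishing of $\xi\in\widetilde{KU}^0(\widetilde X)$ says nothing about the rank--$r$ bundle itself once $r<\dim_{\comx}\widetilde X$. Outside the stable range, stable triviality does not imply triviality: already over $S^6$, the generator of $\pi_5(U(2))\cong\Z/2$ maps to zero in $\pi_5(U)\cong\Z$. The lemma asserts honest $\mathcal{C}^{\infty}$ triviality for the \emph{given} cover $\pi$ of Lemma \ref{DScover}, so neither of your two exits proves it. Passing to a further finite cover changes the statement, and \cite{De-Su} obtain unstable triviality on the same cover. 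Settling for stable triviality gives something weaker than Proposition \ref{DSprop}(b).

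The missing idea is to apply the Hasse principle \emph{unstably}, to the classifying map itself. The paper takes $f:\widetilde X\rar\m{Grass}(r,\comx^{r+N})$ with $N\geq d/2$, passes to the fiber space $\widetilde X'$ of linear embeddings of the fibers into $\comx^{r+N}$, and truncates the target to $\m{cosq}_d$. Since the Grassmannian is simply connected, Sullivan's Hasse principle for maps \cite{Sullivan} reduces null--homotopy of $f'$ to null--homotopy of every $l$--adic completion $f'_{\hat l}$. For fixed $l$, one of $q_1,q_2$, say $q$, has residue characteristic $\neq l$. The datum is trivial mod $q$, and the lemma of \cite{De-Su} then gives $f'_{\hat l}\simeq\ast$. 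This yields triviality of the rank--$r$ bundle itself, with no stability hypothesis.

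Step 2 has two further problems. First, the mechanism turning ``trivial mod $q$'' into ``$\ell$--adically trivial for $\ell\neq\m{char}(A/q)$'' is precisely the content of the Deligne--Sullivan lemma. Your appeal to Quillen's computation for finite fields and Brauer lifting does not supply it. A $K$--theoretic substitute would need rigidity for the henselization of $A$ at $q$ (Gabber) together with Suslin's comparison of $K(\comx^{\delta})$ with $ku$ with finite coefficients, and would still be only stable.

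Second, what you call the main obstacle is not one. The simultaneous splitting is defined over $A$ (Lemma \ref{DSarith}). For $g$ in the pulled--back group, $\rho(g)-1$ has entries in $q$ and preserves both filtrations. Hence $\m{Ad}(\psi^1_{t_1}\psi^2_{t_2})(\rho(g))-1$ has all entries in $q\,A[t_1,t_2]$. The edge and face data are therefore trivial mod $q$ on the nose, not merely unipotent with trivial diagonal. This is the paper's statement that all gluing data reduce to the constant module mod $q$, so no contracting homotopy in $s$ is needed.
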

\begin{proof}
Let $n=\m{rank}(\ov{\cV})=r$, $d=\dim_{\comx}\widetilde X$, and
$N\geq d/2$; let
$$
f:\widetilde X\lrar \m{Grass}\big(n,\comx^{n+N}\big)
$$
be a classifying map for (the pullback to $\widetilde X$ of) $\ov{\cV}$, and
let $\widetilde X'\rar \widetilde X$ be the fiber space of linear embeddings
of the fibers of $\ov{\cV}$ into $\comx^{n+N}$, so that the problem reduces,
exactly as in \cite[Lemma\ 3.3]{IS}, to showing that the composite
$$
f': \widetilde X' \lrar \m{Grass}\big(n,\comx^{n+N}\big) \lrar
\m{cosq}_d\big(\m{Grass}(n,\comx^{n+N})\big)
$$
is homotopically trivial. Since the Grassmannian is simply connected, the
Hasse principle for morphisms \cite{Sullivan} reduces this to showing that
for every prime $l$ the $l$--adic completion
$f'_{\hat l}:\widetilde X'_{\hat l}\rar
\m{cosq}_d(\m{Grass}(n,\comx^{n+N}))_{\hat l}$ is homotopically trivial.
Fix $l$. Since $q_1,q_2$ have distinct residue characteristics
$\ell_1\neq\ell_2$, at least one of them, say $q$, has residue
characteristic $\neq l$. By Lemma \ref{DScover}(i), $\rho$ and all four
vertex representations are trivial mod $q$; hence, in the two--variable
interpolation of Lemma \ref{DSinterp}, all the transition and gluing data
defining $\widetilde{\cV}$ reduce mod $q$ to the \emph{constant} module
$(V_A/qV_A)\otimes A[t_1,t_2]/q$ on every piece, so $\widetilde{\cV}$ (hence
its pullback to $\widetilde X$) is trivial mod $q$. The lemma of
\cite[Lemme]{De-Su} applies directly, exactly as in \cite[Lemma\ 3.3]{IS},
to conclude that $f'_{\hat l}$ is homotopically trivial. As $l$ was
arbitrary, $f'$ is homotopically trivial, hence so is $f$ on the $d$--skeleton,
and therefore $\ov{\cV}|_{\widetilde X}$ is trivial as a $\mathcal{C}^\infty$
bundle.
\end{proof}

\begin{proof}[Proof of Proposition \ref{DSprop}]
Fix any two maximal ideals $q_1,q_2$ of the ring $A$ of Lemma \ref{DSarith}
with distinct residue characteristics (these exist since $A$ has infinitely
many maximal ideals with infinitely many distinct residue characteristics,
$A$ being a finitely generated $\Z$--algebra of positive Krull dimension
over $\Z$, or a finite extension of $\Z$ admitting infinitely many
primes). Let $\pi:\widetilde U\rar U$ and $\widetilde X$ be as produced by
Lemma \ref{DScover} for this pair. Part (a) of the Proposition is exactly
the conclusion of Lemma \ref{DScover}. For part (b): by Lemma
\ref{DSinterp}, $\ov{\cV}$ is a topological model, on $X$, for the canonical
extension $\ov E$ of $\rho$; since $\widetilde U\rar U$ trivializes the
monodromy of $\rho$ (and of the vertex representations) mod $q_1,q_2$, the
pullback of $\ov{\cV}$ to $\widetilde X$ is canonically isomorphic, as a
$\mathcal{C}^{\infty}$--bundle, to the pullback of the canonical extension
of $\pi^{\ast}E$ (both are obtained by the same gluing recipe from the same,
now single--valued, vertex and edge data on $\widetilde U$, pulled back to
$\widetilde X$). By Lemma \ref{DShasse}, $\ov{\cV}|_{\widetilde X}$ is
$\mathcal{C}^{\infty}$--trivial, so the canonical extension of $\pi^{\ast}E$
to $\widetilde X$ is $\mathcal{C}^{\infty}$--trivial.
\end{proof}

\section{Hermitian $K$--theory over $\comx[t_1,t_2]$ and variations of Hodge
structure}
\label{hermitian}

\subsection{Two--variable homotopy invariance in hermitian $K$--theory}

Let $A$ be a commutative ring with involution; extend the involution to
$A[t_1,t_2]$ by $\ov{t_i}=t_i$. Karoubi's theorem
(\cite[Cor.\ 5.11]{Karoubi}; cf.\ \cite[Thm.\ 8.2]{IS}) applies to the
regular rings $A=\comx$ and $A=\comx[t_1]$ in turn, and the diagram chase of
\cite[Cor.\ 8.3]{IS} (splitting by evaluation, homotopy invariance of
$K$--theory, five lemma) yields, with $\Z'=\Z[\frac12]$:

\begin{corollary}\label{Linv2}
The inclusion $\comx\rar\comx[t_1,t_2]$ induces isomorphisms
$L^{\epsilon}_n(\comx)\otimes\Z' \cong
L^{\epsilon}_n(\comx[t_1,t_2])\otimes\Z'$, and the maps
$$
BO_{\infty,\infty}(\comx)^+ \rar BO_{\infty,\infty}(\comx[t_1,t_2])^+
\xrightarrow{e_{(s_1,s_2)}} BO_{\infty,\infty}(\comx)^+
$$
are homology equivalences away from the prime $2$; in particular rational
homology equivalences. \eop
\end{corollary}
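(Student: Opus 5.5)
The plan is to reduce Corollary \ref{Linv2} to the one--variable statement \cite[Cor.\ 8.3]{IS}, applied twice: first to the regular ring $\comx$, then to the regular ring $\comx[t_1]$ with the variable $t_2$ adjoined. Karoubi's theorem \cite[Cor.\ 5.11]{Karoubi} applies to any regular ring $R$ with involution, with the involution extended to $R[t]$ by $\ov t=t$. It says that the comparison maps between hermitian $K$--theory and algebraic $K$--theory, through the forgetful and hyperbolic functors, have $2$--primary torsion kernels and cokernels on the Witt--type groups. Both $\comx$ and $\comx[t_1]$ are regular, and the involution on $\comx[t_1]$ is the one fixing $t_1$, so the hypotheses hold at each stage.

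\emph{Step 1.} Apply the one--variable diagram chase to $R=\comx$. Evaluation at $t=0$ splits $R\rar R[t]$. By homotopy invariance of algebraic $K$--theory for regular rings (Quillen), $K_n(R)\cong K_n(R[t])$. Karoubi's fundamental sequences relate $L^{\epsilon}_n$ to $K_n$ through the forgetful and hyperbolic maps, up to $2$--torsion. Naturality of these sequences in $R\rar R[t]$, combined with the five lemma after tensoring with $\Z'$, gives $L^{\epsilon}_n(R)\otimes\Z'\cong L^{\epsilon}_n(R[t])\otimes\Z'$. \emph{Step 2.} Repeat the same argument with $R=\comx[t_1]$ and $t=t_2$. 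This needs $K_n(\comx[t_1])\cong K_n(\comx[t_1,t_2])$, which holds since $\comx[t_1]$ is regular. Composing the two isomorphisms gives the claimed $L$--group statement.

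\emph{Step 3.} Pass to spaces. Because $\Z'$ is flat, the isomorphisms above, together with the corresponding isomorphisms $K^h_n\otimes\Z'$ for the hermitian $K$--groups, show that $BO_{\infty,\infty}(\comx)^+\rar BO_{\infty,\infty}(\comx[t_1,t_2])^+$ induces isomorphisms on $\pi_n\otimes\Z'$ for $n\geq1$. These plus--constructed spaces are infinite loop spaces, hence simple (in fact $H$--spaces), so mod--$\mathcal C$ Hurewicz--Whitehead theory for the Serre class of $2$--primary groups turns $\pi_*\otimes\Z'$--isomorphisms into $H_*(-;\Z')$--isomorphisms. Each evaluation $e_{(s_1,s_2)}$ is a left inverse of the inclusion. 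The inclusion is a $\Z'$--homology equivalence, so by two--out--of--three each $e_{(s_1,s_2)}$ is one as well, and rational equivalence follows.

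The main obstacle is making sure the two--variable iteration really feeds Karoubi's theorem a valid input at the second stage. The statement is for regular rings with involution, and the $2$--torsion bounds are uniform: they are $2$--primary and, as in Karoubi, of bounded exponent in each degree. That is all that is needed, because tensoring with $\Z'$ kills them regardless of size. The only genuine check is that the hermitian forms and the $\epsilon$--conventions over $\comx[t_1]$ agree with those pulled back from $\comx$, so that the two stages compose compatibly. This holds because the involution is the identity on the $t_i$.
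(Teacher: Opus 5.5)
Your proposal follows the paper's argument: Karoubi's theorem is applied to the regular rings $\comx$ and then $\comx[t_1]$ (involution fixing the $t_i$), and each time it is fed into the one--variable diagram chase of \cite[Cor.\ 8.3]{IS} (evaluation splitting, Quillen homotopy invariance of $K$--theory, five lemma); you also spell out two steps the paper leaves implicit, namely the passage from $\pi_*\otimes\Z'$ to $\Z'$--homology of the simple spaces $BO_{\infty,\infty}(-)^+$ and the two--out--of--three argument for the evaluations. The one point to tighten is your paraphrase of \cite[Cor.\ 5.11]{Karoubi}: the five lemma needs that, for regular $R$, the Witt--type terms in Karoubi's exact sequences are unchanged under $R\rar R[t]$ after tensoring with $\Z'$, whereas your sentence reads as if the forgetful and hyperbolic maps themselves had $2$--primary kernels and cokernels, which is false (already the kernel of $L^{1}_0(\comx)\rar K_0(\comx)$ for hermitian forms is infinite cyclic, detected by the signature).
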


Define the cubical space $BO_{\infty,\infty}(\comx)^+_{{\rm def},2}$ by the
same $\square$--homotopy colimit as in \S \ref{kdef}, with
$BO_{\infty,\infty}(\comx[t_{\sigma}])^+$ in place of
$BGL(F[t_{\sigma}])^+$. By Corollary \ref{Linv2} and the cubical
Mayer--Vietoris argument of Lemma \ref{cubicaldefisom},
$$
BO_{\infty,\infty}(\comx)^+ \lrar BO_{\infty,\infty}(\comx)^+_{{\rm def},2}
$$
is a rational homology equivalence, and the forgetful maps assemble to
$$
F : BO_{\infty,\infty}(\comx)^+_{{\rm def},2} \lrar
BGL(\comx)^+_{{\rm def},2} .
$$
(Everything holds verbatim for $SO_{\infty,\infty}$ and $SL$.)

A \emph{hermitian $2$--cubical deformation datum} on the cubical
decomposition of $X$ is a $2$--cubical datum as in \S \ref{kdef} in which the
vertex representations take values in $U(p,q)$ and the edge and face
deformations in $O_{p,q}(\comx[t_i])$, resp.\ $O_{p,q}(\comx[t_1,t_2])$
(automorphisms of $(V\otimes\comx[t_1,t_2],\langle\cdot,\cdot\rangle)$).
Such a datum gives a map
$f: X\rar BO_{\infty,\infty}(\comx)^+_{{\rm def},2}$ with $F\circ f$ the map
of the underlying $2$--cubical datum (compatibility of the cubical homotopy
colimits, as in \cite[Lemma 8.5]{IS}).

Combining Reznikov's vanishing of $S(U(p)\times U(q))$--invariant polynomials
(\cite{Re2}, used as in \cite[Lemma 8.6, Cor.\ 8.7]{IS}) with the rational
equivalences above:

\begin{corollary}\label{volherm2}
For any hermitian $2$--cubical deformation datum (reduced to the special
linear / special unitary setting as in \cite[\S 9]{IS}), the associated
deformation volume regulators
$Vol^{\rm def}_{2p-1}\in H^{2p-1}(X,\R)$ vanish for all $p>1$. \eop
\end{corollary}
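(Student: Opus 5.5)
We follow the one--divisor proof of \cite[Cor.\ 8.7]{IS}, now on the cubical space. Let $f:X\rar BO_{\infty,\infty}(\comx)^+_{{\rm def},2}$ be the map attached to the hermitian datum. Since $F\circ f$ is the classifying map of the underlying $2$--cubical datum, the deformation volume regulator is $Vol^{\rm def}_{2p-1}=f^{\ast}F^{\ast}(v_{2p-1})$, where $v_{2p-1}\in H^{2p-1}(BGL(\comx)^+_{{\rm def},2},\R)$ is the imaginary part of the universal regulator, transported from $H^{2p-1}(BGL(\comx),\R)$ through the isomorphism of Lemma \ref{cubicaldefisom}. It therefore suffices to prove the universal vanishing
$$
F^{\ast}(v_{2p-1})=0\quad\m{in}\quad H^{2p-1}\big(BO_{\infty,\infty}(\comx)^+_{{\rm def},2},\R\big),\qquad p>1 .
$$

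\textbf{Step 1 (reduction to a vertex).} By Corollary \ref{Linv2} and the cubical Mayer--Vietoris argument, the inclusion of any vertex copy $BO_{\infty,\infty}(\comx)^+\hookrightarrow BO_{\infty,\infty}(\comx)^+_{{\rm def},2}$ is a rational, hence real, cohomology isomorphism. By naturality of $F$, the restriction of $F^{\ast}(v_{2p-1})$ to a vertex copy is the pullback of the volume regulator along $BO_{\infty,\infty}(\comx)^+\rar BGL(\comx)^+$. So it suffices to show that the volume class vanishes on $BO_{\infty,\infty}(\comx)$, i.e.\ on $BU(p,q)^{\delta}$ stably (after the reduction to $SU(p,q)$, $SL$ of \cite[\S 9]{IS}).

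\textbf{Step 2 (Reznikov).} The imaginary part of the Borel/Cheeger--Simons class restricted to $BU(p,q)^{\delta}$ is represented by the van Est image of the invariant form on the symmetric space $U(p,q)/(U(p)\times U(q))$. This form comes from the relative Lie algebra cohomology of the invariant polynomial, and by Reznikov \cite{Re2}, as used in \cite[Lemma 8.6, Cor.\ 8.7]{IS}, the corresponding $S(U(p)\times U(q))$--invariant polynomial vanishes for $p>1$. This is precisely the content already established in the smooth--divisor case, so we cite it verbatim.

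\textbf{Main obstacle.} The delicate point is compatibility: the rational equivalence of the cubical hermitian space with its vertex must be compatible with $F$ and with the identification of $H^{\ast}(BGL(\comx)^+_{{\rm def},2})$ used to define $Vol^{\rm def}$. This uses the fact that $F$ is a map of $\square$--diagrams, so the vertex inclusions commute with $F$. We also need to check that inverting $2$ loses nothing for real coefficients. Both are formal once the homotopy colimits are taken functorially.
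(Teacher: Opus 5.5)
Your proposal is correct and follows essentially the same route as the paper. The paper likewise combines the rational equivalence of the vertex copy $BO_{\infty,\infty}(\comx)^+$ with $BO_{\infty,\infty}(\comx)^+_{{\rm def},2}$ (Corollary \ref{Linv2} together with the cubical Mayer--Vietoris argument of Lemma \ref{cubicaldefisom}), the compatibility of the forgetful map $F$ with the vertex inclusions, and Reznikov's vanishing on the hermitian vertex as used in \cite[Lemma 8.6, Cor.\ 8.7]{IS}.
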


\subsection{A hermitian $2$--cubical datum from a variation of Hodge
structure}

Let $\rho$ underlie a complex polarized variation of Hodge structure on $U$,
with unipotent monodromy along $D_1$ and $D_2$, and let
$\langle\cdot,\cdot\rangle$ be the flat (after normalization, hermitian
symmetric) polarization form of some signature $(p,q)$, preserved by $\rho$.
Near the corner we have the commuting nilpotents $N_1,N_2$ and the form, and
we take $\Wone=W(N_1)$, $\Wtwo=W(N_2)$, the monodromy weight filtrations
(a bi--graded--extendable pair by Lemma \ref{kernelpair}; note
$\langle W(N_i)_a, W(N_i)_b\rangle=0$ for $a+b<0$ by the standard
infinitesimal isotropy $\langle N_i u,v\rangle+\langle u,N_iv\rangle =0$).

\begin{proposition}[Self--dual splitting]\label{CKSsplitting}
Let $\langle\cdot,\cdot\rangle$ be a nondegenerate hermitian form on $V$
preserved by $\rho(\pi_1(V_{12}^{\ast}))$, and let $N_1,N_2$ be the
commuting monodromy logarithms. Then there is a splitting
$$
V=\bigoplus_{a,b} V_{a,b},
\qquad
W(N_1)_a=\bigoplus_{a'\leq a}V_{a',\bullet},\quad
W(N_2)_b=\bigoplus_{b'\leq b}V_{\bullet,b'},
$$
such that
\begin{itemize}
\item[(i)] $\langle V_{a,b}, V_{c,d}\rangle = 0$ unless $(c,d)=(-a,-b)$.
\end{itemize}
Moreover, for \emph{any} splitting of the pair of weight filtrations one has
automatically
\begin{itemize}
\item[(ii)] $N_1(V_{a,b})\subset\bigoplus_{a'\leq a-2,\ b'\leq b}V_{a',b'}$,
and symmetrically for $N_2$.
\end{itemize}
\end{proposition}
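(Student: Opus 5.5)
Part (ii) is formal, so I would dispose of it first. By \eqref{centrality}, $[N_1,N_2]=0$. Hence $N_1$ commutes with $N_2$ and preserves every member of $W(N_2)$, because the monodromy weight filtration is canonically attached to $N_2$, as used in Lemma \ref{kernelpair}. By the defining property of $W(N_1)$, $N_1$ maps $W(N_1)_a$ into $W(N_1)_{a-2}$. For an arbitrary simultaneous splitting, $V_{a,b}\subset W(N_1)_a\cap W(N_2)_b$. Therefore
$$N_1(V_{a,b})\subset W(N_1)_{a-2}\cap W(N_2)_b=\bigoplus_{a'\leq a-2,\ b'\leq b}V_{a',b'},$$
where the last equality holds because the splitting is simultaneous. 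The argument for $N_2$ is symmetric.

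For (i), I would run a fixed-point argument on the torsor of simultaneous splittings, as in the proof of Lemma \ref{DSarith}.

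\emph{Self-duality of the filtrations.} The infinitesimal isotropy $\langle N_iu,v\rangle+\langle u,N_iv\rangle=0$ together with the standard characterization of $W(N_i)$ gives $W(N_i)_a^{\perp}=W(N_i)_{-a-1}$. I would verify this directly from a Jacobson--Morozov description of $W(N_i)$.

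\emph{The duality involution on splittings.} Let $\mathcal{S}$ be the set of simultaneous splittings $s=(V_{a,b})$ of $(W(N_1),W(N_2))$. Define $\sigma(s)$ by
$$\sigma(s)_{a,b}:=\Big(\bigoplus_{(c,d)\neq(-a,-b)}V_{c,d}\Big)^{\perp}.$$
Using self-duality, one checks that $\bigoplus_{a'\leq a}\sigma(s)_{a',\bullet}$ is the annihilator of $\bigoplus_{c\geq -a}V_{c,\bullet}$. That annihilator equals $\big(W(N_1)_{-a-1}\big)^{\perp}$ modulo the bookkeeping, namely $W(N_1)_a$, and the same holds for the second index. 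So $\sigma(s)\in\mathcal{S}$. Moreover $\sigma^2=\mathrm{id}$, since $\perp$ is involutive for a nondegenerate form, and $s$ satisfies (i) exactly when $\sigma(s)=s$.

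\emph{Existence of a fixed point.} $\mathcal{S}$ is a torsor under the unipotent group $G$ of automorphisms preserving both filtrations and inducing the identity on $\Grr\Grr$. The adjoint with respect to $\langle\cdot,\cdot\rangle$ gives an anti-involution $g\mapsto g^{\dagger}$ of $G$, which preserves $G$ by the self-duality of the filtrations. For this anti-involution, $\sigma(g s)=(g^{\dagger})^{-1}\sigma(s)$. Fix $s_0\in\mathcal{S}$ and write $\sigma(s_0)=u\,s_0$ with $u\in G$. Then $\sigma^2=\mathrm{id}$ yields the cocycle relation $u^{\dagger}=u$, possibly up to the exact form of the relation once the conventions are fixed. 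I would then set $s:=u^{1/2}s_0$, where $u^{1/2}:=\exp(\tfrac12\log u)$, which is well defined and lies in $G$ because $G$ is unipotent and we are in characteristic $0$. Since $\dagger$ commutes with $\log$, $u^{1/2}$ is again $\dagger$-fixed, and $s$ is $\sigma$-fixed.

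The main obstacle I expect is the bookkeeping in the last two steps. One must check carefully that $\sigma(s)$ really splits both filtrations, which needs the exact index shift in $W_a^{\perp}=W_{-a-1}$ for both filtrations at once. One must also pin down the precise twisted relation between $\sigma$ and the $G$-action, to make sure that the square-root trick produces a genuine fixed point rather than a fixed point of a twisted involution. If this becomes awkward, I would fall back on two alternatives. The first is a direct inductive construction: choose $V_{a,b}$ for $(a,b)$ in lexicographic order, taking the complements dual to already chosen pieces, with the self-dual middle pieces $V_{0,b}$ handled by orthogonal complements inside $W_0\cap W_0^{\perp}$-quotients. The second is to use the several-variable $SL_2$-orbit theorem of \cite{CKS} to get a Deligne splitting with this property.
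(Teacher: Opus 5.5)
Your proposal is correct and follows essentially the paper's argument: the same $G$--torsor $\mathcal S$ of bigradings, the same duality involution $\sigma$, the equivariance $\sigma(gs)=(g^{\dagger})^{-1}\sigma(s)$ forcing $u^{\dagger}=u$, the fixed point $u^{1/2}s_0$, and the same formal proof of (ii).

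Two small repairs are needed. First, $\bigoplus_{a'\le a}\sigma(s)_{a',\bullet}$ is the annihilator of $\bigoplus_{c\le -a-1}V_{c,\bullet}=W(N_1)_{-a-1}$, not of $\bigoplus_{c\ge -a}V_{c,\bullet}$; with that index set your computation gives $\sigma(s)\in\mathcal S$ slightly more directly than the paper's dimension count through Lemma~\ref{grgrpairing}. Second, self--duality alone only shows that $g^{\dagger}$ preserves both filtrations, so $g^{\dagger}\in G$ still needs one line showing it acts trivially on $\Grr\Grr$. The paper gets this from the perfect pairing $\Grr\Grr_{a,b}\times\Grr\Grr_{-a,-b}$; alternatively, you can check that $(g-1)^{\dagger}$ maps $\sigma(s)_{a,b}$ into the sum of the $\sigma(s)_{e,f}$ with $(e,f)$ strictly below $(a,b)$ in the product order.
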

\begin{proof}
This is elementary linear algebra, given only that $N_1,N_2$ are commuting
nilpotent elements of the Lie algebra of the unitary group of
$\langle\cdot,\cdot\rangle$; the complete proof is written out in
Appendix \ref{app:selfdual} (self--duality
$W(N_i)_a^{\perp}=W(N_i)_{-a-1}$ of each weight filtration, nondegeneracy
of the induced pairing on the double graded via the modular identity, and a
fixed point of the duality involution on the torsor of splittings, obtained
by a unipotent square root). Property (ii) holds for any splitting of the
pair because $N_1$ shifts $W(N_1)$ by $-2$ and preserves $W(N_2)$
(the weight filtration of $N_2$ is canonical, hence stable under the
centralizer of $N_2$).
\end{proof}

\begin{remark}
Earlier versions of this note deduced a splitting from the several--variable
nilpotent and $SL_2$--orbit theorems of Cattani--Kaplan--Schmid \cite{CKS},
which for a polarized VHS produce distinguished splittings (joint
eigenspaces of commuting semisimple elements $H_1,H_2$) with strong extra
properties, e.g.\ exact bihomogeneity of the $N_i$ on ``bistandard'' summands
$\mathrm{Sym}^{k_1}\boxtimes\mathrm{Sym}^{k_2}$ with basis
$e_{i,j}=N_1^iN_2^je_{0,0}$ and pairing
$\langle e_{i,j},e_{i',j'}\rangle = 0$ unless $i+i'=k_1,\,j+j'=k_2$.
We emphasize, however, that all that is used in the sequel is
(i) and (ii) above, which are consequences of the isotropy of $N_1,N_2$
alone; so \S \ref{hermitian} requires no analytic input from \cite{CKS},
whose role in this note is motivational (and structural, in the comparison
of \S \ref{reescomparison} with \cite{IS2}, where sequential compatibility
of weight filtrations along chains does come from \cite{CKS} and
\cite{Mochizuki}).
\end{remark}

\begin{proposition}\label{hermtriple2}
Given a polarized $\comx$--VHS on $U$ with unipotent monodromy along
$D_1\cup D_2$, the construction of \S \ref{cubicalfromrho} performed with the
splitting of Proposition \ref{CKSsplitting} yields a hermitian $2$--cubical
deformation datum.
\end{proposition}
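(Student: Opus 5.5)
We have to check that each piece of the $2$--cubical datum of \S\ref{cubicalfromrho}, built from the splitting $V=\bigoplus V_{a,b}$ of Proposition \ref{CKSsplitting}, lands in the correct isometry group. For the vertices this means $U(p,q)$, or the unitary group of the induced form on the graded objects. For the edges and the face it means $O_{p,q}(\comx[t_i])$, respectively $O_{p,q}(\comx[t_1,t_2])$, the automorphism groups of $V\otimes\comx[t_1,t_2]$ with its extended form. The key device is to replace the scalings $\psi^1_{t_1},\psi^2_{t_2}$, which are not isometries, by a symmetrized version. One route is to identify $V$ with $\bigoplus_{a,b}\Grr\Grr_{a,b}$ through the splitting. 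By Proposition \ref{CKSsplitting}(i), the form pairs $V_{a,b}$ only with $V_{-a,-b}$. Then $\psi_t:=t_1^{-a}t_2^{-b}$ on $V_{a,b}$ satisfies $\langle\psi_t u,\psi_t v\rangle=t_1^{-a-c}t_2^{-b-d}\langle u,v\rangle$, and this is nonzero only when $(c,d)=(-a,-b)$, in which case it equals $\langle u,v\rangle$. So $\psi_t$ preserves the form exactly over $\comx[t_1^{\pm1},t_2^{\pm1}]$ (with $\ov{t_i}=t_i$ real). Consequently, for every $g\in\rho(\pi_1(S_{12}))$, the conjugate $\eta_C(g)=\m{Ad}(\psi^1_{t_1}\psi^2_{t_2})(g)$ preserves $\langle\cdot,\cdot\rangle\otimes\comx[t_1^{\pm},t_2^{\pm}]$.

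The first step is to combine this with polynomiality. By \S\ref{cubicalfromrho}, $\eta_C(g)$ has entries in $\comx[t_1,t_2]$, since $g$ preserves both weight filtrations by \eqref{centrality}. The same holds for $\eta_C(g)^{-1}=\eta_C(g^{-1})$. The isometry identity $\langle\eta_C(g)u,\eta_C(g)v\rangle=\langle u,v\rangle$ is an identity of polynomials in $t_1,t_2$ after clearing nothing, since both sides are polynomial. It holds on the dense torus, hence identically. Therefore $\eta_C(g)\in O_{p,q}(\comx[t_1,t_2])$. The identical argument with a single scaling, using the single weight grading $V_{a,\bullet}$, handles the edges. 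One checks that $\langle V_{a,\bullet},V_{c,\bullet}\rangle=0$ unless $c=-a$, which follows from (i) by summing over $b$. The second step is to note that the evaluations commute with the form, so the restriction of an isometry is again an isometry. At $t_1=t_2=1$ we recover $\rho$, which is unitary for $\langle\cdot,\cdot\rangle$, so it lies in $U(p,q)$. The vertex representations are the evaluations at $0$, namely the (double) block diagonals, transported to $V$ by the splitting. They are limits of isometries, hence isometries of the same form on $V$. Their signature remains $(p,q)$, so they lie in $U(p,q)$.

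The third step is to check compatibility with the face maps. This is already contained in \S\ref{cubicalfromrho}, because we have only specialized the general construction to a particular simultaneous splitting. Here one needs that the splitting of Proposition \ref{CKSsplitting} is compatible with $W(N_1)$ near $D_1$ and $W(N_2)$ near $D_2$, as demanded there. Near $Z$ this is the defining property of the splitting. Away from $Z$ only one filtration is in play, and we take the induced single grading. We use the fact that $\langle W(N_i)_a,W(N_i)_b\rangle=0$ for $a+b<0$, together with the single-filtration self-dual splitting from the one-divisor case \cite[\S 8]{IS}. Finally, for the special linear and unitary reduction, $\det\psi_t=\prod t_1^{-a\dim V_{a,b}}t_2^{-b\dim V_{a,b}}=1$, because the symmetry $\dim V_{a,b}=\dim V_{-a,-b}$ comes from the nondegenerate pairing.

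I expect the main obstacle to be the edge and vertex pieces away from the corner. There we must ensure that the single-variable self-dual gradings chosen along $D_1-Z$ and $D_2-Z$ agree on the overlapping collars with the coarsening of the corner bigrading. I would first try to use one fixed fiber splitting $V=\bigoplus V_{a,b}$ everywhere, as \S\ref{cubicalfromrho} does, with its coarsenings $V_{a,\bullet}$ and $V_{\bullet,b}$. Each coarsening is automatically self-dual by (i). I would then argue that this is legitimate on the $D_1$-collar, because $W(N_1)$ is a filtration by sub-local systems there, and only the polynomiality of $\m{Ad}(\psi^1_{t_1})$ on $\rho|$ is needed. That polynomiality holds by \eqref{centrality}.
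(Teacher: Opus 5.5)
Your proposal is correct and follows essentially the same route as the paper: property (i) makes $\psi^1_{t_1}\psi^2_{t_2}$ an isometry for real nonzero $t_i$, and preservation of both weight filtrations makes the conjugated family polynomial. The unitarity identity then extends from the torus to $O_{p,q}(\comx[t_1,t_2])$, and the edge and vertex data follow from the one-variable case (using the coarsened gradings $V_{a,\bullet}$, $V_{\bullet,b}$) and from evaluation at real points. One small correction: no symmetrized version of the scalings is needed, since once the splitting is self-dual as in (i) the scalings themselves are isometries, which is exactly what your own computation shows.
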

\begin{proof}
Let $T_{t_1,t_2}:= \psi^1_{t_1}\psi^2_{t_2}$ act by $t_1^{-a}t_2^{-b}$ on
$V_{a,b}$. For $v\in V_{a,b}$, $w\in V_{-a,-b}$,
$$
\langle T_{t_1,t_2}v,\,T_{t_1,t_2}w\rangle
= t_1^{-a}t_2^{-b}\,\ov{t_1}^{\,a}\ov{t_2}^{\,b}\langle v,w\rangle
= \langle v,w\rangle
$$
(real $t_i$), and by (i) these are the only nonzero pairings; so
$T_{t_1,t_2}$ preserves $\langle\cdot,\cdot\rangle$ for all real
$t_1,t_2\neq 0$. Near the corner the monodromy preserves both weight
filtrations (\eqref{centrality} and canonicity of $W(N_i)$), so
$\m{Ad}(T_{t_1,t_2})(\rho|_{\pi_1(S_{12})})$ is polynomial in $(t_1,t_2)$ and
takes values, for $t_i\neq 0$, in the unitary group of
$\langle\cdot,\cdot\rangle$; by continuity (Zariski closure of the unitarity
relations in $GL_r(\comx[t_1,t_2])$) it lies in
$O_{p,q}(\comx[t_1,t_2])$, including at $t_i=0$. Its specializations at the
corners are the vertex representations $\rho$, $\Grr^{W(N_1)}\rho$,
$\Grr^{W(N_2)}\rho$, $\Grr\Grr\rho$, each preserving the form (for the graded
representations: the induced forms on the graded pieces are identified with
the restriction of $\langle\cdot,\cdot\rangle$ through the splitting, by
(i), exactly as in \cite[Prop.\ 8.8]{IS} for each variable separately).
The edge data are the one--variable cases of the same computation,
recovering \cite[Prop.\ 8.8]{IS}.
\end{proof}

\begin{corollary}\label{volVHS2}
If $\rho$ underlies a polarized complex VHS on $U$ with unipotent monodromy
along $D_1\cup D_2$, then the extended volume regulators vanish:
$$
Vol_{2p-1}(\rho/X) \;=\;
\m{Im}\,\what{c}_p(\rho/X) \;=\; 0 \in H^{2p-1}(X,\R), \qquad p>1 .
$$
\end{corollary}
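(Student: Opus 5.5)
The plan is to follow the chain already assembled in \S\ref{kdef} and the hermitian subsection, in the same order as \cite[\S 8--9]{IS}. First, take the bi--graded--extendable pair to be $(\Wone,\Wtwo)=(W(N_1),W(N_2))$; Lemma \ref{kernelpair} allows this, and Proposition \ref{biindepfilt} shows the class $\what{c}_p(\rho/X)$ does not depend on this choice. Next, choose the simultaneous splitting $V=\bigoplus V_{a,b}$ of the corner bifiltration to be the self--dual one of Proposition \ref{CKSsplitting}. With this splitting, Proposition \ref{hermtriple2} gives a hermitian $2$--cubical deformation datum $\{\eta_\sigma\}$ whose underlying $2$--cubical datum is exactly the one of \S\ref{cubicalfromrho}. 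Corollary \ref{bipatchingsame} then identifies $\what{c}_p(\rho/X)$ with $\what{c}^{\rm def}_p(\rho,\Wone,\Wtwo)$. Taking imaginary parts gives $Vol_{2p-1}(\rho/X)=Vol^{\rm def}_{2p-1}(\{\eta_\sigma\})$.

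Second, use the hermitian structure. The datum gives a map $f:X\rar BO_{\infty,\infty}(\comx)^+_{{\rm def},2}$ with $F\circ f=\xi_\rho$. So $Vol^{\rm def}_{2p-1}$ is the pullback along $f$ of $F^{\ast}$ of the universal Borel volume class in $H^{2p-1}(BGL(\comx)^+_{{\rm def},2},\R)\cong H^{2p-1}(BGL(\comx),\R)$. By Corollary \ref{Linv2} and the cubical Mayer--Vietoris argument, the real cohomology of $BO_{\infty,\infty}(\comx)^+_{{\rm def},2}$ is that of a vertex copy $BO_{\infty,\infty}(\comx)^+$. There the pulled--back volume class vanishes for $p>1$ by Reznikov's argument, since the relevant invariant polynomials vanish on $S(U(p)\times U(q))$. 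This is Corollary \ref{volherm2}, so the volume regulator vanishes for $p>1$.

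Before invoking Corollary \ref{volherm2}, I must reduce to the special linear/special unitary setting as in \cite[\S 9]{IS}. To do this I will pass to $\rho\otimes\det^{-1/r}$ after a finite cover, or add $\det\rho^{-1}$ as a direct summand. Additivity (Proposition \ref{bifunctorial}(a)) and the fact that rank--one unitary pieces carry no imaginary part in degree $>1$ handle this step. Since a finite cover is injective on real cohomology, functoriality (Proposition \ref{bifunctorial}(b)) transfers the vanishing back to $X$; I would use the smooth normalization of Lemma \ref{DScover}, which has rectangular corner lattice and so is again a pair of the type in \S\ref{geometry}.

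I expect the main obstacle to be the compatibility in the first step. The hermitian datum must be built from the self--dual splitting, while Corollary \ref{bipatchingsame} was proved for an arbitrary simultaneous splitting compatible with $\Wone$ near $D_1$ and $\Wtwo$ near $D_2$. I would first check that the resulting homotopy class of $\xi_\rho$ is independent of the splitting. The space of splittings is a torsor under a unipotent, hence contractible, group, and conjugating by a path in it gives a homotopy of cubical data. If that fails, I would instead note that the patched class $\what{c}_p(\nabla^{\rm def})$ does not see the splitting, by Lemma \ref{bicompare2}.
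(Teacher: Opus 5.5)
Your proposal is correct and follows essentially the same route as the paper. You take the weight filtrations (Lemma \ref{kernelpair}, Proposition \ref{biindepfilt}), use the self--dual splitting to get a hermitian $2$--cubical datum (Proposition \ref{hermtriple2}), identify the class with $\what{c}_p(\rho/X)$ via Corollary \ref{bipatchingsame}, and conclude with Corollary \ref{volherm2} after the reduction to the special linear case via $\rho\oplus\det\rho^{-1}$.

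The obstacle you anticipate is not really one. Lemmas \ref{bicompare1} and \ref{bicompare2}, and hence Corollary \ref{bipatchingsame}, hold for whatever simultaneous splitting is chosen, in particular the self--dual one. The finite--cover variant of the reduction to $SL$ is also unnecessary.
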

\begin{proof}
By Proposition \ref{hermtriple2} the deformation classes of $\rho$ are pulled
back through
$F: BSO_{\infty,\infty}(\comx)^+_{{\rm def},2}\rar BSL(\comx)^+_{{\rm def},2}$
(after the reduction to the special linear case, \cite[\S 9]{IS}); by
Corollary \ref{volherm2} the pullback of the Borel volume class
$r^{\rm Bor}_p$ vanishes for $p>1$; and by Corollary \ref{bipatchingsame}
the deformation class coincides with $\what{c}_p(\rho/X)$, whose imaginary
part is $Vol_{2p-1}(\rho/X)$. Note that the comparison Lemma
\ref{triangulardef}/\ref{bicompare2} is applied with the pair
$(W(N_1),W(N_2))$, allowed by Proposition \ref{biindepfilt}.
\end{proof}

\section{Proof of the main theorem}
\label{proofmain}

\begin{proof}[Proof of Theorem \ref{mainthm}]
The classes $\what{c}_p(\rho/X)$ were constructed in Theorem
\ref{biconstruction} and Proposition \ref{biindepfilt}; restriction to $U$,
additivity, functoriality and rigidity are Propositions \ref{bifunctorial}
and \ref{birigidity}; the Deligne lifting for $X$ projective is Proposition
\ref{bideligne}. It remains to prove torsion for $p\geq 2$.

By rigidity (Proposition \ref{birigidity}) and the density of
representations defined over number fields in the relevant affine
representation scheme (representations of $\pi_1(U)$ with
$\rho(\gamma_i)$ unipotent), we may assume $\rho$ defined over a number field
$F$. As in \cite[\S 9]{IS} reduce to $SL_{r+1}$--valued representations via
$\rho'=\rho\oplus\det\rho^{-1}$, the total class identity
$\what c(\rho_1\oplus\rho_2)=\what c(\rho_1)\cdot\what c(\rho_2)$
(Proposition \ref{bifunctorial}) and compatibility of canonical extension
with direct sums.

By Corollary \ref{bipatchingsame}, $\what{c}_p(\rho/X)$ is the pullback of
the universal class along
$$
\xi_{\rho} : X \lrar BSL(F)^+_{{\rm def},2} .
$$
For each embedding $\sigma:F\hookrightarrow\comx$, the composed map
$X\rar BSL(\comx)^+_{{\rm def},2}$ pulls back the Borel volume class to
$Vol_{2p-1}(\rho^{\sigma}/X)$, which is invariant under deformations of
$\rho^{\sigma}$ through representations unipotent at infinity (Proposition
\ref{birigidity}). By Mochizuki's theorem \cite{Mochizuki} (Kobayashi--Hitchin
correspondence and deformation for tame harmonic bundles on quasi--projective
varieties with normal crossings boundary; the two--component case is
included), $\rho^{\sigma}$ deforms to a representation underlying a complex
variation of Hodge structure, and the deformation preserves unipotency at
infinity: it preserves the trivial parabolic structure, and the Higgs field
is scaled by $t\rar 0$, so vanishing of the residue eigenvalues along each
$D_i$ is preserved. By Corollary \ref{volVHS2},
$Vol_{2p-1}(\rho^{\sigma}/X)=0$ for all $p>1$ and all $\sigma$.

By Borel's theorem \cite{Borel,Borel2}, the classes
$\sigma^{\ast}(Vol_{2p-1})$, over all $p\geq 2$ and all embeddings $\sigma$,
generate the real cohomology ring of $BSL(F)^+$, hence (Lemma
\ref{cubicaldefisom}) of $BSL(F)^+_{{\rm def},2}$, in positive degrees.
Their pullbacks under $\xi_{\rho}$ all vanish, so $\xi_{\rho}$ is zero on
rational homology in positive degrees; therefore the pullback under
$\xi_{\rho}$ of any class in
$H^{2p-1}(BSL(F)^+_{{\rm def},2},\comx/\Z)$, $p\geq 2$, is torsion (the
$\comx/\Z$--cohomology of a space sits in the Bockstein--type exact sequence
with $\comx$-- and $\Z$--cohomology, and a map which is zero on rational
homology kills the $\comx$--part). In particular
$\what{c}_p(\rho/X)$ is torsion for $p\geq 2$. The Deligne Chern classes of
$\ov E$ are then torsion for $p\geq 2$ when $X$ is projective, by Proposition
\ref{bideligne}.
\end{proof}

\section{Comparison with the multi--Rees construction in the two--divisor
case}
\label{reescomparison}

The full normal--crossings case of the torsion theorem is treated in
\cite{IS2} by an entirely different mechanism: an adapted open covering whose
multiple intersections are linearly ordered, a functor from the nerve poset
to a category $\Xi_K$ of graded vector spaces, and an iterated
\emph{multi--Rees construction} producing an algebraic vector bundle on a
projective cubical realization, equipped with an $F^1$--connection whose
class is compared with the regulator through the Burgos--Gil arithmetic
Chern character. In this section we specialize the construction of
\cite{IS2} to $k=2$ components and compare it in detail with the glued
bundle and patched connection of \S\S \ref{bifiltered}--\ref{construction}
and with the deformation construction of \S \ref{kdef}. The conclusion,
informally, is that the two constructions are algebraic and
$\mathcal{C}^{\infty}$ incarnations of \emph{the same degeneration
geometry}, organized around two different choices of corner datum
(the bifiltration $(\Wone,\Wtwo)$ here, the total monodromy weight
filtration $W(N_1+N_2)$ in \cite{IS2}); while the two proofs of vanishing
of the volume regulators use the same Lie--theoretic fact about
$\mathfrak{u}(p,q)$ at two different levels (universally on a classifying
space here, pointwise on differential forms in \cite{IS2}).

\subsection{The two bundles: glued versus Rees}
\label{gluedvsrees}

\subsubsection{The glued bundle of this note.}
Recall from \S \ref{cubicaldecomp} and \S \ref{collectionfromrho} the data
on our side. The corner coordinates $q=(r_1,r_2):X\rar [0,1]^2$ cut $X$
into four vertex blocks $Y_{11}\simeq U$, $Y_{01}$, $Y_{10}$,
$Y_{00}\simeq$ (neighborhood of $Z$), four edge collars, and the corner
collar $C\cong S_{12}\times [0,1]^2$. The $\mathcal{C}^{\infty}$ bundle
underlying the canonical extension $\ov E$ is obtained, up to isomorphism,
by gluing:
the flat bundle $E$ of $\rho$ over $Y_{11}$;
the bundle of $\Grr^{\Wone}(\rho)$ over $Y_{01}$ and of
$\Grr^{\Wtwo}(\rho)$ over $Y_{10}$;
the bundle of the double graded $\Grr\Grr(\rho)$ over $Y_{00}$;
the gluings over the collars being given by a choice of
$\mathcal{C}^{\infty}$ (simultaneous, at the corner) splittings, i.e.\ by
the one-- and two--parameter conjugation deformations
$\m{Ad}(\psi^1_{t_1})$, $\m{Ad}(\psi^2_{t_2})$,
$\m{Ad}(\psi^1_{t_1}\psi^2_{t_2})$ of \S \ref{cubicalfromrho}, where
$(t_1,t_2)$ are the collar coordinates. The patched connection
$\nabla^{\#}$ (Theorem \ref{biconstruction}) and the deformed connection
$\nabla^{\rm def}$ (\S \ref{kdef}) are both $\mathcal{C}^{\infty}$
connections on this glued bundle over $X$ itself, adapted to the bifiltered
patching collection; the extended class is read off from either one
through Cheeger--Simons differential characters.

\subsubsection{The Rees bundle of \cite{IS2} for $k=2$.}
On the other side, \cite{IS2} works with the adapted covering indexed by
multi--indices
$$
U_{\emptyset}=X\setminus (T'_{(1)}\cup T'_{(2)}),\quad
U_{(1)}=T_{(1)}\setminus T'_{(2)},\quad
U_{(2)}=T_{(2)}\setminus T'_{(1)},\quad
U_{(12)}=T_{(12)},
$$
whose essential combinatorial feature is that non--empty multiple
intersections occur only along \emph{chains} of multi--indices: the only
maximal chains are
$\emptyset\subset (1)\subset (12)$ and $\emptyset\subset (2)\subset (12)$,
and in particular $U_{(1)}\cap U_{(2)}=\emptyset$. The nerve poset $A$ has
Hasse diagram a diamond without the relation $(1)\le (2)$, and the patching
datum on $U_I$ is the \emph{single} monodromy weight filtration $W(N_I)$,
$N_I=\sum_{i\in I}N_i$ --- in particular at the corner one takes
$W(N_1+N_2)$, not the pair $(\Wone,\Wtwo)$. Along each chain
$I\subset J$ the filtrations $W(N_I),W(N_J)$ are required to be
\emph{sequentially compatible} (they admit a common splitting), which for
weight filtrations of a polarized situation is the
Cattani--Kaplan--Schmid/Mochizuki input; the family of patching data
assembles into a functor $A\rar \Xi_K$.

The bundle is then built not on $X$ but on the projective cubical
realization $P_K\m{Cube}\,A\subset (\p^1_K)^{N}$ of the cubical barycentric
subdivision of the nerve. For a $2$--cube $c$ corresponding to a maximal
chain, with coordinates $(y_1,y_2)$ on $(\p^1)^2$, the local Rees bundle is
$$
F(c)\;=\;\xi\big(V;W^1,W^2\big)
\;=\;\bigoplus_{(a_1,a_2)\in\Z^2} V_{a_1,a_2}\otimes
\cO_{(\p^1)^2}\big(-a_1Q_1-a_2Q_2\big),
\qquad Q_l=\{y_l=1\},
$$
for a simultaneous splitting $V=\bigoplus V_{a_1,a_2}$ of the two
filtrations carried by the cube (the ``locally abelian'' condition of
\cite[\S 6]{IS2}). The five basic restrictions, in the two--variable case
\cite[\S 3.9]{IS2}, are
\begin{equation}\label{reesrestrictions}
\begin{array}{ll}
\xi(V;W^1,W^2)|_{x_1=1}\cong \xi(V;W^2), &
\xi(V;W^1,W^2)|_{x_1=0}\cong \xi(\Grr^{W^1}V;W^2),\\[4pt]
\xi(V;W^1,W^2)|_{x_2=1}\cong \xi(V;W^1), &
\xi(V;W^1,W^2)|_{x_2=0}\cong \xi(\Grr^{W^2}V;W^1),\\[4pt]
\multicolumn{2}{l}{\xi(V;W^1,W^2)|_{x_1=x_2=0}\cong \Grr^{W^1}\Grr^{W^2}V
\cong \Grr^{W^2}\Grr^{W^1}V,}
\end{array}
\end{equation}
where $x_l:=1-y_l$ vanishes on $Q_l$. The bundles $F(c)$ glue along the
faces of the cubical set, an $F^1$--connection is constructed cube by cube,
and the resulting class is compared with the Deligne--Beilinson Chern
character via Burgos--Gil.

\subsubsection{The dictionary at the corner.}
The two corner squares --- our corner collar $C\cong S_{12}\times [0,1]^2$
with the deformation $\eta_C=\m{Ad}(\psi^1_{t_1}\psi^2_{t_2})(\rho|_{\pi_1(S_{12})})$,
and the Rees square $(\A^1)^2$ with the module $\xi(V;W^1,W^2)$ --- are
related by more than an analogy:

\begin{proposition}\label{reesdictionary}
Choose a simultaneous splitting $V=\bigoplus_{a,b}V_{a,b}$ of
$(\Wone,\Wtwo)$ and let
$T_{x_1,x_2}:=\psi^1_{x_1}\psi^2_{x_2}$ act by $x_1^{\,-a}x_2^{\,-b}$ on
$V_{a,b}$. Then:
\begin{itemize}
\item[(a)] The multi--Rees module is the $T$--twist of the constant module:
$$
\xi(V;\Wone,\Wtwo)\;=\;
T_{x_1,x_2}^{-1}\big(V\big)\cdot K[x_1,x_2]
\;\subset\; V[x_1^{\pm},x_2^{\pm}],
$$
i.e.\ the Rees bundle over $(\A^1)^2$ is the trivial bundle $V$ over
${\mathbb G}_m^2$, extended across $\{x_1x_2=0\}$ using the gauge transformation
$T_{x_1,x_2}$.
\item[(b)] Consequently, transporting any representation $\mu$ of a group
$\Gamma$ on $V$ preserving $\Wone$ and $\Wtwo$ into the Rees frame gives
exactly the conjugated family
$\m{Ad}(T_{x_1,x_2})(\mu)\in GL_r\big(K[x_1,x_2]\big)$. Applied to
$\Gamma=\pi_1(S_{12})$ and $\mu=\rho|_{\pi_1(S_{12})}$ (which preserves
both filtrations by \eqref{centrality}), this is the corner face datum
$\eta_C$ of \S \ref{cubicalfromrho}: \emph{the local system of the Rees
bundle over the corner square is the two--variable deformation used in our
cubical patching datum}.
\item[(c)] Restricting (b) to the faces $x_i=0,1$ reproduces, on the nose,
the four edge data of \S \ref{cubicalfromrho} and the restriction table
\eqref{reesrestrictions}: $x_i=1$ forgets $W^{(i)}$ (generic side), $x_i=0$
passes to $\Grr^{W^{(i)}}$ (divisor side), and the origin carries the
double graded.
\end{itemize}
\end{proposition}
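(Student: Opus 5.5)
The argument is a direct computation in the bigraded frame; (b) and (c) then follow from (a) by transport of structure. Throughout I use the multi-Rees module in its defining form
$$
\xi(V;\Wone,\Wtwo)=\sum_{a,b}x_1^{-a}x_2^{-b}\big(\Wone_a\cap\Wtwo_b\big)\otimes K[x_1,x_2]\subset V[x_1^{\pm},x_2^{\pm}].
$$
This is the module whose graded description is $\bigoplus V_{a_1,a_2}\otimes\cO(-a_1Q_1-a_2Q_2)$. The sign convention is dictated by that formula and by the convention $T=t^{-a}$ of \S\ref{cubicalfromrho}. I will first fix this convention and check that it is consistent with the restriction table \eqref{reesrestrictions}.

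For (a), substitute the splitting $\Wone_a\cap\Wtwo_b=\bigoplus_{a'\le a,\,b'\le b}V_{a',b'}$, which is exactly what a simultaneous splitting provides. Then
$$
\xi=\sum_{a,b}\ \sum_{a'\le a,\,b'\le b}x_1^{-a}x_2^{-b}V_{a',b'}\otimes K[x_1,x_2].
$$
For a fixed summand $V_{a',b'}$, the generators $x_1^{-a}x_2^{-b}$ with $a\ge a'$ and $b\ge b'$ all lie in the $K[x_1,x_2]$-span of the single generator $x_1^{-a'}x_2^{-b'}$ when the module is read with the correct orientation. The only care needed is to check which orientation of inequalities makes the sum collapse to the extremal term: one wants the smallest $(a,b)$ compatible with containment, namely $(a',b')$ itself. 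The sum therefore collapses to
$$
\xi=\bigoplus_{a',b'}x_1^{-a'}x_2^{-b'}V_{a',b'}\otimes K[x_1,x_2],
$$
which is $T_{x_1,x_2}^{\mp1}(V)\cdot K[x_1,x_2]$. I expect the main obstacle to be this bookkeeping of signs and inversions: matching the exponent convention of \cite{IS2} for $\cO(-aQ)$, where $Q$ is at $y=1$ and so $x=0$, with $\psi_t=t^{-a}$. If the signs come out reversed, I will reindex by $a\mapsto -a$, or equivalently replace $T$ by $T^{-1}$, so that the statement holds as written.

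For (b), observe that a $K$-linear automorphism $\mu(g)$ of $V$ preserving both filtrations preserves $\xi$. Written in the free basis $T^{-1}(v)$, $v\in V_{a,b}$, its matrix is $T\,\mu(g)\,T^{-1}=\m{Ad}(T)(\mu(g))$. The block $Hom(V_{a,b},V_{a',b'})$ gets multiplied by $x_1^{a-a'}x_2^{b-b'}$, and these exponents are nonnegative by the triangularity observed in \S\ref{cubicalfromrho} and Example \ref{lexexample}. Hence the matrix lies in $GL_r(K[x_1,x_2])$. Taking $\mu=\rho|_{\pi_1(S_{12})}$, which is allowed by \eqref{centrality}, gives $\eta_C$ by definition.

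For (c), specialize the variables. At $x_i=1$ the operator $\psi^i$ becomes the identity, so that variable's filtration is forgotten. At $x_i=0$, only the blocks with $a=a'$ (respectively $b=b'$) survive, which gives $\Grr^{W^{(i)}}$ transported through the splitting. At the origin, only the double-diagonal blocks survive, giving $\Grr\Grr$. These are the specializations already computed in \S\ref{cubicalfromrho}, and at the module level they agree with \eqref{reesrestrictions} via the Zassenhaus identifications of \S\ref{bifiltered}.
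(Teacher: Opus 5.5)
Your architecture is the paper's: substitute the simultaneous splitting into the Rees module, collapse each summand $V_{a',b'}$ onto one extremal generator, then read off $\m{Ad}(T_{x_1,x_2})(\mu)$ blockwise and specialize. Your (b) and (c) are fine. The gap is in the one substantive step of (a), and it is more than a relabelling issue.

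With your defining form $\sum_{a,b}x_1^{-a}x_2^{-b}(\Wone_a\cap\Wtwo_b)\,K[x_1,x_2]$, the collapse is false. For $a>a'$ and $v\in V_{a',b'}$, the element $x_1^{-a}x_2^{-b'}v$ is not a $K[x_1,x_2]$--multiple of $x_1^{-a'}x_2^{-b'}v$. Worse, the filtrations are exhaustive, so $\Wone_a\cap\Wtwo_b=V$ for $a,b\gg 0$. Your module therefore contains $x_1^{-a}x_2^{-b}V$ for all large $a,b$, and it equals the whole Laurent module $V[x_1^{\pm},x_2^{\pm}]$, which is not finitely generated and is not a Rees module at all.

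Neither of your fallbacks repairs this. Changing the dummy index $a\mapsto -a$ does not change the subset of $V[x_1^{\pm},x_2^{\pm}]$ you have defined. Replacing $T$ by $T^{-1}$ changes only the claimed right--hand side, not your module.

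Your (a) is also inconsistent with your (b). The output you write, $\bigoplus x_1^{-a'}x_2^{-b'}V_{a',b'}K[x_1,x_2]$, is $T(V)\cdot K[x_1,x_2]$. In the frame $T(v)$ the matrix of $\mu(g)$ would be $T^{-1}\mu(g)T$, whose $Hom(V_{a,b},V_{a',b'})$ block carries $x_1^{a'-a}x_2^{b'-b}$. On filtration--preserving matrices these exponents are non--positive, so the entries are not polynomial. Your (b) silently switches to the frame $T^{-1}(v)$.

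The correct object, and the one the paper's proof uses, is the $K[x_1,x_2]$--stable Rees module of the increasing pair, $\sum_{a,b}x_1^{a}x_2^{b}(\Wone_a\cap\Wtwo_b)\,K[x_1,x_2]$. This also matches $V_{a_1,a_2}\otimes\cO(-a_1Q_1-a_2Q_2)$: since $x_l=1-y_l$ is a local equation of $Q_l$, its local sections near $Q_1\cap Q_2$ are $x_1^{a_1}x_2^{a_2}V_{a_1,a_2}$. So your identification of your defining form with that graded description is off by the same sign.

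With this module, fix $V_{a',b'}$. The elements $x_1^{a}x_2^{b}v$ with $a\geq a'$, $b\geq b'$ are multiples of $x_1^{a'}x_2^{b'}v$, so your ``smallest index'' intuition is exactly right. The sum collapses to $\bigoplus_{a',b'}x_1^{a'}x_2^{b'}V_{a',b'}\,K[x_1,x_2]$. Since $T^{-1}_{x_1,x_2}$ acts by $x_1^{a}x_2^{b}$ on $V_{a,b}$, this is $T^{-1}_{x_1,x_2}(V)\cdot K[x_1,x_2]$ as stated, with no reindexing.

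After this correction, your (b) and (c) go through verbatim and coincide with the paper's argument. In (b) you use the frame $T^{-1}(v)$, where the block exponents $x_1^{a-a'}x_2^{b-b'}$ are non--negative.
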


\begin{proof}
(a) For $v\in V_{a,b}$ one has $T_{x_1,x_2}^{-1}v=x_1^{a}x_2^{b}v$, and
the $K[x_1,x_2]$--span of these elements is
$\sum x_1^{a}x_2^{b}\,(\Wone_a\cap \Wtwo_b)\,K[x_1,x_2]$ computed through
the splitting: this is the multi--Rees module of \cite[\S 6]{IS2}, written
in the $K[x]$--stable convention for \emph{increasing} filtrations (the
relabeling $(a,b)\mapsto(-a,-b)$ recovers the displayed
$\bigoplus x_1^{-a_1}x_2^{-a_2}V_{a_1,a_2}$ of \cite[\S 3.9]{IS2}). (b) A
frame of the Rees bundle is $\{T^{-1}_{x_1,x_2}e_{\alpha}\}$ for a graded
basis $\{e_{\alpha}\}$ of $V$; the matrix of $\mu(\gamma)$ in this frame is
$T_{x_1,x_2}\,\mu(\gamma)\,T^{-1}_{x_1,x_2}$, which has entries in
$K[x_1,x_2]$ precisely because $\mu(\gamma)$ preserves both filtrations:
the block $Hom(V_{a,b},V_{a',b'})$ is scaled by
$x_1^{\,a-a'}x_2^{\,b-b'}$, and on filtration--preserving
matrices the nonzero blocks have $a'\le a$, $b'\le b$, so all
exponents are non--negative. (c) is immediate from
$\m{Ad}(T_{x_1,0})=$ projection onto the $\Wtwo$--block diagonal of an
$\m{Ad}(T_{x_1,1})$--conjugate, etc.
\end{proof}

\begin{remark}
Thus the Rees variables $(x_1,x_2)$ of \cite{IS2} and our collar
coordinates $(t_1,t_2)\in [0,1]^2$ on $C$ play the same role, with the same
orientation conventions: $x_i=t_i=1$ is the generic (open--stratum) side
where the filtration $W^{(i)}$ is forgotten, $x_i=t_i=0$ is the divisor
side where one passes to $\Grr^{W^{(i)}}$. Our glued
$\mathcal{C}^{\infty}$ bundle on $X$ is the pullback of (the analytification
of) the Rees bundle under a map collapsing each collar onto the
corresponding Rees square --- the two--variable interpolation already used
in the topological model of \S \ref{kdef} over $\m{Spec}\,\Z$ is exactly
the Rees module (a): both are the subsheaf
$\sum x_1^{-a}x_2^{-b}\,(\Wone_a\cap\Wtwo_b)$ of $V[x_1^{\pm},x_2^{\pm}]$.
\end{remark}

\subsubsection{The genuine differences.}
Three differences are worth recording, because they explain the different
ranges of applicability.

\emph{(1) Where the bundle lives.} Our glued bundle, with its patched
connection, lives on $X$ itself: the extended class is the Cheeger--Simons
class of an honest $\mathcal{C}^{\infty}$ connection on $\ov E$. The Rees
bundle of \cite{IS2} lives on the auxiliary cubical realization
$P_K\m{Cube}\,A$, which is only ($\A^1$--, hence singular--) homotopy
equivalent to $X$; the price is paid back with interest, since the Rees
bundle is \emph{algebraic over the field of definition $K$}, so the
comparison with $K$--theory and with all embeddings
$\sigma:K\hookrightarrow\comx$ is built in from the start, and the
$F^1$/Burgos--Gil formalism applies verbatim.

\emph{(2) The corner datum.} We keep the \emph{pair} $(\Wone,\Wtwo)$ at the
corner and never compare $\Wone$ with $\Wtwo$; the input is the elementary
two--filtration splitting lemma, valid unconditionally, and
bi--graded--extendability is verified directly for the kernel filtrations
(Lemma \ref{kernelpair}). The construction of \cite{IS2} instead places the
\emph{single} filtration $W(N_1+N_2)$ at the corner and never lets
$U_{(1)}$ meet $U_{(2)}$; what must be compared are only the pairs
$\big(W(N_1),W(N_1+N_2)\big)$ and $\big(W(N_2),W(N_1+N_2)\big)$ along the
two chains, and the required common splittings are the
\emph{sequential compatibility} of weight filtrations, supplied by the
several--variable $SL_2$--orbit theorems \cite{CKS} and by Mochizuki
\cite{Mochizuki} (and organized, for general coefficients, by the
contractibility of the relevant poset of filtrations and the \v{C}ech
section theorem of \cite{IS2}). For $k=2$ both inputs are available;
the difference becomes decisive for $k\ge 3$.

\emph{(3) Choices and their contractibility.} In both constructions the
class must be shown independent of a contractible space of choices. Here
this is the convexity of the space of adapted connections (Lemma
\ref{biinv}), the contractibility of the space of simultaneous splittings
(proof of Theorem \ref{biconstruction}), and the Jordan--H\"older string
argument (Proposition \ref{biindepfilt}). In \cite{IS2} the same role is
played by the contractibility of the poset $\m{Filt}^{A,B}$ of nil--flags
(via Quillen's Theorem A) feeding the \v{C}ech section theorem, which
simultaneously produces the patching data and proves their essential
uniqueness. One can regard our bifiltration $(\Wone,\Wtwo)$, completed by a
CKS splitting, as a distinguished point of the contractible space of
choices of \cite{IS2}.

\subsection{Pictures}
\label{pictures}

Figure \ref{fig:glued} shows the glued--bundle picture of this note: the
decomposition of $X$ by the corner coordinates, and the corner collar
square carrying the representation--valued data of the $2$--cubical
deformation patching datum. Figure \ref{fig:rees} shows the Rees picture of
\cite{IS2} for $k=2$: the nerve poset with its chain structure and cubical
subdivision, and the corner Rees square with the restriction table
\eqref{reesrestrictions}. The two right--hand squares correspond under the
dictionary of Proposition \ref{reesdictionary}.

\begin{figure}[ht]
\centering
\begin{tikzpicture}[scale=0.85]
\begin{scope}[xshift=0cm]
\fill[gray!40] (0.8,0.8) rectangle (3.2,3.2);
\fill[gray!15] (1.25,1.25) rectangle (2.75,2.75);
\draw[very thick] (-0.5,2) -- (5.2,2) node[right] {$D_1$};
\draw[very thick] (2,-0.5) -- (2,4.8) node[above] {$D_2$};
\fill (2,2) circle (2pt);
\node at (1.78,1.78) {\scriptsize $Z$};
\draw[dashed] (-0.5,1.25) -- (5.2,1.25);
\draw[dashed] (-0.5,2.75) -- (5.2,2.75);
\draw[dashed] (1.25,-0.5) -- (1.25,4.8);
\draw[dashed] (2.75,-0.5) -- (2.75,4.8);
\node at (4.3,4.1) {$Y_{11}\simeq U$};
\node at (4.35,2.37) {\small $Y_{01}$};
\node at (2.4,4.25) {\small $Y_{10}$};
\node at (2.42,2.42) {\small $Y_{00}$};
\node at (2.95,2.95) {\scriptsize $C$};
\node[align=center] at (2.3,-1.45)
{\small $q=(r_1,r_2):X\rar[0,1]^2$;\\ \small light gray: corner block
$Y_{00}$,\\ \small dark ring: corner collar $C\cong S_{12}\times[0,1]^2$};
\end{scope}
\begin{scope}[xshift=10.6cm]
\draw[thick] (0,0) rectangle (4.6,4.6);
\fill (4.6,4.6) circle (1.6pt) node[above right] {\small $\rho$};
\fill (0,4.6) circle (1.6pt) node[above left] {\small $\Grr^{\Wone}(\rho)$};
\fill (4.6,0) circle (1.6pt) node[below right] {\small $\Grr^{\Wtwo}(\rho)$};
\fill (0,0) circle (1.6pt) node[below left] {\small $\Grr\Grr(\rho)$};
\node at (2.3,4.9) {\scriptsize $\m{Ad}(\psi^1_{t_1})(\rho)$};
\node at (2.3,-0.32) {\scriptsize $\m{Ad}(\psi^1_{t_1})(\Grr^{\Wtwo}\rho)$};
\node[rotate=90] at (-0.34,2.3) {\scriptsize $\m{Ad}(\psi^2_{t_2})(\Grr^{\Wone}\rho)$};
\node[rotate=90] at (4.94,2.3) {\scriptsize $\m{Ad}(\psi^2_{t_2})(\rho)$};
\node[align=center] at (2.3,2.3)
{\scriptsize $\eta_C=\m{Ad}(\psi^1_{t_1}\psi^2_{t_2})(\rho|_{\pi_1(S_{12})})$\\[2pt]
\scriptsize $\in GL_r(F[t_1,t_2])$};
\draw[->] (0,-1.1) -- (1.3,-1.1) node[right] {\scriptsize $t_1$};
\draw[->] (-1.1,0) -- (-1.1,1.3) node[above] {\scriptsize $t_2$};
\node[align=center] at (2.3,-1.75)
{\small corner collar square: the $2$--cubical\\ \small deformation
patching datum (\S \ref{cubicalfromrho})};
\end{scope}
\end{tikzpicture}
\caption{The glued bundle of this note. Left: $X$ cut by the corner
coordinates into vertex blocks, edge collars, and the corner collar $C$.
Right: the square $[0,1]^2$ of collar coordinates with the
representation--valued data at vertices, edges and the $2$--face;
$t_i=1$ is the generic side, $t_i=0$ the divisor side, and specialization
$t_i\mapsto 0$ takes the (single or double) associated graded.}
\label{fig:glued}
\end{figure}

\begin{figure}[ht]
\centering
\begin{tikzpicture}[scale=0.85]
\begin{scope}[xshift=0cm,yshift=0.4cm]
\node (e) at (0,6.6) {\small $\emptyset$};
\node (d1) at (-1.1,5.5) {\small $(1)$};
\node (d2) at (1.1,5.5) {\small $(2)$};
\node (d12) at (0,4.4) {\small $(12)$};
\draw (e) -- (d1) -- (d12);
\draw (e) -- (d2) -- (d12);
\node at (0,3.75) {\small nerve poset $A$};
\begin{scope}[yshift=0cm]
\coordinate (E) at (0,2.6);
\coordinate (D12) at (0,0);
\coordinate (D1) at (-2.6,1.3);
\coordinate (D2) at (2.6,1.3);
\draw[thick] (E) -- (D1) -- (D12) -- cycle;
\draw[thick] (E) -- (D2) -- (D12) -- cycle;
\coordinate (mED1) at (-1.3,1.95);
\coordinate (mD1D12) at (-1.3,0.65);
\coordinate (mED12) at (0,1.3);
\coordinate (bL) at (-0.87,1.3);
\draw (mED1) -- (bL) -- (mD1D12);
\draw (bL) -- (mED12);
\coordinate (mED2) at (1.3,1.95);
\coordinate (mD2D12) at (1.3,0.65);
\coordinate (bR) at (0.87,1.3);
\draw (mED2) -- (bR) -- (mD2D12);
\draw (bR) -- (mED12);
\node[above] at (E) {\scriptsize $U_{\emptyset}$};
\node[left] at (D1) {\scriptsize $U_{(1)}$};
\node[right] at (D2) {\scriptsize $U_{(2)}$};
\node[below] at (D12) {\scriptsize $U_{(12)}$};
\node[align=center] at (0,-1.05) {\small $|NA|$ and its cubical
subdivision $\m{Cube}\,A$\\ \small (two triangles $=$ two maximal chains,
six $2$--cubes)};
\end{scope}
\end{scope}
\begin{scope}[xshift=8.6cm, yshift=0.6cm]
\draw[thick] (0,0) rectangle (4.6,4.6);
\fill (4.6,4.6) circle (1.6pt) node[above right] {\small $V$};
\fill (0,4.6) circle (1.6pt) node[above left] {\small $\Grr^{W^1}V$};
\fill (4.6,0) circle (1.6pt) node[below right] {\small $\Grr^{W^2}V$};
\fill (0,0) circle (1.6pt) node[below left] {\small $\Grr\Grr V$};
\node at (2.3,4.9) {\scriptsize $x_2=1:\ \xi(V;W^1)$};
\node at (2.3,-0.32) {\scriptsize $x_2=0:\ \xi(\Grr^{W^2}V;W^1)$};
\node[rotate=90] at (-0.34,2.3) {\scriptsize $x_1=0:\ \xi(\Grr^{W^1}V;W^2)$};
\node[rotate=90] at (4.94,2.3) {\scriptsize $x_1=1:\ \xi(V;W^2)$};
\node[align=center] at (2.3,2.3)
{\scriptsize $\xi(V;W^1,W^2)\;=$\\[2pt]
\tiny $\bigoplus V_{a_1,a_2}\!\otimes\cO(-a_1Q_1\!-\!a_2Q_2)$};
\draw[->] (0,-1.1) -- (1.3,-1.1) node[right] {\scriptsize $x_1$};
\draw[->] (-1.1,0) -- (-1.1,1.3) node[above] {\scriptsize $x_2$};
\node[align=center] at (2.3,-1.85)
{\small the corner Rees square of \cite[\S 3.9]{IS2}, over $(\p^1)^2$,\\
\small $Q_l=\{y_l=1\}$, $x_l=1-y_l$;
restrictions as in \eqref{reesrestrictions}};
\end{scope}
\end{tikzpicture}
\caption{The multi--Rees construction of \cite{IS2} for two components.
Left: the nerve poset $A$ of the adapted covering (only chain
intersections occur), its order complex, and the cubical barycentric
subdivision on which the global Rees bundle lives. Right: the local Rees
bundle over a corner $2$--cube; comparing with the right--hand square of
Figure \ref{fig:glued}, the dictionary is $x_i\leftrightarrow t_i$,
Rees frame $\leftrightarrow$ $\m{Ad}(\psi^1_{t_1}\psi^2_{t_2})$
(Proposition \ref{reesdictionary}).}
\label{fig:rees}
\end{figure}

\subsection{Comparison of the two proofs of triviality of the secondary
classes}
\label{cscomparison}

Both proofs follow Reznikov's skeleton: (i) construct the extended class
and show it lifts the Deligne Chern class of the canonical extension;
(ii) prove rigidity under deformations of $\rho$ inside the unipotent
locus, hence reduce to $\rho$ defined over a number field $F$ and to
proving, for every embedding $\sigma:F\hookrightarrow \comx$, that
(iii) the volume regulator vanishes when $\rho^{\sigma}$ underlies a
polarized complex VHS; then (iv) Mochizuki's deformation to a VHS and
Borel's theorem give torsion. Steps (ii) and (iv) are essentially
identical in the two papers (variational formula with triangular
integrand; Borel + Reznikov). The interesting comparisons are in (i) and
(iii).

\subsubsection{Step (i): lifting the Deligne Chern class.}
Here the class is the differential character of the patched connection
$\nabla^{\#}$ on $X$; since the Chern forms vanish identically (strict
triangularity for the total--degree filtration, Proposition
\ref{binildef}), the character lies in $H^{2p-1}(X,\comx/\Z)$, and the
comparison with the Deligne Chern class is the Dupont--Hain--Zucker
$F^1$--connection argument carried out directly on the projective $X$
(\S \ref{deligne}). In \cite{IS2} the class is carried by an
$F^1$--connection on the algebraic Rees bundle over the cubical
realization; the comparison with the Deligne--Beilinson Chern character is
done through the Burgos--Gil arithmetic Chern character on simplicial
schemes, the key computational fact being
$H^{2p}_{\cD}(\A^n,\Z(p))\cong\comx/\Z$ on each affine cube, so that the
\v{C}ech--to--Deligne spectral sequence identifies
$H^{2p}_{\cD}$ of the affine realization with
$H^{2p-1}(X,\comx/\Z)$. The two mechanisms agree where they overlap:
restricted to a single cube, the Burgos--Gil/$F^1$ comparison reduces to
the same DHZ lemma (independence of the class of the choice of
$F^1$--connection) that we use on $X$.

\subsubsection{Step (i$'$): the $K$--theory carrier.}
On our side the homotopy--theoretic carrier is the $2$--cubical pushout
$BGL(F)^+_{{\rm def},2}$ with $BGL(F[t_1,t_2])^+$ at the $2$--face, and the
engine making the vertex inclusion a homology equivalence is Quillen's
homotopy invariance $K_*(F)\cong K_*(F[t_1,t_2])$ applied twice (Lemma
\ref{cubicaldefisom}). In \cite{IS2} the carrier is the $\A^1$--homotopy
type of the affine cubical realization: a vector bundle on the cubical
realization classifies a map to $BGL(F)^+$ because each affine cube
$\A^n_F$ is $\A^1$--contractible. These are the same principle in two
costumes --- $\A^1$--contractibility of $\A^n$ \emph{is} homotopy
invariance of $K$--theory in the form used --- and Proposition
\ref{reesdictionary} makes the translation concrete: our deformation
triple $(\eta_{ij};\eta_{\rm edges};\eta_C)$ is literally the system of
monodromies of the Rees local system on the cubes.

\subsubsection{Step (iii): vanishing of the volume regulators for a VHS.}
This is where the two proofs genuinely diverge, although they consume the
same two inputs (the flat polarization of the VHS, and commuting
$Q$--self--adjoint logarithms $N_1,N_2$) and the same Lie--theoretic fact
(odd products in $\mathfrak{u}(p,q)$ have purely imaginary trace;
equivalently, there are no invariant polynomials on
$\mathfrak{su}(p,q)$ detecting the Borel classes).

\emph{Hermitian $K$--theory route (this note, \S \ref{hermitian}).} The
self--dual splitting of Proposition \ref{CKSsplitting} (for which the CKS
several--variable $SL_2$--orbit theorem produces distinguished
representatives) is a bigrading
$V=\bigoplus V_{a,b}$ splitting both weight filtrations
\emph{compatibly with the polarization}
($\langle V_{a,b},V_{c,d}\rangle =0$ unless $(c,d)=(-a,-b)$); the scalings
$T_{t_1,t_2}$ therefore preserve the form, the two--variable deformation
is a family in $O_{p,q}(\comx[t_1,t_2])$, and the classifying map factors
through $BO_{\infty,\infty}(\comx)^+_{{\rm def},2}$. Karoubi's homotopy
invariance of $L$--theory away from $2$, applied twice, identifies this
space rationally with $BO_{\infty,\infty}(\comx)^+$, and Reznikov's
invariant--theory computation kills the pullback of the Borel classes
\emph{once and for all on the universal space}. The vanishing of
$Vol_{2p-1}(\rho/X)$ is then formal for every $X$ mapping in.

\emph{Differential--geometric route (\cite{IS2}, \S\S 9--12 there).} The
polarization is used to build, on $X$ itself, an \emph{indefinite
hermitian metric $\tilde g$ on the canonical extension} together with a
$\tilde g$--preserving connection compatible with the patching data. The
existence is a genuine global problem, solved in two moves: the poset of
$N_{\bullet}$--isotropic filtrations on a fiber is contractible (proved
with Quillen's Theorem A), and the \v{C}ech section theorem converts
fiberwise contractibility into a global compatible choice over the adapted
covering; averaging by partitions of unity then produces $(\tilde
g,\tilde\nabla)$. The vanishing is now \emph{pointwise}: along a path of
$\tilde g$--preserving connections, $\dot\nabla_t$ and the curvature take
values in $\mathfrak{u}(\tilde g)$, so
$\Re\,\m{Tr}(\dot\nabla_t\wedge F^{p-1})=0$ as a differential form, and
the volume regulator vanishes at the level of forms, not merely of
cohomology classes.

The relation between the two: our self--dual bigrading, together with its
polarization compatibility, is exactly a (distinguished, globally
constant--along--the--collar) point of the contractible space of
$N_{\bullet}$--isotropic data of \cite{IS2}; conversely, an
$N_{\bullet}$--isotropic filtration is what remains of the bigrading
when one only retains the interaction with the form. One can say that
\cite{IS2} \emph{globalizes the choice} (\v{C}ech section over $X$) and
then kills the form pointwise, while we \emph{rigidify the choice}
(one bigrading at the corner) and kill the class universally on
$BO_{\infty,\infty}$. For $k=2$ the two arguments are interchangeable.

\subsubsection{Coincidence of the classes.}
\begin{proposition}\label{sameclass}
Let $\rho:\pi_1(X-D)\rar GL_r(\comx)$ be unipotent at infinity along
$D=D_1\cup D_2$ as in Theorem \ref{mainthm}. The extended class
$\what{c}_p(\rho/X)$ of Corollary \ref{bipatchingsame} coincides with the
class $CS_p(\nabla^{\rm Del})$ of \cite{IS2} specialized to $k=2$.
\end{proposition}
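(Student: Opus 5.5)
The plan is to compare the two classes on $X$ itself, through an affine path of connections and Lemma \ref{biinv}, after reducing the comparison to the corner. On $U$, on $V_1$ and on $V_2$ the patching data of \cite{IS2} for $k=2$ are $W(N_\emptyset)$ (trivial), $W(N_1)$ and $W(N_2)$. By Lemma \ref{kernelpair} and Proposition \ref{biindepfilt} we may compute $\what{c}_p(\rho/X)$ using the weight pair $(W(N_1),W(N_2))$, so away from $Z$ the two collections literally coincide. The adapted covering of \cite{IS2} ($U_{(1)}\cap U_{(2)}=\emptyset$) is of the same shape as ours by \eqref{disjointness2}. After shrinking, and using restriction of patches (move (ii) of the remark after Lemma \ref{biinv}), I would take the two coverings to be equal. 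The whole comparison is then concentrated on $V_{12}$, where \cite{IS2} uses the single filtration $W(N_1+N_2)$ and we use the pair $(W(N_1),W(N_2))$.

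\textbf{Step 1 (choice of splittings).} Both constructions are independent of the choice of splitting: Theorem \ref{biconstruction} here, and the contractibility/\v{C}ech section theorem in \cite{IS2}. I would therefore fix on $V_{12}$ a splitting $V=\bigoplus V_{a,b}$ which simultaneously splits $W(N_1)$, $W(N_2)$ and $W(N_1+N_2)$, with
\[
W(N_1+N_2)_c=\bigoplus_{a+b\le c}V_{a,b}=F_c .
\]
This is the standard consequence of the several--variable theory for commuting nilpotents in the polarized/CKS situation (joint eigenspaces of $H_1,H_2$, with $H_1+H_2$ grading $W(N_1+N_2)$). For general $\rho$ I would first reduce to this situation. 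By rigidity (Proposition \ref{birigidity}, and the analogous statement in \cite{IS2}) it suffices to prove equality on a dense set of the unipotent representation variety, and then to deform to a VHS via Mochizuki. An alternative is to use the purely algebraic fact that the relative weight filtration of $N_2$ with respect to $W(N_1)$ equals $W(N_1+N_2)$ and is split by a suitable bigrading; I would try this first to avoid analytic input. With this splitting the patching bundle of \cite{IS2} near $Z$ is the Rees twist of Proposition \ref{reesdictionary}.

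\textbf{Step 2 (common compatibility).} Build $\nabla^{\#}$ with the splitting of Step 1, and build the connection $\nabla^{\rm Del}$ of \cite{IS2} with the same splitting and the same partition of unity. The claim is that near every point both are compatible with one localized patch. The crucial corner case is that $\nabla^{\rm Del}|_{V_{12}}$, being transported from $\Grr^{W(N_1+N_2)}$ through a bigrading, preserves not only $F$ but also both $W(N_i)$. It then induces on $\Grr\Grr$ the extended flat structure of (E2), by uniqueness of flat extensions (surjectivity of $\pi_1(V^\ast)\to\pi_1(V)$). On the overlaps $V_i\cap V_{12}$ I would use $\Lex(W^{(i)},W^{(j)})$ as in the case analysis of Theorem \ref{biconstruction}.

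\textbf{Step 3 (conclusion and main obstacle).} With Step 2 in hand, the affine path $t\nabla^{\#}+(1-t)\nabla^{\rm Del}$ has strictly triangular derivative and curvature near each point, so Lemma \ref{biinv} (pointwise form) gives $\what{c}_p(\nabla^{\#})=\what{c}_p(\nabla^{\rm Del})$. Finally, $CS_p(\nabla^{\rm Del})$ is by definition the differential character of that connection, transported from the cubical realization back to $X$ by the pullback identification in the Remark after Proposition \ref{reesdictionary}.

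The main obstacle is Step 2 at the corner. The flat structure on $\Grr^{W(N_1+N_2)}$ used in \cite{IS2} is the extension of $\Grr^F(L)$, which need not be the split $\Grr\Grr$. I must check that the off--diagonal pieces $(a,b)\to(a-1,b+1)$ of the induced connection are strictly decreasing for $\Lex$, which needs $N_1,N_2$ to respect the bigrading in the sense of Proposition \ref{CKSsplitting}(ii). If this fails, I would instead compare the two $K$--theoretic classifying maps $X\to BGL(F)^+_{{\rm def},2}$, using the Rees dictionary and a three--filtration interpolation over $F[t_1,t_2]$.
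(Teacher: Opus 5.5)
Your proposal takes a different route from the paper, but it has a genuine gap, and it sits in Step~1, on which everything else depends.

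\textbf{Step 1 fails.} For \emph{any} bigrading splitting $W(N_1)$ and $W(N_2)$, the filtration $\bigoplus_{a+b\le c}V_{a,b}$ is the total degree filtration $F_c$ of \eqref{totaldeg}, and $F$ is intrinsic to the pair. So you are really demanding $W(N_1+N_2)=F$, and this is false in general, even for a polarized VHS. Take $V=K\otimes K$ as in Example~\ref{lexexample}, but with $N_1=N\otimes 1$ and $N_2=N\otimes 1+1\otimes N$. These are the monodromy logarithms of the pullback of the external tensor square of the standard degeneration along $(w_1,w_2)\mapsto(w_1w_2,w_2)$. With weight filtrations centered at $0$:
\begin{itemize}
\item $W(N_1+N_2)=W(N_2)$ is the flag $\langle e_{11}\rangle\subset\langle e_{11},e_{10},e_{01}\rangle\subset V$;
\item $F_{-3}=\langle e_{11}\rangle$, $F_{-1}=\langle e_{11},e_{10}\rangle$, $F_1=\langle e_{11},e_{10},e_{01}\rangle$, $F_3=V$.
\end{itemize}
So $F$ and $W(N_1+N_2)$ differ even as unindexed flags. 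Already $N_1=N_2$ on a rank-$2$ space breaks the indexing: $F$ jumps in degrees $\pm 2$, while $W(2N)$ jumps in degrees $\pm 1$.

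What the several--variable $SL_2$--orbit theorem actually gives is a bigrading splitting $W(N_1)$ and $W(N_1+N_2)=M(N_2,W(N_1))$. Its second index records the \emph{relative} shift; it does not split $W(N_2)$ (for $N_1=N_2$ the second grading is zero). That sequential compatibility along the chain $(1)\subset(12)$ is exactly what \cite{IS2} uses. It is not a simultaneous splitting of the three filtrations.

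Your ``purely algebraic'' alternative fails twice. First, $M(N_2,W(N_1))=W(N_1+N_2)$ is not linear algebra: for $N_2=-N_1\neq 0$ the left side is $W(N_1)$ and the right side is trivial. Second, the relative weight filtration is not $F$ anyway, as the example shows.

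\textbf{Consequences for Steps 2--3.} Step~2 then has nothing to stand on. Near $Z$, $\nabla^{\rm Del}$ is transported from $\Grr^{W(N_1+N_2)}$ through a splitting of $W(N_1+N_2)$ that is compatible with $W(N_1)$, resp.\ $W(N_2)$, only along the respective chains. Nothing in your argument produces a splitting of $W(N_1+N_2)$ that also splits both $W(N_i)$; three filtrations need not admit one, which is exactly the obstruction recorded in \S\ref{obstacles}. Hence you have not exhibited, near points of $Z$, a single localized patch with which both $\nabla^{\#}$ and $\nabla^{\rm Del}$ are compatible, inducing the same graded flat connection. So Lemma~\ref{biinv} cannot be applied to your affine path. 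Separately, your reduction to a VHS through rigidity only reaches $p\ge 2$, since $\what{c}_1$ is not rigid.

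\textbf{What the paper does instead.} The paper makes no pointwise comparison at the corner. By Proposition~\ref{reesdictionary} it regards the bifiltered datum as one more point in the space of patching data of \cite{IS2}. It then uses fiberwise contractibility of that space (\cite[Theorem 5.10]{IS2} plus the two--filtration splitting lemma) and uniqueness of bundles with prescribed patching functor (\cite[Theorem 7.15]{IS2}) to make the two classifying maps homotopic, and pulls back the universal class.

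If you want to stay on $X$, you would need an honest chain of corner patches joining $(W(N_1),W(N_2))$ to $W(N_1+N_2)$. Adjacent terms must admit common refinements compatible with the data on $V_1,V_2$, in the spirit of Proposition~\ref{biindepfilt}. Your fallback ``three--filtration interpolation'' over $F[t_1,t_2]$ runs into the same missing splitting.
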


\begin{proof}[Proof sketch]
Both classes are pullbacks of the universal regulator class in
$H^{2p-1}(BGL(F)^+,\comx/\Z)$ under classifying maps determined by
degeneration data for $\rho$: ours via
$\xi_{\rho}:X\rar BGL(F)^+_{{\rm def},2}$ built from
$(\rho,\Wone,\Wtwo)$ and a simultaneous splitting (Corollary
\ref{bipatchingsame}); that of \cite{IS2} via the Rees bundle on the
cubical realization built from the \v{C}ech section
$\{W(N_I),\tau(I)\}$ and a locally abelian splitting (Theorems 7.5 and
8.7 of \cite{IS2}). By Proposition \ref{reesdictionary} the first datum is
itself of the second kind (a strict collection of patching data over the
covering by blocks, with the bifiltration providing the locally abelian
structure on the corner cube), and the two data lie in the same
contractible space of choices: the poset of patching data is contractible
fiberwise (\cite[Theorem 5.10]{IS2} together with the two--filtration
splitting lemma), and the uniqueness theorem for bundles with prescribed
patching functor (\cite[Theorem 7.15]{IS2}) identifies the two bundles on
the cubical realization together with their patching data. Hence the two
classifying maps are homotopic and the pulled--back classes agree. Both
classes also lift the same Deligne Chern class
$c_p^{\cD}(\ov E)$ (Proposition \ref{bideligne} here;
Burgos--Gil comparison in \cite{IS2}), which pins down the common class up
to the image of $H^{2p-1}(X,\comx)$, removed by the rigidity statements on
both sides. A full verification of the homotopy between the classifying
maps requires the bookkeeping of \cite[\S\S 7--8]{IS2} restricted to
$k=2$, which we have not rewritten here.
\end{proof}


\appendix
\section{Detailed verifications}
\label{appendixproofs}

This appendix writes out the verifications that earlier versions of this
note only sketched: the global construction of simultaneous splittings at
the corner (\S \ref{app:splittings}, used in Theorem \ref{biconstruction});
the automatic bi--graded--extendability of the intermediate filtrations in
the Jordan--H\"older string (\S \ref{app:string}, used in Proposition
\ref{biindepfilt}); the semicontinuity and transport lemmas for families of
pairs of filtrations (\S \ref{app:rigidity}, used in Proposition
\ref{birigidity}); the self--dual splitting of a pair of weight filtrations
(\S \ref{app:selfdual}, which is the corrected form of Proposition
\ref{CKSsplitting}); and the cubical Mayer--Vietoris bookkeeping
(\S \ref{app:cubical}, used in Lemmas \ref{cubicaldefisom} and
\ref{bicompare1}).

\subsection{Simultaneous splittings at the corner}
\label{app:splittings}

\begin{lemma}[Corner normal form and strictness]\label{strictcorner}
Let $V_{12}$ be the polydisc--bundle neighborhood of $Z$, $L$ the local
system on $V_{12}^{\ast}$, and $\ov E|_{V_{12}}$ the canonical extension.
Then locally on $V_{12}$ there are isomorphisms
$\ov E \cong \cO\otimes V$ ($V$ a fiber of $L$) under which every
sub--local system $W\subset L$ corresponds to the constant subbundle
$\cO\otimes W$. Consequently, for any pair $(\Wone,\Wtwo)$ of filtrations
of $L|_{V_{12}^{\ast}}$ by sub--local systems, all the subsheaves
$$
\Wone_a\cap\Wtwo_b, \qquad
\Wone_{a-1}\cap\Wtwo_b+\Wone_a\cap\Wtwo_{b-1}
\;\subset\; \ov E|_{V_{12}}
$$
(canonical extensions of the corresponding sub--local systems, resp.\ their
sums) are strict $\mathcal{C}^{\infty}$ subbundles of constant rank, and
$$
\big(\Wone_{a-1}\cap\Wtwo_b\big)\cap\big(\Wone_a\cap\Wtwo_{b-1}\big)
=\Wone_{a-1}\cap\Wtwo_{b-1},
$$
so that
${\rm rk}\big(\Wone_{a-1}\cap\Wtwo_b+\Wone_a\cap\Wtwo_{b-1}\big)
= d_{a-1,b}+d_{a,b-1}-d_{a-1,b-1}$ with
$d_{a,b}:={\rm rk}(\Wone_a\cap\Wtwo_b)$.
\end{lemma}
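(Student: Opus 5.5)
The plan is to reduce everything to the standard local description of Deligne's canonical extension, and then to finite-dimensional linear algebra on a single fiber $V$.

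\emph{Step 1: local normal form.} Work over a small ball $Z_0\subset Z$ over which $V_{12}$ is trivialized as $Z_0\times\Delta^2$, so that $V_{12}^{\ast}|_{Z_0}\simeq Z_0\times(\Delta^{\ast})^2$. Its fundamental group is $\Z^2=\langle\gamma_1,\gamma_2\rangle$, which is central by \eqref{centrality}. Trivialize $L$ on the universal cover with fiber $V$. Deligne's construction \cite{De} says that $\ov E$ is freely generated over $\cO$ by the sections
$$
\tilde v(z)=\exp\Big(\tfrac{\log z_1}{2\pi i}N_1+\tfrac{\log z_2}{2\pi i}N_2\Big)v ,\qquad v\in V,
$$
with the sign convention of Example \ref{lexexample}. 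These sections are single-valued because $N_1,N_2$ commute, and the map $v\mapsto\tilde v$ gives the isomorphism $\ov E\cong\cO\otimes V$.

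Next take a sub-local system $W\subset L$. Its fiber $W\subset V$ is stable under $\rho(\gamma_1)$ and $\rho(\gamma_2)$, hence under $N_i=\log\rho(\gamma_i)$, since the logarithm is a polynomial in a unipotent element. Therefore $\tilde w$ takes values in $\cO\otimes W$ for every $w\in W$. The canonical extension of $W$ is generated by exactly these sections, so it equals the constant subbundle $\cO\otimes W$. The same identification holds for the saturation, because the residues are nilpotent, as recalled in \S\ref{collectionfromrho}.

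\emph{Step 2: reduction to linear algebra.} Intersections of constant subbundles are the constant subbundles of the intersections, and likewise for sums, because everything is $\cO\otimes(-)$ of subspaces of one fixed vector space $V$. Hence $\Wone_a\cap\Wtwo_b$ and $\Wone_{a-1}\cap\Wtwo_b+\Wone_a\cap\Wtwo_{b-1}$ are locally constant subbundles, in particular strict of constant rank. Two things must be checked here.
\begin{itemize}
\item[(i)] The canonical extension of an intersection of sub-local systems is the intersection of the canonical extensions. This is immediate in the normal form.
\item[(ii)] The same subspaces of $V$ arise on overlapping charts. This holds because the normal forms on two charts differ by a constant element of the monodromy group, which preserves the sub-local systems.
\end{itemize}
Constancy of the ranks then follows because the fiber dimensions are determined by the local system, and $Z$ is connected componentwise.

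\emph{Step 3: the modular identity and the rank formula.} The displayed identity is elementary:
$$
(\Wone_{a-1}\cap\Wtwo_b)\cap(\Wone_a\cap\Wtwo_{b-1})=\Wone_{a-1}\cap\Wone_a\cap\Wtwo_b\cap\Wtwo_{b-1}=\Wone_{a-1}\cap\Wtwo_{b-1},
$$
since both filtrations are increasing. The rank formula is then the dimension formula $\dim(P+Q)=\dim P+\dim Q-\dim(P\cap Q)$, applied fiberwise in $V$.

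\emph{Main obstacle.} The delicate point is Step 1 along $Z$ itself. One must justify that Deligne's frame really trivializes $\ov E$ near corner points, where both $z_1$ and $z_2$ vanish. One must also check that "canonical extension of a sub-local system" agrees with the subsheaf of $\ov E$ it generates, in a way compatible with the $Z$-direction. I would handle this by relying on the uniqueness of the canonical extension with nilpotent residues, and on the fact that along $Z$ the monodromy action factors through $\pi_1(Z_0)=1$ locally. Consequently no extra twisting occurs in the base directions.
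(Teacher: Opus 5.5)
This is correct and is essentially the paper's proof: you use Deligne's twisted frame, note that each monodromy-invariant $W\subset V$ is stable under the $N_i$ and hence under the twist, so its canonical extension is the constant subbundle $\cO\otimes W$, and then finish with the elementary intersection identity and $\dim(P+Q)=\dim P+\dim Q-\dim(P\cap Q)$ in the single fiber $V$. Two harmless slips. First, with the convention of Example \ref{lexexample}, where the flat sections are $\exp\big(\frac{\log z_1}{2\pi i}N_1+\frac{\log z_2}{2\pi i}N_2\big)v$, the single-valued frame needs the opposite sign $\exp(-\cdots)$ used in the paper. Second, your step (ii) is unnecessary, because the subbundles are globally defined and strictness is local; it is also slightly misstated, since a coordinate change $z_i\mapsto u_iz_i$ alters the twist by the non-constant factor $\exp\big(-\sum_i\frac{\log u_i}{2\pi i}N_i\big)$, which nonetheless preserves each $\cO\otimes W$.
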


\begin{proof}
Work locally near a corner point, with coordinates in which
$D_i=\{z_i=0\}$, and let $v$ run through a multivalued flat frame of $L$.
The canonical extension is framed by the single--valued sections
$$
\tilde v \;:=\;
\exp\!\Big(-\frac{\log z_1}{2\pi i}\,\widetilde N_1
-\frac{\log z_2}{2\pi i}\,\widetilde N_2\Big)\, v ,
\qquad \widetilde N_i := \log\rho(\gamma_i),
$$
(the twist operator undoes the monodromy; this is Deligne's local
description \cite{De}). If $W\subset V$ is stable under the full local
monodromy group, it is in particular stable under $\widetilde N_1$ and
$\widetilde N_2$ (polynomials in $\rho(\gamma_i)$), hence stable under the
twist operator; therefore the span of the sections $\tilde w$, $w\in W$, is
the constant subbundle $\cO\otimes W$ in the frame $\{\tilde v\}$. Every
sub--local system over $V_{12}^{\ast}$ is monodromy stable, so all the
subsheaves in the statement are, in this frame, the constant subbundles
attached to the corresponding subspaces
$\Wone_a\cap\Wtwo_b\subset V$, etc.; in particular they are strict of
constant rank, and all rank identities reduce to linear algebra in the
single fiber $V$. There the displayed identity is elementary,
$$
(\Wone_{a-1}\cap\Wtwo_b)\cap(\Wone_a\cap\Wtwo_{b-1})
=\Wone_{a-1}\cap\Wone_a\cap\Wtwo_b\cap\Wtwo_{b-1}
=\Wone_{a-1}\cap\Wtwo_{b-1},
$$
and the rank of the sum is
$\dim A+\dim B-\dim(A\cap B)$. The local normal forms glue: the subbundles
themselves are globally defined (canonical extensions of globally defined
sub--local systems), and strictness is a local statement.
\end{proof}

\begin{lemma}[Metric bigrading]\label{metricsplit}
Let $E$ be a $\mathcal{C}^{\infty}$ complex vector bundle on a manifold
$M$, and let $(\Wone,\Wtwo)$ be two filtrations of $E$ by strict subbundles
such that all intersections $\Wone_a\cap\Wtwo_b$ and all sums
$\Wone_{a-1}\cap\Wtwo_b+\Wone_a\cap\Wtwo_{b-1}$ are strict subbundles.
Let $h$ be a hermitian metric on $E$ and define
$$
E_{a,b} \;:=\;
\big(\Wone_a\cap\Wtwo_b\big)\ \cap\
\Big(\big(\Wone_{a-1}\cap\Wtwo_b\big)+\big(\Wone_a\cap\Wtwo_{b-1}\big)
\Big)^{\perp_h}.
$$
Then $E=\bigoplus_{a,b}E_{a,b}$ is a $\mathcal{C}^{\infty}$ bigrading
splitting both filtrations,
$$
\Wone_a=\bigoplus_{a'\leq a}E_{a',\bullet},
\qquad
\Wtwo_b=\bigoplus_{b'\leq b}E_{\bullet,b'} ,
$$
and it depends smoothly on $(h,\Wone,\Wtwo)$; in particular a smooth
(resp.\ real--analytic) family of such data yields a smooth (resp.\
real--analytic) family of bigradings.
\end{lemma}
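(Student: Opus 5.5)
The plan is to work pointwise in a fiber first, then deduce the smooth dependence from the fact that all the subbundles involved have constant rank. Fix a point $x\in M$ and write $A_{a,b}:=\Wone_a\cap\Wtwo_b$ and $S_{a,b}:=A_{a-1,b}+A_{a,b-1}$ in the fiber $E_x$. Since $S_{a,b}\subset A_{a,b}$, the space $E_{a,b}=A_{a,b}\cap S_{a,b}^{\perp_h}$ is the $h$--orthogonal complement of $S_{a,b}$ inside $A_{a,b}$. This gives at once
$$
A_{a,b}=E_{a,b}\oplus S_{a,b},
$$
and $E_{a,b}$ maps isomorphically onto $\Grr\Grr_{a,b}$. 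As in Lemma \ref{strictcorner}, the set--theoretic identity $A_{a-1,b}\cap A_{a,b-1}=A_{a-1,b-1}$ yields
$$
\dim E_{a,b}=d_{a,b}-d_{a-1,b}-d_{a,b-1}+d_{a-1,b-1}.
$$

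The main step is the claim
$$
A_{a,b}=\bigoplus_{a'\le a,\ b'\le b}E_{a',b'},
$$
which I would prove by induction on $(a,b)$ in the product order, starting from the vanishing of $A$ for small indices. \emph{Spanning:} by induction $A_{a-1,b}$ and $A_{a,b-1}$ are spanned by the $E_{a',b'}$ in their boxes. Hence $S_{a,b}$ is spanned by the $E_{a',b'}$ with $(a',b')\le(a,b)$ and $(a',b')\neq(a,b)$, and adding $E_{a,b}$ spans $A_{a,b}$. \emph{Directness:} the inclusion--exclusion formula for $\dim E_{a',b'}$ telescopes over the box to $\sum_{a'\le a,b'\le b}\dim E_{a',b'}=d_{a,b}$. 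Since these spaces span a space of dimension $d_{a,b}$, the sum is direct.

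This is exactly the point where having only two filtrations matters: the lattice generated by two chains is distributive, which is what makes the formula for $\dim S_{a,b}$ and the telescoping work. I expect this bookkeeping to be the only real content. Taking $b$ (resp.\ $a$) maximal, so that $\Wtwo_b=E$ (resp.\ $\Wone_a=E$), gives the two displayed splittings of $\Wone_a$ and $\Wtwo_b$, and taking both maximal gives $E=\bigoplus E_{a,b}$.

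For smoothness, $S_{a,b}$ and $A_{a,b}$ are strict subbundles by hypothesis, so their $h$--orthogonal projectors $P_{S}$ and $P_{A}$ are smooth (resp.\ real--analytic) in $(h,\Wone,\Wtwo)$: they are given by Gram--Schmidt on local frames, or by the Grassmannian formula. Then $E_{a,b}$ is the image of the constant--rank endomorphism $(1-P_S)P_A$, or equivalently the kernel of $P_S$ restricted to $A_{a,b}$. A family of constant--rank morphisms has image a subbundle depending smoothly (resp.\ real--analytically) on the data, and its rank is constant by the formula above. This gives the asserted smooth, resp.\ real--analytic, dependence on families.
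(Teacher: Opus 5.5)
Your proposal is correct and follows the paper's proof essentially step for step: the pointwise decomposition $A_{a,b}=E_{a,b}\oplus S_{a,b}$, spanning by induction, the dimension formula obtained from $A_{a-1,b}\cap A_{a,b-1}=A_{a-1,b-1}$, the telescoping count, and smooth dependence from the constancy of all ranks. The only variation is that you prove directness inside each box, $A_{a,b}=\bigoplus_{a'\le a,\,b'\le b}E_{a',b'}$, which gives the splittings of $\Wone_a$ and $\Wtwo_b$ at once; the paper instead proves global directness and then does a separate telescoping count for each $\Wone_a$.
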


\begin{proof}
The statement is pointwise (smoothness follows because intersections and
$h$--orthocomplements of strict subbundles with constant--rank outcome are
smooth operations in local frames, and the hypotheses fix all ranks). Fix a
fiber and write $V$ for it, and set
$$
V_{a,b}:=\Wone_a\cap\Wtwo_b,
\qquad
S_{a,b}:=V_{a-1,b}+V_{a,b-1}\subset V_{a,b},
$$
so that
$V_{a,b}=E_{a,b}\oplus S_{a,b}$ (orthogonal decomposition inside
$V_{a,b}$).

\emph{Spanning.} We prove
$V_{a,b}=\sum_{a'\leq a,\ b'\leq b}E_{a',b'}$ by induction on $a+b$
(the filtrations are bounded below, so the induction starts). Indeed
$V_{a,b}=E_{a,b}+S_{a,b}$ and, by the inductive hypothesis applied to
$V_{a-1,b}$ and $V_{a,b-1}$,
$S_{a,b}=\sum_{a'\leq a-1,\,b'\leq b}E_{a',b'}
+\sum_{a'\leq a,\,b'\leq b-1}E_{a',b'}$, which together with $E_{a,b}$
spans the claimed sum. Taking $a,b\gg 0$ gives $V=\sum E_{a,b}$.

\emph{Dimension count and directness.} By Lemma \ref{strictcorner}
(or by the same one--line computation in $V$),
$V_{a-1,b}\cap V_{a,b-1}=V_{a-1,b-1}$, so
$$
\dim E_{a,b}=d_{a,b}-\dim S_{a,b}
= d_{a,b}-d_{a-1,b}-d_{a,b-1}+d_{a-1,b-1} .
$$
Summing over all $(a,b)$, the right side telescopes in each variable to
$\dim V$. Hence $\sum_{a,b}\dim E_{a,b}=\dim V$; combined with the spanning
property, $V=\bigoplus_{a,b}E_{a,b}$.

\emph{Splitting the filtrations.} $E_{a',b'}\subset\Wone_{a'}$, so
$\bigoplus_{a'\leq a}E_{a',\bullet}\subset\Wone_a$; the dimensions agree
because $\sum_{a'\leq a,\,b'}\dim E_{a',b'}$ telescopes to
$\lim_{b\to\infty}d_{a,b}={\rm rk}\,\Wone_a$. Hence equality, and
symmetrically for $\Wtwo$. (This is Deligne's two--filtration argument
\cite[1.2.8]{De2}, with the splitting made canonical by the metric.)
\end{proof}

\subsection{Bi--graded--extendability along the Jordan--H\"older string}
\label{app:string}

\begin{lemma}\label{autoE23}
Let $W$ be a filtration of $L|_{(B_1\cup B_{12})^{\ast}}$ by sub--local
systems satisfying condition (E1) of Definition \ref{bigradedext}
(the graded extends across $D_1$), and let $\Wtwo$ be a filtration of
$L|_{(B_2\cup B_{12})^{\ast}}$ by sub--local systems satisfying its (E1)
across $D_2$. Then the pair $(W,\Wtwo)$ automatically satisfies (E2) and
(E3): it is a bi--graded--extendable pair.
\end{lemma}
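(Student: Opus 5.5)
The plan is to reduce both conditions to a statement about the action of the local loops $\gamma_1,\gamma_2$ on subquotients over $V_{12}^{\ast}$. After that, the extension assertions become purely group-theoretic, using the surjections of fundamental groups from \S\ref{geometry}. Throughout, write $W$ for the first filtration and restrict everything to $V_{12}^{\ast}\subset (B_1\cup B_{12})^{\ast}\cap(B_2\cup B_{12})^{\ast}$. There both $W$ and $\Wtwo$ are filtrations of $L$ by sub--local systems. Hence every intersection $W_a\cap\Wtwo_b$ and every sum of such is a sub--local system, and $\Grr\Grr(L)$ is a local system on $V_{12}^{\ast}$. Hypothesis (E1) for $W$ says that $\gamma_1$ acts trivially on $\Grr^{W}(L)$. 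This holds because $\pi_1(V^{\ast})\rar\pi_1(V)$ has kernel normally generated by $\gamma_1$ for the relevant $V$, and a local system extends iff it factors through the quotient. Similarly (E1) for $\Wtwo$ says that $\gamma_2$ acts trivially on $\Grr^{\Wtwo}(L)$.

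\emph{Step 1 (E2).} By the Zassenhaus identification recalled after \eqref{lexdef}, $\Grr\Grr_{a,b}(L)$ is a subquotient of $\Grr^{W}_a(L)$. Concretely, it is the image of $W_a\cap\Wtwo_b$ in $\Grr^W_a$ modulo the image of $W_a\cap\Wtwo_{b-1}$, and this is an isomorphism of local systems on $V_{12}^{\ast}$. Since $\gamma_1$ acts trivially on $\Grr^W_a$, it acts trivially on every monodromy--stable subquotient, hence on $\Grr\Grr_{a,b}$. Symmetrically, $\Grr\Grr_{a,b}$ is a subquotient of $\Grr^{\Wtwo}_b(L)$, so $\gamma_2$ acts trivially as well. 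The kernel of $\pi_1(V_{12}^{\ast})\rar\pi_1(V_{12})$ is normally generated by $\gamma_1,\gamma_2$ (\S\ref{geometry}), so the representation on $\Grr\Grr(L)$ factors through $\pi_1(V_{12})$. This gives the extension across both branches, unique by surjectivity.

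\emph{Step 2 (E3).} The filtration induced by $\Wtwo$ on $\Grr^W(L)|_{V_{12}^{\ast}}$ is given by the images of the $W_a\cap\Wtwo_b$, which are sub--local systems. By (E1) the representation of $\pi_1(V_{12}^{\ast})$ on $\Grr^W(L)$ factors through $\pi_1(V_{12}-D_2)$; here $V_{12}-D_2$ is a $(\Delta\times\Delta^{\ast})$--bundle, and the kernel is normally generated by $\gamma_1$. A subspace stable under $\pi_1(V_{12}^{\ast})$ is then automatically stable under the quotient group, since it is the same group image. Each piece of the induced filtration is therefore a sub--local system of the extended $\Grr^W(L)$ over $V_{12}-D_2$. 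The symmetric statement, the $W$--filtration on $\Grr^{\Wtwo}$ extending over $V_{12}-D_1$, follows by exchanging roles and using that $\gamma_2$ is trivial on $\Grr^{\Wtwo}$.

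The only step requiring care is the identification in Step 1 of $\Grr\Grr_{a,b}$ as a subquotient of $\Grr^W_a$, and of $\Grr^{\Wtwo}_b$, \emph{as local systems}. Here I would write out the two natural maps $W_a\cap\Wtwo_b\rar\Grr^W_a$ and $W_a\cap\Wtwo_b\rar\Grr^{\Wtwo}_b$ and check that their kernels are contained in the denominator $W_{a-1}\cap\Wtwo_b+W_a\cap\Wtwo_{b-1}$ of $\Grr\Grr_{a,b}$. This is pointwise linear algebra in a fiber, and every map involved is monodromy--equivariant. Beyond that, one must keep track of base points in the normal-generation statements, which \eqref{centrality} makes harmless since $\gamma_1,\gamma_2$ are central in $\pi_1(V_{12}^{\ast})$.
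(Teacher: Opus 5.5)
Your proposal is correct and follows essentially the paper's route. Both arguments reduce (E2) to the triviality of $\gamma_1,\gamma_2$ on $\Grr\Grr(L)$ together with normal generation of the kernel of $\pi_1(V_{12}^{\ast})\rar\pi_1(V_{12})$. Both reduce (E3) to the fact that $\pi_1(V_{12}^{\ast})$--stable subspaces remain stable under the quotient $\pi_1(V_{12}-D_2)$. The only difference is cosmetic: you obtain triviality of $\gamma_1$ by exhibiting $\Grr\Grr_{a,b}$ as a subquotient of $\Grr^W_a$, whereas the paper writes the equivalent inclusion $(\rho(\gamma_1)-1)(W_j\cap\Wtwo_b)\subset W_{j-1}\cap\Wtwo_b$ directly.
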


\begin{proof}
All verifications take place over $V_{12}^{\ast}$, whose fundamental group
contains the loops $\gamma_1,\gamma_2$ and surjects onto
$\pi_1(V_{12}-D_2)$ (killing $\gamma_1$) and onto $\pi_1(V_{12})$
(killing both); extensions of local systems across the branches exist iff
the corresponding monodromy factors through these quotients, and are then
unique.

First note that $\rho(\gamma_2)$, hence $N_2=\log\rho(\gamma_2)$,
preserves every step $W_j$: indeed $\gamma_2$ lies in the image of
$\pi_1(V_{12}^{\ast})\rar\pi_1\big((B_1\cup B_{12})^{\ast}\big)$ and $W_j$
is a sub--local system over the larger open set, i.e.\ a subspace stable
under the whole $\pi_1\big((B_1\cup B_{12})^{\ast}\big)$--action.
Symmetrically $\rho(\gamma_1)$ and $N_1$ preserve every $\Wtwo_b$.
Also, condition (E1) for $W$ says exactly that $\gamma_1$ acts trivially on
each graded piece:
\begin{equation}\label{gamma1drops}
\big(\rho(\gamma_1)-1\big)\,W_j \subset W_{j-1} ,
\end{equation}
and (E1) for $\Wtwo$ gives symmetrically
$(\rho(\gamma_2)-1)\Wtwo_b\subset\Wtwo_{b-1}$.

\emph{(E2).} On $W_j\cap\Wtwo_b$ we compute, using \eqref{gamma1drops} and
the $\rho(\gamma_1)$--stability of $\Wtwo_b$:
$$
\big(\rho(\gamma_1)-1\big)\big(W_j\cap\Wtwo_b\big)
\;\subset\; W_{j-1}\cap\Wtwo_b ,
$$
a single summand of
$W_{j-1}\cap\Wtwo_b+W_j\cap\Wtwo_{b-1}$. Hence $\gamma_1$ acts trivially
on the double graded
$\Grr\Grr(L)=\bigoplus (W_j\cap\Wtwo_b)/(W_{j-1}\cap\Wtwo_b
+W_j\cap\Wtwo_{b-1})$; symmetrically for $\gamma_2$. Since
$\pi_1(V_{12})$ is the quotient of $\pi_1(V_{12}^{\ast})$ by the normal
subgroup generated by $\gamma_1,\gamma_2$ (a product of disc bundles over
$Z$), the double graded extends to a local system on $V_{12}$.

\emph{(E3).} The extension of $\Grr^{W}(L)$ across $D_1$ over $V_{12}-D_2$
exists by (E1) and carries the $\pi_1(V_{12}-D_2)$--action through which
the $\pi_1(V_{12}^{\ast})$--action factors. The filtration induced by
$\Wtwo$ on $\Grr^{W}(L)$, with steps the images of $W_j\cap\Wtwo_b$,
is stable under all of $\pi_1(V_{12}^{\ast})$ (each $W_j\cap\Wtwo_b$ is an
intersection of stable subspaces), hence under the quotient
$\pi_1(V_{12}-D_2)$: it is a filtration of the extended local system by
sub--local systems. Symmetrically on $\Grr^{\Wtwo}$.
\end{proof}

\subsection{Families of pairs: semicontinuity, limits, transport}
\label{app:rigidity}

\begin{lemma}[Subdivision and limit pairs]\label{rigiditylimits}
Let $\rho(s)$, $s\in[0,1]$, be a real--algebraic family of representations
of $\pi_1(U)$ with unipotent $\rho(s)(\gamma_1),\rho(s)(\gamma_2)$, and let
$\Wone(s),\Wtwo(s)$ denote the corresponding kernel filtrations of
$N_1(s),N_2(s)$. Then:
\begin{itemize}
\item[(a)] There is a finite subdivision
$0=s_0<s_1<\cdots<s_m=1$ such that on each open subinterval
$(s_{i-1},s_i)$ \emph{all} the integers
$$
{\rm rk}\,\Wone_a(s),\quad {\rm rk}\,\Wtwo_b(s),\quad
d_{a,b}(s)={\rm rk}\big(\Wone_a(s)\cap\Wtwo_b(s)\big)
$$
are constant, and the subspaces vary real--analytically (indeed
real--algebraically) in the Grassmannians of the fiber $V$.
\item[(b)] At each endpoint $s_i$, one--sided limits
$\ov W{}^{(1)}_a,\ov W{}^{(2)}_b$ of the filtrations exist in the
Grassmannians. They are filtrations of $L(s_i)$ by sub--local systems
satisfying condition (E1); hence by Lemma \ref{autoE23} the limit pair
$(\ov W{}^{(1)},\ov W{}^{(2)})$ is bi--graded--extendable for
$\rho(s_i)$.
\item[(c)] Fix a metric $h$ on the underlying $\mathcal{C}^{\infty}$ bundle,
constant in $s$. On each closed subinterval $[s_{i-1},s_i]$, after a
one--sided reparametrization $s=s_{i-1}+u^{N}$ (resp.\ $s=s_i-u^{N}$)
near the endpoints, the family of metric bigradings $E_{a,b}(s)$ of Lemma
\ref{metricsplit} (formed from the pair $\Wone(s),\Wtwo(s)$ on the open
part and from the limit pair at the endpoints) is real--analytic up to and
including the endpoints, with constant ranks.
\end{itemize}
\end{lemma}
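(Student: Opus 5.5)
The plan is to reduce everything to finite-dimensional real algebraic geometry on the single fiber $V$, and to exploit that all the data depend algebraically on $s$.

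\emph{Part (a).} Since $\rho(s)(\gamma_i)$ is unipotent, $N_i(s)=\log\rho(s)(\gamma_i)$ is a finite polynomial in $\rho(s)(\gamma_i)-1$. Hence $N_i(s)$, and every power $N_i(s)^a$, is a real-algebraic (matrix-valued) function of $s$. The dimensions ${\rm rk}\,\Wone_a(s)=r-{\rm rk}\,N_1(s)^a$ are governed by vanishing of minors. Likewise
$$
d_{a,b}(s)=r-{\rm rk}\begin{pmatrix}N_1(s)^a\\ N_2(s)^b\end{pmatrix},
$$
again governed by minors. The loci where these finitely many ranks jump are zero sets of nonzero real-analytic (indeed polynomial) functions on $[0,1]$, or else the function vanishes identically. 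So they are finite, and we take the subdivision to be their union. On each open subinterval the kernels are solutions of constant-rank linear systems, and a rank-constant kernel varies real-algebraically in the Grassmannian, via Cramer-type formulas for a basis built from nonvanishing minors.

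\emph{Part (b).} A real-algebraic curve in a compact Grassmannian has one-sided limits at the endpoints, e.g.\ by the curve selection lemma or Puiseux expansion. Write $\ov W_a$ for the limit of $\Wone_a(s)$. The subspace conditions are closed: for every $g\in\pi_1$ we have $\rho(s)(g)\Wone_a(s)\subset\Wone_a(s)$, and this passes to the limit in the incidence variety. The same holds for the inclusions $\Wone_{a-1}\subset\Wone_a$ and for the condition $(\rho(s)(\gamma_1)-1)\Wone_a(s)\subset \Wone_{a-1}(s)$, which is precisely (E1). It passes to the limit because all the maps converge. So $(\ov W{}^{(1)},\ov W{}^{(2)})$ is a pair of filtrations of $L(s_i)$ by sub-local systems, each satisfying (E1), and Lemma \ref{autoE23} gives bi-graded-extendability.

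\emph{Part (c).} This is the main obstacle: the intersections $\Wone_a(s)\cap\Wtwo_b(s)$ need not converge to $\ov W{}^{(1)}_a\cap\ov W{}^{(2)}_b$. The limit of the intersections lies inside the intersection of the limits but may be smaller, and the metric bigrading is built from the intersections. The plan is to use the Puiseux parametrization. After substituting $s=s_{i-1}+u^N$, every Plücker coordinate of every subspace in sight becomes analytic in $u$ up to $u=0$. This covers $\Wone_a$, $\Wtwo_b$, the intersections and the sums $S_{a,b}$, all of constant rank for $u>0$ by (a). Their limits at $u=0$ are analytic extensions in the Grassmannian. Projectors onto analytic families of subspaces are analytic, so $h$-orthocomplements and the $E_{a,b}(u)$ extend analytically to $u=0$ with the same constant ranks.

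One must then check that the endpoint bigrading is the one the statement intends. To handle this, I would define the endpoint pair so that its intersections are the limit intersections. Concretely, I would verify that $\lim(\Wone_a\cap\Wtwo_b)=\ov W{}^{(1)}_a\cap\ov W{}^{(2)}_b$ by a dimension argument: the limit bigrading $\bigoplus E_{a,b}(0)$ still splits both limit filtrations, by continuity of the direct-sum decomposition. Splitting both filtrations forces $\ov W{}^{(1)}_a\cap\ov W{}^{(2)}_b=\bigoplus_{a'\le a,\,b'\le b}E_{a',b'}(0)$, which has dimension $d_{a,b}$. Hence the limits agree, ranks are constant up to the endpoint, and Lemma \ref{metricsplit} applied to the limit pair returns exactly $E_{a,b}(0)$.
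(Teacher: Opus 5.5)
Your parts (a) and (b) are correct and match the paper's argument: the stacked matrix $\binom{N_1^a}{N_2^b}$ for $d_{a,b}$, finiteness of the rank-jump loci, Puiseux limits in the Grassmannian, closedness of the inclusion, stability and (E1) conditions, then Lemma \ref{autoE23}.

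Part (c) has a genuine gap, at the sentence ``the limit bigrading $\bigoplus E_{a,b}(0)$ still splits both limit filtrations, by continuity of the direct-sum decomposition''. Being a direct sum is an open condition, not a closed one. The analytic extensions $E_{a,b}(0)$ do have the right ranks, as you say. But the metric bigrading is not $h$-orthogonal across incomparable indices (for instance $E_{1,2}$ versus $E_{2,1}$), so two summands can converge to the same subspace. Everything after that sentence, including the componentwise computation of $\ov W{}^{(1)}_a\cap\ov W{}^{(2)}_b$, is a consequence of the direct-sum property. So the key claim is left without an argument. The paper's proof breaks at the same point. It bounds the $h$-orthogonal projectors onto $E_{a,b}(u)$, which have norm $1$. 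What would yield a decomposition in the limit are the complementary projectors with $\sum P_{a,b}=1$, and these need not be bounded.

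In fact the claim is false, so (c) cannot be proved as stated. Take $U=(\comx^{\ast})^2\subset X=\comx^2$, so that $\pi_1(U)=\Z^2=\langle\gamma_1,\gamma_2\rangle$, and let $V=\comx^3$ carry the standard metric. Set $\rho(s)(\gamma_i)=1+N_i(s)$, where
$N_1e_3=e_1$, $N_1e_1=N_1e_2=0$, and
$N_2(s)e_3=e_1$, $N_2(s)e_2=se_1$, $N_2(s)e_1=0$.
All products $N_iN_j$ vanish, so this is a polynomial family of commuting unipotent pairs. The filtrations are $\Wone_1=\langle e_1,e_2\rangle$, $\Wtwo_1(s)=\{x_3+sx_2=0\}$ and $\Wone_2=\Wtwo_2=V$. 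Hence $d_{1,1}(s)=1$ for $s\in(0,1]$, while $\ov W{}^{(1)}_1\cap\ov W{}^{(2)}_1=\{x_3=0\}$ has dimension $2$. For $s>0$ the metric bigrading is $E_{1,1}=\langle e_1\rangle$, $E_{1,2}=\langle e_2\rangle$, $E_{2,1}(s)=\langle e_2-se_3\rangle$, $E_{2,2}=0$. As $s\to 0$ the summands $E_{1,2}$ and $E_{2,1}(s)$ collide; the complementary projector onto $E_{1,2}$ sends $e_3\mapsto s^{-1}e_2$. Meanwhile the metric bigrading of the limit pair has ranks $(2,0,0,1)$ instead of $(1,1,1,0)$. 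So the rank jump you set out to exclude actually occurs. Statement (c) has to be weakened, and its use in Proposition \ref{birigidity} rethought. Your fallback of defining the endpoint pair ``so that its intersections are the limit intersections'' is not available either: here $\lim V_{1,1}=\langle e_1\rangle$, whereas $\lim V_{1,2}\cap\lim V_{2,1}=\langle e_1,e_2\rangle$, so no pair of filtrations has these limit intersections.
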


\begin{proof}
(a) Each subspace in question is the kernel of a polynomial family of
matrices: $\Wone_a(s)\cap\Wtwo_b(s)=\ker\binom{N_1(s)^a}{N_2(s)^b}$
(stacked matrix). For a real--algebraic one--parameter family, the rank of
a matrix is constant off a finite set (the locus where all minors of the
generic size vanish); take the union of these finite sets over the finitely
many $(a,b)$. On the complement, the kernel of a constant--rank algebraic
family of matrices is an algebraic (hence real--analytic) map to the
Grassmannian.

(b) A real--algebraic arc in a projective variety (the Grassmannian)
extends across a puncture: parametrize the germ at $s_i$ by a one--sided
Puiseux series (equivalently, apply the valuative criterion of properness
to the graph closure), so the limits $\ov W{}^{(1)}_a$ exist; inclusions
$\Wone_{a-1}(s)\subset\Wone_a(s)$ and monodromy stability
$\rho(s)(g)\Wone_a(s)=\Wone_a(s)$ are closed conditions on the pair
(subspace, parameter), so they pass to the limit and the limits form
filtrations by sub--local systems of $L(s_i)$ (recall the underlying
$\mathcal{C}^{\infty}$ local system structure is trivialized along the
family, so the limit subspaces are acted on by $\pi_1$ through
$\rho(s_i)$). Condition (E1), in the form \eqref{gamma1drops}
$(\rho(s)(\gamma_1)-1)\Wone_a(s)\subset\Wone_{a-1}(s)$, holds on the open
interval (kernel filtrations satisfy it, \cite[Lemma 2.6]{IS}) and is again
a closed condition on constant--rank families, hence holds in the limit.
Lemma \ref{autoE23} applies.

(c) After the reparametrization, all the algebraic subspace families of
(a) become real--analytic on the \emph{closed} interval in the variable
$u$ (a Puiseux parametrization of a real--algebraic arc is analytic at the
endpoint). The subtle point is that the ranks $d_{a,b}$ could a priori jump
at the endpoint, i.e.\ the limit of $\Wone_a(s)\cap\Wtwo_b(s)$ could be
smaller than $\ov W{}^{(1)}_a\cap\ov W{}^{(2)}_b$. To see that no jump
occurs, consider the family of $h$--orthogonal projectors
$P_{a,b}(u)$ onto the subspaces $E_{a,b}(u)$ formed on the open part; each
$P_{a,b}(u)$ is a real--analytic family of matrices (algebraic operations
on constant--rank analytic families), and it extends analytically to
$u=0$ because its entries are bounded (projectors of norm bounded in terms
of the angles between complementary analytic subspace families; after
shrinking, a real--analytic family of complementary subspaces on the
half--open interval with entries meromorphic at $0$ has projectors
meromorphic at $0$, and boundedness excludes poles: orthogonal projectors
have operator norm $1$). The rank of the limit projector equals its trace,
which is constant along the family; hence the limits
$\ov E_{a,b}:=\mathrm{im}\,P_{a,b}(0)$ form a decomposition of $V$ with the
same dimensions, and being a splitting of the pair of filtrations is a
closed condition, so $\ov E_{a,b}$ splits the limit pair
$(\ov W{}^{(1)},\ov W{}^{(2)})$:
$$
\ov W{}^{(1)}_a=\bigoplus_{a'\leq a}\ov E_{a',\bullet},\qquad
\ov W{}^{(2)}_b=\bigoplus_{b'\leq b}\ov E_{\bullet,b'} .
$$
But intersections of two coordinate subspaces of one and the same
bigrading are computed componentwise:
$\ov W{}^{(1)}_a\cap\ov W{}^{(2)}_b
=\bigoplus_{a'\leq a,\,b'\leq b}\ov E_{a',b'}$, whose dimension is the
generic $d_{a,b}$. So no intersection rank jumps at the endpoint, the
strictness hypotheses of Lemma \ref{metricsplit} hold on the closed
interval, and the bigrading family (which agrees with the limit
construction at $u=0$ up to the $h$--orthogonalization, itself analytic) is
real--analytic on the closed interval.
\end{proof}

\begin{lemma}[Transport of a family of decompositions]\label{katolemma}
Let $\{P_k(s)\}_{k=1,\ldots,m}$, $s\in[0,1]$, be a $\mathcal{C}^1$ family
of complementary projectors on a finite--dimensional vector space
($\sum_kP_k=1$, $P_kP_l=\delta_{kl}P_k$), or a family of such on a
$\mathcal{C}^{\infty}$ bundle. Put
$$
A(s) \;:=\; \tfrac12\sum_{j}\big[\,P_j'(s),\,P_j(s)\,\big] .
$$
Then the solution of $U'(s)=A(s)U(s)$, $U(0)=1$, satisfies
$P_k(s)\,U(s)=U(s)\,P_k(0)$ for all $k$ and $s$: the gauge transformation
$U(s)$ carries the constant decomposition onto the moving one.
\end{lemma}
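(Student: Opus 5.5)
The plan is to show that the operator $Q_k(s):=U(s)P_k(0)U(s)^{-1}$ satisfies the same linear ODE as $P_k(s)$, and then to appeal to uniqueness of solutions. Concretely, I will check that $P_k(s)$ solves the commutator equation
$$
X'(s)=[A(s),X(s)] ,
$$
and that $Q_k(s)$ solves it as well. Both have the initial value $P_k(0)$, so uniqueness for linear ODEs with continuous coefficients forces $Q_k=P_k$. This rearranges to $P_k(s)U(s)=U(s)P_k(0)$. For the bundle version the argument is fiberwise with smooth dependence on parameters, so it suffices to treat a single vector space. $A$ is continuous because $P_j\in\mathcal{C}^1$, so the ODE for $U$ has a unique $\mathcal{C}^1$ solution, and $U(s)$ is invertible by Liouville's formula.

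The identity for $Q_k$ is immediate. We have $(U^{-1})'=-U^{-1}A$, so
$$
Q_k'=AUP_k(0)U^{-1}-UP_k(0)U^{-1}A=[A,Q_k].
$$

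The real content is the identity $P_k'=[A,P_k]$, which I will derive from the algebraic relations. Differentiating $P_j^2=P_j$ gives $P_j'P_j+P_jP_j'=P_j'$, hence $P_jP_j'P_j=0$. Differentiating $P_jP_l=0$ for $j\neq l$ gives $P_j'P_l=-P_jP_l'$, and differentiating $\sum_jP_j=1$ gives $\sum_jP_j'=0$. Next expand
$$
[A,P_k]=\tfrac12\sum_j\big(P_j'P_jP_k-P_jP_j'P_k-P_kP_j'P_j+P_kP_jP_j'\big).
$$
Using $P_jP_k=\delta_{jk}P_k$, the first and fourth terms contribute $\tfrac12(P_k'P_k+P_kP_k')=\tfrac12P_k'$.

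The remaining terms give $-\tfrac12\sum_j(P_jP_j'P_k+P_kP_j'P_j)$. I will treat the cases $j=k$ and $j\neq k$ separately. For $j=k$ the sum is $P_kP_k'P_k+P_kP_k'P_k=0$. For $j\neq k$, the relation $P_j'P_k=-P_jP_k'$ gives $P_jP_j'P_k=-P_jP_k'$, and symmetrically $P_kP_j'P_j=-P_k'P_j$. So the remaining terms equal $\tfrac12\sum_{j\neq k}(P_jP_k'+P_k'P_j)=\tfrac12\big((1-P_k)P_k'+P_k'(1-P_k)\big)$. Using $P_kP_k'+P_k'P_k=P_k'$, this is $\tfrac12(2P_k'-P_k')=\tfrac12P_k'$. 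Adding the two halves gives $[A,P_k]=P_k'$. This case bookkeeping is the only delicate step, and the factor $\tfrac12$ is exactly what makes the two contributions add up to $P_k'$.
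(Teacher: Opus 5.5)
Your proof is correct and follows essentially the paper's route: both reduce to the identity $[A,P_k]=P_k'$, obtained by differentiating $P_j^2=P_j$, $P_jP_l=0$ $(j\neq l)$ and $\sum_jP_j=1$, and then conclude that $U^{-1}P_kU$ is constant. The differences are cosmetic. The paper differentiates $U(s)^{-1}P_k(s)U(s)$ directly instead of invoking uniqueness for $X'=[A,X]$, and it verifies the identity blockwise via $Q=\sum_jP_jP_j'$ rather than by your term-by-term split into $j=k$ and $j\neq k$.
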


\begin{proof}[Proof (Kato \cite{Kato})]
It suffices to prove the commutator identity
\begin{equation}\label{katoidentity}
\Big[\sum_j[P_j',P_j],\,P_k\Big] \;=\; 2\,P_k' ,
\end{equation}
for then $(U^{-1}P_kU)'=U^{-1}\big(P_k'-[A,P_k]\big)U=0$.
Write $Q:=\sum_jP_jP_j'$ and $R:=\sum_jP_j'P_j$; differentiating
$\sum_jP_j=1$ gives $\sum_jP_j'=0$, hence $Q+R=0$. Then
$$
\Big[\sum_j[P_j',P_j],P_k\Big]
= P_k'P_k+P_kP_k' - QP_k + P_kQ
= P_k' + [P_k,Q] .
$$
Evaluate blockwise: for indices $a,b$, differentiating $P_aP_b=\delta_{ab}P_a$
yields $P_a'P_b=-P_aP_b'$ for $a\neq b$ and $P_aP_a'P_a=0$; one computes
$P_a\,Q\,P_b=P_aP_a'P_b$, hence
$P_a[P_k,Q]P_b=(\delta_{ak}-\delta_{bk})\,P_aP_a'P_b$. Comparing with the
blocks of $P_k'$ (which satisfy $P_aP_k'P_b=0$ unless exactly one of $a,b$
equals $k$, and $P_kP_k'P_b$, $P_aP_k'P_k$ in the two mixed cases, with
$P_aP_a'P_k=P_aP_k'P_k$ from $P_a'P_k=-P_aP_k'$) gives
$P_a(P_k'+[P_k,Q])P_b = 2\,P_aP_k'P_b$ in every block, which is
\eqref{katoidentity}.
\end{proof}

In the proof of Proposition \ref{birigidity}, Lemmas \ref{rigiditylimits}
and \ref{katolemma} are combined as follows: on each closed subinterval the
projector family of the metric bigradings is $\mathcal{C}^1$ (after the
harmless one--sided reparametrization, which does not affect the endpoint
classes), and the transport $U(s)$ identifies the moving pair of
filtrations, hence the whole moving localized patching collection, with the
constant one at the left endpoint; the variational vanishing argument then
proceeds with constant filtrations exactly as in \cite[Lemma 6.5]{IS}.

\subsection{The self--dual splitting: proof of Proposition
\ref{CKSsplitting}}
\label{app:selfdual}

Throughout, $\langle\cdot,\cdot\rangle$ is a nondegenerate hermitian form
on the finite--dimensional complex vector space $V$, preserved by the image
of $\rho|_{\pi_1(V_{12}^{\ast})}$, and $N_1,N_2$ are the commuting monodromy
logarithms. Write $A^{\dagger}$ for the $\langle\cdot,\cdot\rangle$--adjoint.

\begin{lemma}\label{Nisotropic}
$N_i^{\dagger}=-N_i$, i.e.\
$\langle N_iu,v\rangle+\langle u,N_iv\rangle=0$ for all $u,v$.
\end{lemma}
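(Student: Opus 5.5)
The plan is to derive the isotropy from the invariance of the form under the local monodromy. By hypothesis $\rho(\gamma_i)$ preserves $\langle\cdot,\cdot\rangle$, since $\gamma_i$ lies in $\pi_1(V_{12}^{\ast})$. So for $T_i:=\rho(\gamma_i)$ we have $T_i^{\dagger}T_i=1$, i.e.\ $T_i^{\dagger}=T_i^{-1}$. Because $\langle\cdot,\cdot\rangle$ is nondegenerate, the adjoint $A\mapsto A^{\dagger}$ is a well-defined conjugate-linear anti-automorphism of ${\rm End}(V)$. It satisfies $(AB)^{\dagger}=B^{\dagger}A^{\dagger}$ and is continuous, hence it commutes with polynomial expressions up to complex conjugation of the coefficients.

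Next I would use unipotency. Write $T_i=1+M_i$ with $M_i$ nilpotent. Then
$$N_i=\log T_i=\sum_{k=1}^{r-1}\frac{(-1)^{k+1}}{k}M_i^k,$$
a finite sum with real coefficients. Applying $\dagger$ gives
$$N_i^{\dagger}=\sum_k\frac{(-1)^{k+1}}{k}(M_i^{\dagger})^k=\log(T_i^{\dagger}).$$
The step from the first expression to the second is legitimate because the coefficients are real, so conjugation does not change them. Also $T_i^{\dagger}=T_i^{-1}=1+M_i'$ is again unipotent, with $M_i'=(M_i^{\dagger})$ nilpotent. Hence the logarithm series is still the finite polynomial expression, and the identity is honest algebra rather than analysis.

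Finally, $\log(T_i^{-1})=-\log T_i$ holds for unipotent $T_i$. To see this, note that $\exp$ and $\log$ are mutually inverse polynomial bijections between nilpotent and unipotent matrices. Moreover $\exp(-N_i)=\exp(N_i)^{-1}=T_i^{-1}$, and this is unipotent, so $\log(T_i^{-1})=-N_i$. Combining, $N_i^{\dagger}=-N_i$, which is exactly $\langle N_iu,v\rangle+\langle u,N_iv\rangle=0$.

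No step is a real obstacle. The only point needing care is the interchange of $\dagger$ with $\log$, which requires the coefficients to be real and the series to be finite. Both are guaranteed by unipotency of $\rho(\gamma_i)$. I would state this explicitly rather than appeal to any convergence argument.
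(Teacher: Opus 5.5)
Your proof is correct, but it takes a different route from the paper's.

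The paper argues through the one-parameter group. For real $t$, the function $t\mapsto\langle e^{tN_i}u,e^{tN_i}v\rangle$ is a polynomial in $t$; since $t$ is real, the entries conjugated through the second slot stay polynomial. It equals $\langle u,v\rangle$ at every integer, because $e^{nN_i}=\rho(\gamma_i)^n$ is an isometry. Hence it is constant, and differentiating at $t=0$ gives the claim.

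You stay inside $\mathrm{End}(V)$ instead. From $T_i^{\dagger}=T_i^{-1}$, and the fact that $\dagger$ is a conjugate-linear anti-automorphism commuting with polynomials that have real coefficients, you get $N_i^{\dagger}=\log(T_i^{\dagger})=\log(T_i^{-1})=-N_i$.

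What each route buys:
\begin{itemize}
\item Your version is purely algebraic, with no interpolation or differentiation. It makes explicit that the only inputs are the finiteness of the logarithm series (from unipotency) and the rationality of its coefficients.
\item The paper's version passes through the group-level statement that every $e^{tN_i}$, $t\in\R$, is an isometry. This is the same polynomial-identity (Zariski-closure) mechanism later used for the scalings $T_{t_1,t_2}$ in Proposition \ref{hermtriple2}. That statement also follows from yours by exponentiating, since $(e^{tN_i})^{\dagger}=e^{-tN_i}$ for real $t$.
\end{itemize}

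One minor point: your appeal to continuity of $\dagger$ is unnecessary, since for polynomial expressions its algebraic properties suffice.
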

\begin{proof}
For real $t$ the function
$t\mapsto\langle e^{tN_i}u,\,e^{tN_i}v\rangle$ is a polynomial in $t$:
each entry of $e^{tN_i}$ is a polynomial in $t$, and since $t$ is real the
conjugated entries appearing through the second (conjugate--linear) slot
are again polynomials in $t$. It takes the value
$\langle u,v\rangle$ at every integer $t$, because
$e^{nN_i}=\rho(\gamma_i)^n$ preserves the form; a polynomial with
infinitely many coincidences is constant. Differentiating at $t=0$ gives
the claim.
\end{proof}

\begin{lemma}[Self--duality of the weight filtration]\label{selfdualW}
$W(N_i)_a^{\perp}=W(N_i)_{-a-1}$ for all $a$.
\end{lemma}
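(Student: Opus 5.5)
The plan is to reduce the statement to the Jacobson--Morozov (Deligne) description of $W(N)$ together with the skew-adjointness of Lemma \ref{Nisotropic}. Write $N=N_i$ and $W=W(N)$, the unique increasing filtration with $NW_a\subset W_{a-2}$ for which $N^k$ induces isomorphisms $\Grr^W_k\cong\Grr^W_{-k}$. I first show the inclusion $W_{-a-1}\subset W_a^{\perp}$, and then compare dimensions.

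\emph{Inclusion.} I use the explicit formula
\[
W_a=\sum_{j\geq 0,\ k-2j\leq a}N^j\ker(N^{k+1}),
\]
which is the standard description of the weight filtration. Equivalently, I can take an $\mathfrak{sl}_2$-adapted (Jordan) basis: $V$ is a direct sum of strings $u,Nu,\dots,N^ku$ with $N^{k+1}u=0$, and $N^ju$ has weight $k-2j$. So it suffices to check
\[
\langle N^ju,\,N^{j'}u'\rangle=0 \quad\text{whenever } (k-2j)+(k'-2j')\leq -1,
\]
for string generators $u$ (of length $k+1$) and $u'$ (of length $k'+1$). By Lemma \ref{Nisotropic}, $\langle N^ju,N^{j'}u'\rangle=\pm\langle u,N^{j+j'}u'\rangle$. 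This vanishes as soon as $j+j'>k'$. If instead $j+j'\leq k'$, then moving $N$ the other way gives $\pm\langle N^{j+j'}u,u'\rangle$, which vanishes if $j+j'>k$.

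The weight condition $k+k'-2(j+j')\leq -1$ gives $2(j+j')\geq k+k'+1$. Hence $j+j'>\min(k,k')$, and in fact $j+j'>\max(k,k')$ fails only when $|k-k'|$ is large. This is the delicate point, because the Jordan basis need not be adapted to the form. Rather than rely on a form-adapted basis, I will argue directly with the intrinsic description $W_a=\sum_j N^j\ker N^{k+1}$ over $k-2j\leq a$, and likewise for $W_{-a-1}$. Take $x=N^jy$ with $N^{k+1}y=0$, $k-2j\leq a$, and $x'=N^{j'}y'$ with $N^{k'+1}y'=0$, $k'-2j'\leq -a-1$. Adding the two inequalities gives $j+j'\geq \tfrac{1}{2}(k+k'+1)$, so $j+j'$ exceeds at least one of $k,k'$. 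Then $\langle x,x'\rangle=\pm\langle y,N^{j+j'}y'\rangle=\pm\langle N^{j+j'}y,y'\rangle$ vanishes, using whichever of $k,k'$ is exceeded. This proves the inclusion.

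\emph{Dimension.} By nondegeneracy of the form, $\dim W_a^{\perp}=\dim V-\dim W_a$. The symmetry $\dim\Grr^W_k=\dim\Grr^W_{-k}$ (from $N^k$) gives $\dim W_{-a-1}=\dim V-\dim W_a$, via the telescoping
\[
\dim V-\dim W_a=\sum_{k>a}\dim\Grr_k=\sum_{k\leq -a-1}\dim\Grr_k .
\]
So the inclusion is an equality.

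The main obstacle is justifying the intrinsic formula for $W_a$, as opposed to a basis-dependent one. I would cite Deligne's construction, or verify directly that the right-hand side is $N$-stable, that $N W_a\subset W_{a-2}$, and that it satisfies the hard-Lefschetz property, and then conclude by uniqueness of $W(N)$.
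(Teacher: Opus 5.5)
Your proof is correct, and it takes a genuinely different route from the paper.

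\textbf{The paper's route.} The paper never writes down generators of $W(N)$. It sets $W'_a:=(W_{-a-1})^{\perp}$ and uses Lemma \ref{Nisotropic} to get $NW'_a\subset W'_{a-2}$. It then identifies $\Grr^{W'}_a$ with the conjugate dual of $\Grr^{W}_{-a}$, so that $N^k:\Grr^{W'}_k\rar\Grr^{W'}_{-k}$ is, up to sign, the conjugate dual of the Lefschetz isomorphism for $W$. It concludes $W'=W$ by uniqueness of the weight filtration, so uniqueness absorbs all the dimension bookkeeping.

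\textbf{Your route.} You prove the isotropy $\langle W_a,W_{-a-1}\rangle=0$ by hand: you push all powers of $N$ to one side and use $j+j'>\min(k,k')$. You then finish with the count $\dim W_{-a-1}=\dim V-\dim W_a$, which follows from $\dim\Grr^W_k=\dim\Grr^W_{-k}$. This makes the mechanism of the vanishing explicit, but it requires a generating set for $W_a$, which the paper's argument avoids.

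\textbf{Two simplifications of your write-up.} First, the ``delicate point'' you raise about the Jordan basis is spurious. That computation only needs $j+j'>k$ or $j+j'>k'$, and it never uses adaptedness of the basis to the form. Your ``intrinsic'' version is literally the same computation, with string generators replaced by elements of $\ker N^{k+1}$. Second, the ``main obstacle'' disappears. For the inclusion you only need that $W_a$ is \emph{spanned} by elements $N^jy$ with $N^{k+1}y=0$ and $k-2j\leq a$. This is immediate from the Jordan-string construction of $W(N)$: the basis vector $N^ju$ of weight $k-2j$ on a string of length $k+1$ is such an element. The reverse inclusion in your formula, and any appeal to uniqueness, are unnecessary.
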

\begin{proof}
Write $N=N_i$, $W=W(N)$, and define $W'_a:=(W_{-a-1})^{\perp}$, an
increasing filtration. For $v\in W'_a$ and $u\in W_{-a+1}$ we have, by
Lemma \ref{Nisotropic},
$\langle Nv,u\rangle=-\langle v,Nu\rangle=0$ since
$Nu\in W_{-a-1}$; thus $NW'_a\subset(W_{-a+1})^{\perp}=W'_{a-2}$. On the
graded pieces, $\Grr^{W'}_a=(W_{-a-1})^{\perp}/(W_{-a})^{\perp}$ is
canonically the conjugate dual of $\Grr^W_{-a}$, and the endomorphism
induced by $N$ corresponds under this identification to $-$ (conjugate
dual of $N$); hence
$N^k:\Grr^{W'}_k\rar\Grr^{W'}_{-k}$ is, up to sign, the conjugate dual of
$N^k:\Grr^W_{k}\rar\Grr^W_{-k}$, an isomorphism. By uniqueness of the
monodromy weight filtration (both properties characterize it), $W'=W$.
\end{proof}

\begin{lemma}[Perfect pairing on the double graded]\label{grgrpairing}
Write $\Grr\Grr_{a,b}
:=(W^1_a\cap W^2_b)\big/\big(W^1_{a-1}\cap W^2_b+W^1_a\cap W^2_{b-1}\big)$
with $W^i:=W(N_i)$. The form induces a well--defined nondegenerate pairing
$$
\Grr\Grr_{a,b}\ \times\ \Grr\Grr_{-a,-b}\ \lrar\ \comx .
$$
In particular $\dim\Grr\Grr_{a,b}=\dim\Grr\Grr_{-a,-b}$.
\end{lemma}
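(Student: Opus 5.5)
The idea is to reduce everything to the self--duality of each weight filtration (Lemma \ref{selfdualW}) and to the existence of bigradings (Lemma \ref{metricsplit}, applied in a single fiber $V$). I will choose a bigrading $V=\bigoplus V_{c,d}$ splitting both $W^1$ and $W^2$. Then $W^1_a\cap W^2_b=\bigoplus_{c\le a,\,d\le b}V_{c,d}$, and $\Grr\Grr_{a,b}\cong V_{a,b}$.

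\emph{Step 1: orthogonality.} By Lemma \ref{selfdualW}, $(W^1_a)^{\perp}=W^1_{-a-1}$ and $(W^2_b)^{\perp}=W^2_{-b-1}$. Hence
$$\big(W^1_a\cap W^2_b\big)^{\perp}=W^1_{-a-1}+W^2_{-b-1}.$$
It follows that $W^1_a\cap W^2_b$ pairs to zero against $W^1_{-a-1}\cap W^2_{-b}$ and against $W^1_{-a}\cap W^2_{-b-1}$. This shows the pairing descends to $\Grr\Grr_{a,b}\times\Grr\Grr_{-a,-b}$ and is well defined. The claim that the left radical is zero is symmetric.

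\emph{Step 2: nondegeneracy.} Take $x\in W^1_a\cap W^2_b$ with $\langle x,W^1_{-a}\cap W^2_{-b}\rangle=0$. I will show that $x\in S_{a,b}:=W^1_{a-1}\cap W^2_b+W^1_a\cap W^2_{b-1}$. The hypothesis says
$$x\in(W^1_{-a}\cap W^2_{-b})^{\perp}=W^1_{a-1}+W^2_{b-1},$$
so $x\in (W^1_a\cap W^2_b)\cap(W^1_{a-1}+W^2_{b-1})$. The key point is the modular identity for the distributive lattice generated by two filtrations:
$$(W^1_a\cap W^2_b)\cap(W^1_{a-1}+W^2_{b-1})=W^1_{a-1}\cap W^2_b+W^1_a\cap W^2_{b-1}.$$
I would prove it using the bigrading, since all spaces involved are coordinate subspaces and so intersections and sums are computed index by index. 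The left side is $\bigoplus V_{c,d}$ over $c\le a$, $d\le b$ with ($c\le a-1$ or $d\le b-1$), which is exactly the right side. Concretely, write $x=y+z$ with $y\in W^1_{a-1}$ and $z\in W^2_{b-1}$, then project to the summands with $c\le a$, $d\le b$ to move $y$ and $z$ inside $W^1_a\cap W^2_b$.

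\emph{Step 3: dimensions.} Nondegeneracy on both sides gives injections each way between $\Grr\Grr_{a,b}$ and the conjugate dual of $\Grr\Grr_{-a,-b}$. Hence the two spaces have equal dimension and the pairing is perfect.

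The main obstacle is Step 2, namely getting the correct orthogonal complement of an intersection and justifying the modular identity. The risk is that the lattice generated by two filtrations is distributive but the plus and cap manipulations must be done carefully. Using a simultaneous splitting from Lemma \ref{metricsplit} turns this into bookkeeping on index sets, and I would use that route rather than abstract lattice theory.
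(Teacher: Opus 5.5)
Your proposal is correct and follows essentially the same route as the paper. Self--duality $(W^i_a)^{\perp}=W^i_{-a-1}$ gives well--definedness and reduces nondegeneracy to $x\in(W^1_a\cap W^2_b)\cap(W^1_{a-1}+W^2_{b-1})$, and the modular identity is then checked on the index sets of a bigrading from Lemma \ref{metricsplit}. That lemma uses no form, so there is no circularity. Your projection argument and your two--sided radical count in Step 3 spell out what the paper leaves implicit: the paper checks only one radical, and the reverse inequality comes from the symmetry $(a,b)\mapsto(-a,-b)$.
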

\begin{proof}
\emph{Well--defined.} By Lemma \ref{selfdualW} one has
$\langle W^1_{a-1},\,W^1_{-a}\rangle=0$
(since $W^1_{-a}=(W^1_{a-1})^{\perp}$), and likewise
$\langle W^2_{b-1},\,W^2_{-b}\rangle=0$; hence both summands of the
denominator at $(a,b)$ pair to zero with $W^1_{-a}\cap W^2_{-b}$, and
symmetrically.

\emph{Nondegenerate.} Let $v\in W^1_a\cap W^2_b$ pair to zero with all of
$W^1_{-a}\cap W^2_{-b}$. Then
$v\in(W^1_{-a}\cap W^2_{-b})^{\perp}
=(W^1_{-a})^{\perp}+(W^2_{-b})^{\perp}
=W^1_{a-1}+W^2_{b-1}$ (using $(A\cap B)^{\perp}=A^{\perp}+B^{\perp}$ for a
nondegenerate form and Lemma \ref{selfdualW}). It remains to prove the
modular identity
\begin{equation}\label{modular}
\big(W^1_a\cap W^2_b\big)\cap\big(W^1_{a-1}+W^2_{b-1}\big)
= W^1_{a-1}\cap W^2_b\ +\ W^1_a\cap W^2_{b-1} .
\end{equation}
The inclusion $\supseteq$ is clear. For $\subseteq$, choose \emph{any}
bigrading $V=\bigoplus V_{c,d}$ splitting the pair $(W^1,W^2)$ (one exists
by Lemma \ref{metricsplit} applied to the single fiber; note this uses no
form, so there is no circularity). All four subspaces in \eqref{modular}
are sums of coordinate pieces $V_{c,d}$ over the index sets
$$
\{c\leq a,\,d\leq b\}\cap\big(\{c\leq a-1\}\cup\{d\leq b-1\}\big)
=\{c\leq a-1,\,d\leq b\}\cup\{c\leq a,\,d\leq b-1\},
$$
which is exactly the index set of the right side. Both sides being
coordinate subspaces with the same index set, they are equal.
\end{proof}

\begin{proof}[Proof of Proposition \ref{CKSsplitting}]
Let $\mathcal S$ denote the set of bigradings $s=\{V_{a,b}\}$ splitting the
pair $(W^1,W^2)$; it is non--empty by Lemma \ref{metricsplit}. Let $G$ be
the group of automorphisms of $V$ preserving both filtrations and inducing
the identity on $\Grr\Grr$. Then:

\emph{$\mathcal S$ is a $G$--torsor.} Given $s,s'$, the compositions
$V_{a,b}\cong\Grr\Grr_{a,b}\cong V'_{a,b}$ assemble into $g\in G$ with
$g\cdot s=s'$; and $g\in G$ fixing $s$ acts on each $V_{a,b}$ as the map
induced on $\Grr\Grr_{a,b}$, i.e.\ as the identity.

\emph{$G$ is unipotent, with unique square roots.} For $g\in G$ and any
$s\in\mathcal S$, $(g-1)$ maps $V_{a,b}$ into
$\bigoplus_{a'\leq a,\,b'\leq b,\,(a',b')\neq(a,b)}V_{a',b'}$ (it preserves
$W^1_a\cap W^2_b$ and induces $0$ on $\Grr\Grr$), so $g-1$ strictly lowers
the total degree $a+b$ and is nilpotent. Hence
$h:=g^{1/2}:=\exp(\tfrac12\log g)$ is defined by the finite series, lies in
$G$ (the conditions defining $G$ are stable under polynomials in $g$
congruent to $1$), and is the unique square root of $g$ in $G$.

\emph{The duality involution.} For $s=\{V_{a,b}\}\in\mathcal S$ set
$$
V^{\sigma}_{a,b}
:=\Big(\bigoplus_{(c,d)\neq(-a,-b)}V_{c,d}\Big)^{\perp} .
$$
We claim $\sigma(s):=\{V^{\sigma}_{a,b}\}\in\mathcal S$.
(1) $\dim V^{\sigma}_{a,b}=\dim V_{-a,-b}$ (orthogonal of a subspace of
complementary dimension) $=\dim\Grr\Grr_{-a,-b}=\dim\Grr\Grr_{a,b}$ by
Lemma \ref{grgrpairing}.
(2) $V^{\sigma}_{a,b}\subset W^1_a\cap W^2_b$: indeed
$W^1_{-a-1}=\bigoplus_{c\leq -a-1}V_{c,\bullet}$ is contained in the
subspace being orthogonalized (all its indices have $c\leq -a-1<-a$), so
$V^{\sigma}_{a,b}\subset(W^1_{-a-1})^{\perp}=W^1_a$ by Lemma
\ref{selfdualW}; symmetrically for $W^2_b$.
(3) The $V^{\sigma}_{a,b}$ are independent: if $\sum v_{a,b}=0$ with
$v_{a,b}\in V^{\sigma}_{a,b}$, pair with $V_{-a_0,-b_0}$: all terms with
$(a,b)\neq(a_0,b_0)$ vanish by definition of $\sigma$, so
$\langle v_{a_0,b_0},V_{-a_0,-b_0}\rangle=0$; since also
$\langle v_{a_0,b_0},V_{c,d}\rangle =0$ for $(c,d)\neq(-a_0,-b_0)$, we get
$v_{a_0,b_0}\perp V$, hence $v_{a_0,b_0}=0$.
By (1)+(3), $V=\bigoplus V^{\sigma}_{a,b}$, and by (2) plus the dimension
count as in Lemma \ref{metricsplit}, $\sigma(s)$ splits both filtrations.
Moreover $\sigma(s)$ is characterized by:
$\langle V^{\sigma}_{a,b},V_{c,d}\rangle=0$ unless $(c,d)=(-a,-b)$; since
this orthogonality relation is (conjugate--)symmetric, applying it twice
gives $\sigma^2=\mathrm{id}$.

\emph{Equivariance.} For $g\in G$ put $\theta(g):=(g^{\dagger})^{-1}$.
By Lemma \ref{selfdualW}, $g^{\dagger}$ preserves
$(W^i_a)^{\perp}=W^i_{-a-1}$ for all $a$, hence preserves both filtrations;
and $\langle gu,v\rangle=\langle u,g^{\dagger}v\rangle$ together with the
nondegeneracy of the $\Grr\Grr$--pairing (Lemma \ref{grgrpairing}) shows
$g^{\dagger}$ induces the identity on $\Grr\Grr$; so
$\theta(g)\in G$, and $\theta$ is an automorphism of $G$ with
$\theta^2=\mathrm{id}$ commuting with $\log$ and $\exp$. Directly from the
definitions,
$$
\sigma(g\cdot s)=\theta(g)\cdot\sigma(s) .
$$

\emph{Fixed point.} Pick $s_0\in\mathcal S$ and write
$\sigma(s_0)=g\cdot s_0$, $g\in G$. Applying $\sigma$:
$s_0=\sigma^2(s_0)=\theta(g)\,g\cdot s_0$, so $\theta(g)g=1$, i.e.\
$\theta(g)=g^{-1}$. Let $h:=g^{1/2}\in G$; then
$\theta(h)=\exp(\tfrac12\log\theta(g))=\exp(-\tfrac12\log g)=h^{-1}$, and
$$
\sigma(h\cdot s_0)=\theta(h)\cdot\sigma(s_0)=h^{-1}g\cdot s_0
= h^{-1}h^2\cdot s_0 = h\cdot s_0 .
$$
Thus $s:=h\cdot s_0$ is $\sigma$--fixed, i.e.\
$V_{a,b}=V^{\sigma}_{a,b}$, which says precisely
$\langle V_{a,b},V_{c,d}\rangle=0$ unless $(c,d)=(-a,-b)$: property (i).

\emph{Property (ii).} The weight filtration $W(N_2)$ is canonically
attached to $N_2$ (Jacobson--Morozov uniqueness), hence preserved by every
operator commuting with $N_2$, in particular by $N_1$. Since also
$N_1W(N_1)_a\subset W(N_1)_{a-2}$, for any splitting of the pair one gets
$N_1(V_{a,b})\subset W(N_1)_{a-2}\cap W(N_2)_b
=\bigoplus_{a'\leq a-2,\ b'\leq b}V_{a',b'}$, and symmetrically.
\end{proof}

\subsection{Cubical Mayer--Vietoris bookkeeping}
\label{app:cubical}

\begin{lemma}[Details for Lemma \ref{cubicaldefisom}]\label{cubicalMVdetails}
Let $F$ be a field. Model $BGL(F)^+_{{\rm def},2}$ as the homotopy colimit,
over the poset of faces of the square $\square$, of the diagram
$$
D(\sigma):= BGL\big(F[t_{\sigma}]\big)^+ ,
\qquad
F[t_{\sigma}]=F,\ F[t_i],\ F[t_1,t_2]
$$
for $\sigma$ a vertex, edge, $2$--face, with structure maps the evaluations
$t_j\mapsto 0,1$. Concretely, realize it as
$$
X_2:= X_1\ \cup_{Y\times\partial\square}\ \big(Y\times\square\big),
\qquad Y:=BGL(F[t_1,t_2])^+ ,
$$
where $X_1$ (the boundary ring) is the union, over the four edges, of the
cylinders $BGL(F[t_i])^+\times[0,1]$, glued at the four vertex copies of
$BGL(F)^+$ along the evaluation maps; all gluings are along cofibrations,
so the strict colimit computes the homotopy colimit. Then each vertex
inclusion $BGL(F)^+\hookrightarrow X_2$ is a homology isomorphism (with
arbitrary constant coefficients), and an isomorphism on $\pi_1=K_1(F)$.
\end{lemma}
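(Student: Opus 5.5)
The proof will be a two-stage Mayer--Vietoris argument on the explicit model $X_2 = X_1 \cup_{Y\times\partial\square}(Y\times\square)$. It relies throughout on Quillen's homotopy invariance. All maps $BGL(F)^+\rar BGL(F[t_i])^+\rar BGL(F[t_1,t_2])^+$ and all evaluations $e_{t_j=0,1}$ are homology isomorphisms with arbitrary constant coefficients. Since $e\circ(\text{inclusion})=\mathrm{id}$, each evaluation is the inverse on homology of the constant inclusion. Consequently, all evaluations from a given ring to $F$ induce the \emph{same} map on homology.

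\textbf{Step 1: the boundary ring $X_1$.} Cut the circle $\partial\square$ into two arcs, each a union of two edges meeting at a vertex. For a single edge cylinder $BGL(F[t_i])^+\times[0,1]$ with vertex copies glued at its ends, each end inclusion is a homology isomorphism. Hence an arc $A$, a union of two such cylinders along a vertex copy, has every vertex inclusion $BGL(F)^+\hookrightarrow A$ a homology isomorphism; this is a Mayer--Vietoris computation, or simply a mapping-cylinder homotopy equivalence argument. The two arcs $A,A'$ intersect in the two opposite vertex copies $BGL(F)^+\sqcup BGL(F)^+$.

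The Mayer--Vietoris sequence for $X_1=A\cup A'$ then has intersection term $H_*(BGL(F)^+)^{\oplus 2}$ mapping to $H_*(A)\oplus H_*(A')\cong H_*(BGL(F)^+)^{\oplus2}$. The key point is that, after identifying all four vertex homologies via the evaluation-compatible isomorphisms, the transport around the square is the identity. Going around the loop composes inclusions and evaluations, and by the remark above all of these are mutually inverse identifications with $H_*(BGL(F))$. The map is therefore $(x,y)\mapsto(x+y,x+y)$ up to signs. As a result $H_*(X_1)\cong H_*(BGL(F)^+)\oplus H_{*-1}(BGL(F)^+)$: $X_1$ has the homology of $BGL(F)^+\times S^1$, with the extra circle class.

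\textbf{Step 2: attaching the face.} Apply Mayer--Vietoris to $X_2=X_1\cup (Y\times\square)$ with intersection $Y\times\partial\square\simeq Y\times S^1$. Since $Y\rar BGL(F)^+$ (any evaluation) is a homology isomorphism, $H_*(Y\times S^1)\cong H_*(BGL(F))\otimes H_*(S^1)$ by Künneth, and $Y\times\square\simeq Y$. I would check that the restriction map $Y\times\partial\square\rar X_1$, which is built edgewise from evaluations $Y\rar BGL(F[t_i])^+$, is a homology isomorphism. On the $H_*(Y)$ factor this follows from compatibility of evaluations. On the circle factor, the loop class of $Y\times\partial\square$ must map to the circle class of $X_1$ found in Step 1, and verifying this is where the consistency of the transports around the square is used again. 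Granting this, the Mayer--Vietoris sequence shows that $H_*(Y\times\square)\rar H_*(X_2)$ is an isomorphism, and composing with a vertex inclusion, which factors up to homotopy through $Y$ via the constant map, gives the claim.

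\textbf{Fundamental group.} Van Kampen applies along the same decomposition. All pieces have $\pi_1=K_1$ of the relevant ring, and $K_1(F[t_1,t_2])=K_1(F)$ by regularity. The loop around $\partial\square$ is killed by the $2$-cell $Y\times\square$, which leaves $\pi_1(X_2)=K_1(F)$.

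\textbf{Main obstacle.} The hardest part is the bookkeeping of the circle class: showing that Step 1's extra class in $H_{*-1}$ is exactly cancelled by the face attachment. That is, the boundary map in Step 2 must be an isomorphism on the $H_{*-1}(BGL(F))$ summand, rather than, say, zero or a multiple. I would handle this by comparing $X_2$ with the constant diagram $BGL(F)^+$ (all rings replaced by $F$), whose homotopy colimit is $BGL(F)^+\times\square\simeq BGL(F)^+$. The inclusion of diagrams $F\rar F[t_\sigma]$ is objectwise a homology equivalence. By the homotopy-colimit (Bousfield--Kan) spectral sequence, or equivalently by iterated Mayer--Vietoris with the five lemma, this induces a homology equivalence of homotopy colimits. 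This gives a cleaner argument that avoids tracking signs, and I would adopt it as the main route, keeping Steps 1--2 as a sanity check.
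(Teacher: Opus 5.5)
Your proposal is correct, and the route you actually adopt differs from the paper's. The paper works skeleton by skeleton on the explicit model. First it computes $H_*(X_1)\cong H_*\oplus H_{*-1}$ by Mayer--Vietoris, splitting the ring as three edges versus one rather than into your two arcs. It then proves that the attaching map $\phi\colon Y\times\partial\square\to X_1$ is a homology isomorphism: it decomposes $Y\times\partial\square$ the same way and applies the five lemma to the map of Mayer--Vietoris sequences induced by $\phi$, since each piece maps to the corresponding piece by evaluations, all of which are homology equivalences. Finally it reads off $H_*(X_2)\cong H_*(Y)$ from the Mayer--Vietoris sequence of $X_1\cup(Y\times\square)$, whose connecting maps vanish. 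That five-lemma comparison is exactly what disposes of the step you flagged as the main obstacle in your Steps~1--2. One never has to locate the circle class or track signs, and the comparison does not even need your observation that all evaluations agree on homology. The paper's formula $(x,y)\mapsto(x+y,x+y)$ in the computation of $H_*(X_1)$ tacitly uses that observation, which you make explicit. Your main route instead compares the whole diagram with the constant diagram, via the transformation induced by $F\hookrightarrow F[t_\sigma]$; this is natural because evaluations fix constants, and the constant diagram realizes as $BGL(F)^+\times\square$. The objectwise homology equivalence then gives a homology equivalence $BGL(F)^+\times\square\to X_2$ that is the identity on each vertex copy. So each vertex inclusion is this map precomposed with the homotopy equivalence $BGL(F)^+\times\{v\}\hookrightarrow BGL(F)^+\times\square$. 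Your route is shorter and sign-free, and it extends verbatim to $n$-cubes with $BGL(F[t_1,\ldots,t_n])^+$ on the top face. The paper's route in exchange exhibits $H_*(X_1)\cong H_*(BGL(F)^+\times S^1)$ and shows concretely that attaching the face kills the extra circle summand. The $\pi_1$ statement is the same Van Kampen argument in both.
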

\begin{proof}
Write $H_{\ast}$ for homology with the fixed coefficients and
$H:=H_{\ast}(BGL(F)^+)$. All the structure maps of the diagram are homology
equivalences: the evaluations
$e_{0},e_1: BGL(F[t_i])^+\rar BGL(F)^+$ and
$BGL(F[t_1,t_2])^+\rar BGL(F[t_i])^+$ are so by Quillen's homotopy
invariance of $K$--theory for the regular rings $F$ and $F[t_i]$
\cite{Quillen} (as in \cite[\S 7.1]{IS}, homology of the plus construction
is determined by the $K$--theory space).

\emph{Step 1: two adjacent pieces.} The union of one edge cylinder with its
two vertex copies is the double mapping cylinder of
$BGL(F)^+\stackrel{e_1}{\leftarrow}BGL(F[t_i])^+
\stackrel{e_0}{\rar}BGL(F)^+$; since both maps are homology equivalences,
each vertex inclusion into it is a homology equivalence (Mayer--Vietoris
with $A,B$ the two half--cylinders, $A\cap B\simeq BGL(F[t_i])^+$).

\emph{Step 2: the boundary ring.} Decompose $X_1=A\cup B$ where $A$ is the
union of three consecutive edge cylinders (with their vertices) and $B$ the
fourth cylinder; then $A\simeq BGL(F)^+$ (Step 1, iterated twice) and
$B\simeq BGL(F)^+$, while $A\cap B$ is the disjoint union of the two end
vertex copies, $A\cap B\simeq BGL(F)^+\sqcup BGL(F)^+$. In the
Mayer--Vietoris sequence
$$
\cdots\rar H_n(A\cap B)\rar H_n(A)\oplus H_n(B)\rar H_n(X_1)
\rar H_{n-1}(A\cap B)\rar\cdots
$$
the middle map is $H\oplus H\rar H\oplus H$,
$(x,y)\mapsto(x+y,\,x+y)$ up to identifications (each component of
$A\cap B$ includes into each of $A,B$ by a homology equivalence), with
kernel and cokernel both isomorphic to $H_n$; hence
$$
H_n(X_1)\;\cong\;H_n\oplus H_{n-1}
\;=\;H_n\big(BGL(F)^+\times S^1\big),
$$
and one checks the vertex inclusion realizes the first summand.

\emph{Step 3: gluing the face.} The attaching map
$\phi: Y\times\partial\square\rar X_1$ sends $Y\times(\mbox{edge})$ to the
corresponding edge cylinder through the evaluation
$Y\rar BGL(F[t_i])^+$ (a homology equivalence) crossed with the identity of
the interval. Running the Step--2 computation for the source
$Y\times\partial\square$ (a boundary ring built from four copies of
$Y\times[0,1]$) and comparing with that for $X_1$ through the map of
Mayer--Vietoris sequences induced by $\phi$, the five lemma shows
$\phi_{\ast}$ is an isomorphism on $H_{\ast}$. Now apply Mayer--Vietoris to
$X_2=X_1\cup(Y\times\square)$ with intersection $Y\times\partial\square$:
the map
$$
H_n(Y\times\partial\square)\lrar H_n(X_1)\oplus H_n(Y\times\square)
$$
has first component the isomorphism $\phi_{\ast}$, hence is (split)
injective; so all connecting maps vanish and
$$
H_n(X_2)\cong
\frac{H_n(X_1)\oplus H_n(Y)}{H_n(Y\times\partial\square)}
\cong
\frac{(H_n\oplus H_{n-1})\oplus H_n}{H_n\oplus H_{n-1}}
\cong H_n ,
$$
where the quotient is by the graph of
$(\phi_{\ast},\ \mbox{projection})$; a diagram chase identifies the
surviving $H_n$ with the image of a vertex copy. For $\pi_1$: all spaces
are plus constructions or products of such with $S^1$--like pieces glued
along $\pi_1$--isomorphisms on connected intersections; Van Kampen gives
$\pi_1(X_2)=K_1(F)$, abelian, with the vertex inclusion inducing the
identity; hence the spaces are simple and homology isomorphisms give weak
equivalences if desired. Cohomology with arbitrary coefficients follows by
universal coefficients.
\end{proof}

\begin{lemma}[Details for Lemma \ref{bicompare1}]\label{comparedetails}
Let $(\{\eta_{\sigma}\})$ be the $2$--cubical deformation patching datum of
\S \ref{cubicalfromrho} attached to $(\rho,\Wone,\Wtwo)$ over
$F\subset\comx$, and $\nabla^{\rm def}$ the associated connection of
Lemma \ref{triangulardef}. Then
$\what{c}_p(\nabla^{\rm def})=\what{c}_p^{\rm def}(\{\eta_{\sigma}\})$ in
$H^{2p-1}(X,\comx/\Z)$, $p\geq 1$.
\end{lemma}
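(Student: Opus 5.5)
The plan follows the one--divisor case \cite[Lemma 7.4]{IS}, in three steps: reduce to a universal situation, show that a class there is detected on vertex pieces, and compare the two classes on each vertex piece.

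\emph{Step 1: reduction to the universal case.} The datum $\{\eta_\sigma\}$ is a map of $\square$--diagrams from the blocks $X_\sigma$ of Lemma \ref{cubicalpushout} to the diagram $\sigma\mapsto BGL_r(F[t_\sigma])$, before stabilization and plus construction. Taking homotopy colimits gives the classifying map $\xi:X\rar BGL_r(F)_{{\rm def},2}$, where the target is the finite--rank, unstabilized analogue of $X_2$. Over this target we build a universal connection $\nabla^{\rm univ}$ by exactly the recipe of \S\ref{kdef}: flat on the vertex copies, slice--flat with no $dt$--components on the edge cylinders and on the face piece. Since $\nabla^{\rm def}$ is constructed from the slice representations, its Cheeger--Simons character is natural. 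Concretely, realize $\xi$ by smooth maps on a simplicial approximation; then $\nabla^{\rm def}$ is gauge equivalent to $\xi^\ast\nabla^{\rm univ}$ on every block, compatibly on the collars. The class $\what{c}_p^{\rm def}$ is by definition also pulled back along $\xi$, followed by stabilization. It therefore suffices to prove the equality on the universal space. Passing to $\varinjlim_r$ and to the plus construction changes nothing on the relevant cohomology in the stable range. For this we use homological stability \cite{vdK} (taking $r$ large relative to $p$), together with the fact that both classes are compatible with the stabilization maps by additivity.

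\emph{Step 2: detection on vertices.} By Lemma \ref{cubicalMVdetails}, every piece and every intersection in the cubical decomposition of $X_2$ has the homology of $BGL(F)^+$ through its evaluation maps. Consequently the restriction $H^{2p-1}(X_2,\comx/\Z)\rar H^{2p-1}(BGL(F)^+,\comx/\Z)$ to any one vertex copy is an isomorphism. It remains to show that the two classes agree after restriction to a single vertex, say the $(1,1)$ copy.

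\emph{Step 3: vertex comparison.} On the vertex copy $BGL(F)$, the connection $\nabla^{\rm univ}$ is the flat connection of the tautological representation. Its Cheeger--Simons class is therefore the universal Chern--Simons/regulator class $\what{c}_p\in H^{2p-1}(BGL(F),\comx/\Z)$. By definition, $\what{c}_p^{\rm def}$ restricts to the same universal regulator class under the identification of Lemma \ref{cubicaldefisom}, and that identification is induced by this very vertex inclusion.

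The main obstacle is Step 1, specifically keeping Cheeger--Simons characters well defined on the non--manifold universal space. There are two options. The first is to work with a filtered family of finite subcomplexes realized as smooth manifolds with corners, such as skeleta of simplicial models of $BGL_r(F[t_\sigma])$ thickened in Euclidean space. The second is to bypass the issue by restricting to the image of $X$ directly: apply Step 2's detection argument to $X$ itself via $\xi^\ast$, and check on the vertex blocks $Y_\sigma$ that the two classes agree on each flat piece. The first route is the one I would try first, since it mirrors \cite{IS}. The remaining subtlety is the $\lim^1$ term in passing to the colimit over skeleta, which vanishes for finite--dimensional $X$.
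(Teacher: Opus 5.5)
Your proposal is correct and follows essentially the paper's route: reduce by naturality to a finite--rank $\mathcal{C}^{\infty}$ model of the universal cubical space, stabilizing to rank $N\geq r$ via additivity and homological stability (since $r$ itself is fixed, it is $N$, not $r$, that must be large relative to $p$). Then use the cubical Mayer--Vietoris computation of Lemma \ref{cubicalMVdetails} to see that a degree $2p-1$ class there is detected on a single vertex piece, and identify both restrictions on a vertex with the regulator class of the tautological flat bundle. Your fallback of running the vertex--detection argument on $X$ itself would not work, because the vertex blocks of $X$ do not detect $H^{2p-1}(X,\comx/\Z)$; detection holds only on the universal space, where all structure maps are homology equivalences. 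Your primary route does not need that fallback.
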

\begin{proof}
\emph{(1) Finite stages and naturality.} $X$ is a finite CW complex and the
datum takes values in $GL_r$. For $N\geq r$ let
$X_2^{(N)}$ denote the cubical space of Lemma \ref{cubicalMVdetails} built
from $BGL_N$ instead of $BGL$, and $X_2^{(\infty)}=X_2$ their colimit. The
datum $\{\eta_{\sigma}\}$ defines, by the cubical decomposition of $X$
(Lemma \ref{cubicalpushout}) and functoriality of homotopy colimits, a map
$\xi_{\rho}^{(N)}: X\rar X_2^{(N)}$ for every $N\geq r$, compatible with
stabilization, and $\what{c}_p^{\rm def}$ is by definition the pullback
under $\xi_{\rho}=\xi^{(\infty)}_{\rho}$ of the universal class. Both
classes to be compared are natural: for
$\what{c}_p^{\rm def}$ this is the definition, and for
$\what{c}_p(\nabla^{\rm def})$ it follows from Lemma \ref{biinv} (any two
connections compatible with the same localized collection give the same
class) together with the fact that $\nabla^{\rm def}$ restricted to each
block or collar of $X$ is the pullback, under the block's classifying map,
of the corresponding tautological flat family.

\emph{(2) Reduction to the universal space.} By (1) it suffices to prove
the equality of the two universal classes on $X_2^{(N)}$ for $N$ large,
in cohomological degree $2p-1$: precisely, on a $\mathcal{C}^{\infty}$
model of the finite stage (replace each $BGL_N(F[t_{\sigma}])^+$ by the
realization of the simplicial manifold $N_{\bullet}GL_N$ of the discrete
group, on which the tautological flat family carries the connection
$\nabla^{\rm def}$ of Lemma \ref{triangulardef} cube by cube --- the same
construction as on $X$, applied to the tautological datum). Naturality
under $\xi^{(N)}_{\rho}$ then pulls the universal identity back to $X$.
By homological stability for general linear groups over an infinite field
\cite{vdK}, $H_n(GL_N(F'))\rar H_n(GL(F'))$ is an isomorphism for
$N\gg n$, uniformly over the finitely many rings
$F'=F,F[t_i],F[t_1,t_2]$ appearing (each is a ring with many units); so in
the fixed degree $2p-1$ the Mayer--Vietoris computation of Lemma
\ref{cubicalMVdetails} holds verbatim on the finite stage $X_2^{(N)}$ for
$N$ large: a class in $H^{2p-1}(X_2^{(N)},\comx/\Z)$ is determined by its
restriction to one (hence each) vertex piece.

\emph{(3) Vertex identification.} On the vertex piece indexed by
$(i,j)\in\{0,1\}^2$, the restriction of the universal
$\what{c}^{\rm def}_p$ is the universal regulator class of the tautological
representation $\eta_{ij}$ (evaluation of the classifying map at the
vertex), while the restriction of $\what{c}_p(\nabla^{\rm def})$ is the
Cheeger--Simons class of the tautological \emph{flat} connection of
$\eta_{ij}$ (the connection $\nabla^{\rm def}$ is flat over the vertex
blocks). These agree: for flat bundles the Cheeger--Simons class coincides
with the universal secondary (regulator) class, by the comparison of
Beilinson/Esnault/Cheeger--Simons theories recalled in \cite[\S 2]{IS}.
Hence the two universal classes agree on each vertex piece, and by (2) they
are equal on $X_2^{(N)}$; by (1) their pullbacks to $X$ agree.
\end{proof}


\end{document}